     \def\section{\@startsection{section}{1}%
     \z@{.7\linespacing\@plus\linespacing}{.5\linespacing}%
     {\bfseries
     \centering
     }}
     \def\@secnumfont{\bfseries}
\theoremstyle{plain}
\newtheorem{theorem}{Theorem}[section]
\newtheorem{lemma}[theorem]{Lemma}
\newtheorem{prop}[theorem]{Proposition}
\newtheorem{corollary}[theorem]{Corollary}
\theoremstyle{definition}
\newtheorem{remark}[theorem]{Remark}
\newtheorem{example}[theorem]{Example} 
\numberwithin{equation}{section}
\newcommand{\re}{\operatorname {Re}}
\newcommand{\im}{\operatorname {Im}}
\newcommand{\mcd}{{\mathcal D}}
\newcommand{\mcz}{{\mathcal Z}}
\newcommand{\mch}{{\mathcal H}}
\newcommand{\mbr}{{\mathbb R}}
\newcommand{\mbc}{{\mathbb C}}
\newcommand{\mcp}{{\mathcal P}}
\newcommand{\mbmn}{{M \cdot M}}
\newcommand{\mbx}{{X \cdot X}}
\newcommand{\mbq}{{\mathbb Q}}
\newcommand{\mbz}{{\mathbb Z}}
\newcommand{\ovs}{\overline{\sigma}}
\newcommand{\rank}{\operatorname {rank}}
\newcommand{\Ort}{\operatorname {O}}
\begin{document}

\title[Rotational Symmetries in Polynomial Rings]{Rotational Symmetries in Polynomial Rings}

 \author{Keith Conrad}
 \address{Department of Mathematics \\
  University  of Connecticut\\
Storrs, CT 062569\\
e-mail: \sl kconrad@math.uconn.edu} 

\author{Ambar N.~Sengupta}
\address{Department of Mathematics \\
  University  of Connecticut\\
Storrs, CT 062569\\
e-mail: \sl ambarnsg@gmail.com} 



\dedicatory{}

\begin{abstract}  
We  obtain results describing the behavior of the action of rotation generators on  polynomials over a commutative ring.    We also explore harmonic polynomials in a purely algebraic setting.
\end{abstract}

\maketitle
  
 \section{Introduction and summary of results}
 
 The purpose of this paper is to study the action of rotation generators on a polynomial ring. We establish numerous algebraic results, many of which have classical analytic counterparts for functions on Euclidean space.

 Let $R$ be a ring  (always assumed commutative with identity) and consider the polynomial ring $\mcp_N= R[X_1,\ldots, X_N]$ with $N \geq 2$.  Classically $R$ is  $\mbr$ or $\mbc$. For us, $R$ may be a ring of polynomials over further indeterminates with coefficients in some commutative ring.  Thus, for our purposes, it is necessary to build the framework of results with $R$ being a ring rather than a field. Other examples  for $R$ of interest include the ring of $p$-adic integers and the ring of formal power series over some integral domain.

 By a {\em rotation generator} we mean an operator on $\mcp_N$ of the form
\begin{equation}\label{E:defMjk}
M_{jk}=X_j\partial_k - X_k\partial_j,
\end{equation}
where $j, k\in\{1,\ldots, N\}$ with $j \not= k$ and $\partial_j = \partial/\partial X_j$. 
The $R$-linear span of these operators is a Lie algebra over $R$. 
The skew-symmetry 
$$
M_{jk}=-M_{kj},
$$
implies $M_{jk}^2$ is independent of the ordering of $j$ and $k$.

We also work with the Laplacian operator
\begin{equation}\label{E:defDeltaN}
\Delta_N=\partial_1^2+\cdots +\partial_N^2
\end{equation}
and sometimes we will drop the subscript $N$ from $\Delta_N$.
The ``quadratic Casimir'' operator  ${\mbmn}$ is given  by
\begin{equation}\label{E:MdotM}
\mbmn  =\sum_{\{j,k\}\in P_2(N)}M_{jk}^2,
\end{equation}
where $P_2(N)$ is the set of all $2$-element subsets of $\{1,\ldots, N\}$. 
Lengthy but straightforward computations produce the following commutator relations: on $\mcp_N$, 
\begin{equation}\label{E:MjkM2comm}
\begin{split}
 [M_{jk}, M_{lm}]&=0 \qquad\hbox{if $\{j,k\}, \{l,m\} \in P_2(N)$ are disjoint,}\\
 [M_{jk}, M_{kl}] &=M_{jl} \qquad\hbox{\!\!\!\!\!\!\!\!\! if $\{j,k\}, \{k,l\} \in P_2(N)$ with $j \not= l$ },\\
 [M_{jk}, \mbmn]&=0 \qquad\hbox{if $\{j,k\} \in P_2(N)$}.
\end{split}
\end{equation}

\subsection{Summary of results}\label{ss:sumr}
We present a sample of our results, indicating their overall narrative role in the development.  
The version of the results stated below 
may have a weaker hypothesis on $R$ when it is proved later.

The following result, which is established later (with slightly weaker hypotheses on $R$) 
as Propositions \ref{P:Mxyp0N} and \ref{P:Mquad2}, 
is the algebraic counterpart of the geometric result that if a function on $\mbr^N$ is invariant under all rotations around the origin then it is a function of the Euclidean norm. 

\begin{prop}\label{P:Mquad} 
Suppose the ring $R$ contains $\mbq$.
A polynomial $p\in R[X_1,\ldots, X_N]$ is annihilated by all the rotation generators 
$M_{jk}$ if and only if $p$ is a polynomial in ${\mbx} = X_1^2+\cdots +X_N^2$ with $R$-coefficients. 
An $R$-linear derivation on $R[X_1,\ldots, X_N]$   annihilates ${\mbx}$ if and only if it is an $R[X_1,\ldots,X_N]$-linear combination 
of the operators $M_{jk}$.
\end{prop}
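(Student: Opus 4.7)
I will prove the two statements in turn. Both backward directions are immediate: each $M_{jk}$ satisfies $M_{jk}(\mbx)=2X_jX_k-2X_kX_j=0$, so by the derivation property $M_{jk}$ kills every polynomial in $\mbx$; and for any $c_{jk}\in R[X_1,\ldots,X_N]$ the operator $c_{jk}M_{jk}$ is again a derivation annihilating $\mbx$, so any $R[X_1,\ldots,X_N]$-linear combination of the $M_{jk}$ does too.

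For the forward direction of the first statement I induct on $N$. The base case $N=2$ is a direct coefficient calculation: writing $p=\sum c_{ij}X_1^iX_2^j$ in the equation $M_{12}p=0$ yields the recursion $(b+1)c_{a-1,b+1}=(a+1)c_{a+1,b-1}$ for $a,b\geq 1$ together with the boundary conditions $c_{1,k}=c_{k,1}=0$ for all $k$; since $\mbq\subseteq R$, solving along the diagonal chains forces each homogeneous component of degree $d$ to equal $c_{0,d}(X_1^2+X_2^2)^{d/2}$ when $d$ is even and to vanish when $d$ is odd. For the inductive step I view $p\in R[X_1,\ldots,X_N]$ as a polynomial in $X_2,\ldots,X_N$ over $R[X_1]$ and apply the inductive hypothesis (with coefficient ring $R[X_1]\supseteq\mbq$) to the generators $M_{jk}$ with $j,k\geq 2$, obtaining $p=\sum_{k\geq 0}f_k(X_1)S^k$ with $S=X_2^2+\cdots+X_N^2$ and $f_k\in R[X_1]$. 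Plugging into $M_{12}p=0$, using $M_{12}(X_1)=-X_2$ and $M_{12}(S)=2X_1X_2$, gives
\[
X_2\sum_{k\geq 0}\bigl(2(k+1)X_1f_{k+1}(X_1)-f_k'(X_1)\bigr)S^k=0,
\]
and $R[X_1]$-linear independence of the $S^k$ (which have distinct $X_2$-degrees) forces the recursion $2(k+1)X_1f_{k+1}=f_k'$ in $R[X_1]$ for every $k$. The main obstacle is the integrality step: requiring every $f_k$ to actually lie in $R[X_1]$ is exactly what forces $f_0$ to contain only even powers of $X_1$ (the iterative elimination of odd-power coefficients uses invertibility of $2,3,5,\ldots$, which is why $\mbq\subseteq R$ is needed). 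Writing $f_0(X_1)=g(X_1^2)$, an easy induction gives $f_k(X_1)=g^{(k)}(X_1^2)/k!$, and the polynomial Taylor expansion recovers $p=g(X_1^2+S)=g(\mbx)$. The remaining equations $M_{1l}p=0$ for $l>2$ give the same recursion by symmetry and impose no further constraint.

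For the second statement, every $R$-linear derivation $D$ on $R[X_1,\ldots,X_N]$ is determined by the values $p_l:=D(X_l)\in R[X_1,\ldots,X_N]$ via $D=\sum_lp_l\partial_l$, and $D(\mbx)=2\sum_lX_lp_l$, so $D\mbx=0$ iff $\sum_lX_lp_l=0$. Matching coefficients of $\partial_l$ identifies the $R[X_1,\ldots,X_N]$-combinations of the $M_{jk}$ with the submodule of $R[X_1,\ldots,X_N]^N$ generated by the Koszul syzygies $V_{jk}:=X_je_k-X_ke_j$, so the claim reduces to: every syzygy $(p_1,\ldots,p_N)$ of $(X_1,\ldots,X_N)$ in $R[X_1,\ldots,X_N]$ is an $R[X_1,\ldots,X_N]$-combination of the $V_{jk}$. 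I prove this by induction on $N$, with base case $N=2$ following from $X_1,X_2$ being a regular sequence. For the inductive step, reducing $\sum X_ip_i=0$ modulo $X_N$ gives an $(N-1)$-variable syzygy among $\bar p_1,\ldots,\bar p_{N-1}$; the inductive hypothesis writes it as a Koszul combination, which I lift and subtract off to assume $X_N\mid p_i$ for $i<N$. Writing $p_i=X_Nr_i$ and using that $X_N$ is a non-zero-divisor in $R[X_1,\ldots,X_N]$ forces $p_N=-\sum_{i<N}X_ir_i$, and then $(p_1,\ldots,p_N)=\sum_{i<N}r_iV_{Ni}$ is manifestly Koszul. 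This part needs no hypothesis on $R$ beyond commutativity.
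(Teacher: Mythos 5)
Your proof is correct, but the first half takes a genuinely different route from the paper's. The paper (Proposition \ref{P:Mxyp0N}) reduces to homogeneous $p$ of degree $d$ and inducts on $d$: from $M_{jk}p=0$ for all $j,k$ it gets $(\mbmn)p=0$, feeds this into the Casimir identity $(\mbx)\Delta_N=(r\partial_r)^2+(N-2)r\partial_r+\mbmn$ to obtain $d(d+N-2)p=(\mbx)\Delta p$, extracts a factor of $\mbx$ by a coefficient-chasing argument, and iterates. You instead induct on the number of variables $N$, using the partial-symmetry statement (the paper's Remark \ref{rk1}) to write $p=\sum_k f_k(X_1)S^k$ and then resolving the single remaining equation $M_{12}p=0$ as a first-order recursion on the $f_k$, with the Taylor formula reassembling $g(X_1^2+S)=g(\mbx)$. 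Your route is more elementary (no Laplacian, no Casimir identity) and delivers the non-homogeneous statement directly, at the cost of leaning harder on $\mbq\subset R$: the divisions by $2(k+1)$ and by $k!$, and the elimination of the odd coefficients of $f_0$, genuinely use invertibility of integers, whereas the paper's argument goes through under the weaker hypothesis that positive integer multiples of $1_R$ are not zero divisors, and it also hands you the homogeneous refinement ($d$ even and $p=c(\mbx)^{d/2}$) for free. Your proof of the second statement is essentially the paper's proof of Proposition \ref{P:Mquad2}: both reduce to showing that every syzygy of $(X_1,\ldots,X_N)$ is a combination of the Koszul syzygies $X_je_k-X_ke_j$, and both do so by inducting on $N$ via reduction modulo $X_N$ (the paper phrases this as splitting off the terms not involving $X_N$). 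One small caveat on your closing remark: the syzygy claim itself needs no hypothesis on $R$, but passing from $D(\mbx)=0$ to $\sum_l X_lp_l=0$ cancels a factor of $2$, so that step does need $2$ to be a non-zero-divisor (which is exactly the hypothesis the paper imposes in Proposition \ref{P:Mquad2}, and is automatic here since $\mbq\subset R$).
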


For a more general picture, let $D_A$ be the set of all derivations on an $R$-algebra $A$; then $D_A$ is a Lie algebra over $A$. If $D$ is any Lie subalgebra of $D_A$ then the set 
$$N(D)\stackrel{\rm def}{=}\bigcap_{L\in D}\ker L$$
of common zeros of all $L\in D$ 
 is a subalgebra of $A$. Conversely for any subalgebra $B\subset A$ the annihilator 
 $${\mathcal A}(B)\stackrel{\rm def}{=}\{L\in D_A: B\subset \ker L\}$$
  is a Lie subalgebra of $D_A$. In this framework, Proposition \ref{P:Mquad}  is the case where $A=R[X_1,\ldots, X_N]$, and  
  $B = R[\mbx]$ is the subalgebra generated over $R$ by the quadratic form ${\mbx}$: it says 
\begin{quote}{ \em The  derivations on $R[X_1,\ldots, X_N]$ that annihilate  ${\mbx}$ form the Lie algebra  spanned by all 
the operators $M_{jk}$  with coefficients in $R[X_1,\ldots,X_N]$, 
and every polynomial in $R[X_1,\ldots,X_N]$ annihilated by that Lie algebra is in the subalgebra $B$.}
\end{quote}

We summarize the major points of the paper, mentioning some analytic counterparts of the algebraic results.

\begin{enumerate}
    \item We use a purely algebraic approach, working mostly with a polynomial ring $R[X_1,\ldots, X_N]$ and we assume 
    that $N\geq 2$. A few results need $N>2$.  Whether results hold for $N = 1$ can be checked by the reader in each case. 
    \item {\bf Summary of notation}. 
    For simplicity we will assume in this section that $R$ is a ring containing $\mbq$, such as any field of characteristic 0 or a polynomial ring over $\mbq$.  
    The $R$-algebra $\mcp_N=R[X_1,\ldots, X_N]$ is the direct sum 
    \begin{equation}\label{PPsum}
    \mcp_N=\bigoplus_{d\in\mbz}\mcp_{N,d},
    \end{equation}
    where $\mcp_{N,d}$ is the $R$-submodule of $\mcp_N$ 
    consisting of all homogeneous degree-$d$ polynomials together with $0$. Set $\mcp_{N,d} = \{0\}$ if $d<0$. The Laplacian operator 
    on $\mcp_N$ is 
    $$\Delta_N=\sum_{j=1}^N\partial_j^2:\mcp_{N}\to\mcp_{N-2},$$
    and the restriction of $\Delta_N$ to $\mcp_{N,d}$ is $\Delta_{N,d}:\mcp_{N,d}\to\mcp_{N,d-2}$. A polynomial is {\em harmonic} if it is in $\ker\Delta_N$;
    the space  of harmonic polynomials is 
    $$\mch_N=\ker\Delta_N=\bigoplus_{d\in\mbz}\mch_{N,d},$$
    where $\mch_{N,d}=\ker\Delta_{N,d}$. We work with the {\em  rotation generators}
    $$M_{jk}=X_j\partial_k-X_k\partial_j,$$
    which map $\mcp_{N,d}$ to $\mcp_{N,d}$, and the {\em quadratic Casimir}
    $${\mbmn}=\sum_{\{j,k\}\in P_2(N)}M_{jk}^2,$$
    where $P_2(N)$ is the $2$-element subsets $\{j,k\}$ of $\{1,2,\ldots, N\}$.
    Lastly, 
    $${\mbx}=X_1^2+\cdots+X_N^2,$$
    and for each $c\in R$ we set 
    $$\mcz_N(c)=\hbox{ideal in $R[X_1,\ldots,X_N]$ generated by $\mbx-c$.}$$
    
    \item (Prop. \ref{P:laplace}) We often use the following operator identity on $\mcp_N$: 
   $$
      (\mbx)\Delta_N= (r\partial_r)^2+(N-2)r\partial_r+ \mbmn,
    $$
    where we define the {\it Euler operator}  
    $$
    r\partial_r=\sum_{j=1}^NX_j\partial_j. 
    $$
 
    \item (Remark \ref{rk1}) {\em If $S$ is  a subset of $ \{1,\ldots, N\}$ with at least $2$ elements and 
   $p\in \mcp_N$ is in $\ker M_{jk}$ for all distinct $j, k\in S$, then $p$ is a polynomial in $\sum_{j\in S}X_j^2$ with coefficients in 
   $R[X_l; l \notin S]$}. Analytic counterpart: if a  function $p$ on $\mbr^n$ is invariant under all rotations in the coordinates $x_1,\ldots, x_k$, then $p$  is a function of $x_1^2+\cdots+x_k^2$ and the remaining coordinate variables.
    
    \item  (Prop. \ref{P:rotinvMjk}(iii)) {\em If a nonzero homogeneous polynomial $p\in\mcp_{N,d}$ is such that $M_{jk}p\in\mcz_N(c)$ for some $c \in R$ that is not a zero divisor and all distinct $j, k\in\{1,\ldots, N\}$ then $d$ is even and $p$ is an $R$-multiple of  $(\mbx)^{d/2}$.} Analytic counterpart: if a homogeneous function $p$ on $\mbr^N$ restricted to some sphere centered at the origin is rotation invariant then $p$ is a function of the norm on $\mbr^N$.  
        
  \item  (Prop. \ref{P:kerDelt2}) {\em The only harmonic polynomial in $\mcz_N(c)$ is $0$.} Analytic counterpart: a harmonic function on $\mbr^N$ that vanishes on a sphere centered at the origin is $0$.
    
    \item  (Prop. \ref{P:harmdecomp1})  {\em Every polynomial $p\in R[X_1,\ldots, X_N]$ can be expressed uniquely in the form
    $$p=p_0+(\mbx)p_1+\cdots +(\mbx)^sp_{s}$$
    where each $p_j$ is harmonic.  If $p\in\mcp_{N,d}$ $($homogeneous of degree $d$$)$ then each $p_j$ is also homogeneous, with $p_j\in\mch_{N,d-2j}$.}
    
      \item   (Prop. \ref{P:defLc})  {\em For each $c\in R$ and $p\in\mcp_N$ there is a {\em unique harmonic polynomial} $p_*\in\mch_N$ such that $p-p_*\in\mcz_N(c)$. } Analytic counterpart: On each sphere in $\mbr^N$ centered at the origin, every polynomial 
      in $\mbr[X_1,\ldots,X_N]$  is equal to some harmonic polynomial. 
      
      \item   (Prop. \ref{P:harmcons}) {\em Let $p\in \mcp_{N,d}$ be homogeneous of degree $d\geq 2$: 
        $$
p=p_0 + p_1X_N + \cdots + p_dX_N^d, 
 $$
where $p_0,\ldots, p_d\in R[X_1,\ldots, X_{N-1}]$. Then $p$
is harmonic if and only if the coefficients $p_{2}, p_{3},\ldots, p_d$ are determined by $p_0$ and $p_{1}$ through the relations}
\begin{equation}
\begin{split}
p_{2k} &=(-1)^{k} \frac{1}{(2k)!}\Delta^kp_0\\
p_{2k+1} &=(-1)^k\frac{1}{(2k+1)!}\Delta^{k}p_{1}
\end{split}
\end{equation} 
{\em for $k \geq 1$. This provides a dimension formula if} $R$ {\em is a field}: 
$$
    \dim_R \mch_{N,d}  =\binom{N+d-2}{N-2}+\binom{N+d-3}{N-2}. 
$$

\item  (Prop. \ref{P:commoneigen}) If $N=2n$ and $i = \sqrt{-1} \in R$ then 
common eigenvectors of  $M_{12}, M_{34}, \ldots, M_{2n-1, 2n}$, and $\mbmn$ in $R[X_1,\ldots,X_N]$ are of the form
$$Y_{a}q,$$
where 
$$Y_a=(X_1+i\varepsilon_1 X_2)^{|a_1|}\cdots (X_{2n-1}+i\varepsilon_{n} X_{2n})^{|a_n|},$$
with $a=(a_1,\ldots, a_n)\in\mbz^n$, $\varepsilon_j \in \{\pm 1\}$ the sign of $a_j$, and $q$ is a polynomial in $X_1^2+X_2^2$,  
$X_3^2+X_4^2,\ldots, X_{2n-1}^2+X_{2n}^2$, with coefficients in $R$, satisfying a certain differential equation; if $N=2$ then $q$ is just an element of $R$. If $N=2n+1$ then similar results are obtained except that $q$ has coefficients in $R[X_N]$.

\item  (Prop. \ref{P:Lc})  A {\em spherical harmonic} is a homogeneous polynomial that is harmonic. A {\em zonal harmonic} is a spherical harmonic that, modulo $\mcz_N(c)$, is a polynomial in $t\cdot X=t_1X_1+\cdots +t_NX_N$, where 
$t=(t_1,\ldots, t_N)\in R^N$. For simplicity suppose $c=1$ and $t\cdot t=1$ here.   {\em For each $q(Y)\in R[Y]$ the polynomial $q(t\cdot X)$ in $R[X_1,\ldots,X_n]/\mcz_N(1)$ is congruent to a spherical harmonic of degree $n$ if and only if $q$ satisfies the differential equation}
$$
  (1-Y^2)q''(Y) -(N-1)Yq'(Y) +n(n+N-2)q(Y) =0.
$$
This equation has, up to scaling by any nonzero element of $R$, a unique solution; this solution is of degree $n$, and 
it is an even polynomial if $n$ is even and it is an odd polynomial if $n$ is odd.

\item  (Prop. \ref{P:sphmean}, Corollary \ref{harmonic0})  We define a {\em  spherical mean} to be an $R$-linear map
$$\lambda:\mcp_N\to R$$
that vanishes on all polynomials of the form $M_{jk}p$ with $p\in\mcp_N$. {\em There exists a unique spherical mean $\lambda_0$ that is equal to $1$ on $(\mbx)^n$ for all integers $n\geq 0$.} Moreover, {\em $\lambda_0(p)=p(0)$ for every harmonic polynomial $p$ in 
$\mcp_N$.}
Analytic counterpart:
$$\lambda_0(p)=\int_{S^{N-1}(1)}p(x_1,\ldots, x_N)\,d\ovs(x),$$
where $\ovs$ is the normalized surface measure on the unit sphere $S^{N-1}(1)$ in $\mbr^N$.

\item  (Equation \ref{E:l0DX23}) {\em With $\lambda_0$ as in} (12) {\em and $p\in\mcp_{N,2n}$  for} $n \geq 1$, 
$$
\lambda_0(p) =  \frac{1}{n!2^n(2n+N-2)(2n+N-4) \ldots (2n+N-2n)} \Delta^{n} p,
$$
 {\em where on the right $\Delta^np$, being of degree $0$, is just an element of $R$}.
 Analytic counterpart:  for a homogeneous polynomial $p \colon \mbr^N \rightarrow \mbr$ of degree $2n \geq 2$, 
\begin{equation*}
    \begin{split}
\int_{S^{N-1}(1)}p(x_1,\ldots, x_N)\,d\ovs(x) &\\
&\hskip -1in= \frac{1}{n!2^n(2n+N-2)\ldots (2n+N-2n)} \Delta^{n} p,
\end{split}
\end{equation*}
where the terms multiplied in the denominator go down by 2 at each step. This is not a standard formula but may be verified by analytic methods. 
\item (Proposition \ref{orth-lambda}) The spherical mean $\lambda_0$ is invariant under rotations: if $p(X)\in R[X_1,\ldots, X_N]$, viewed as a `column vector', where $X=(X_1,\ldots, X_N)$, and $A$ is an $N\times N$ orthogonal matrix with entries in $R$, then $\lambda_0\bigl(p(XA)\bigr)=\lambda_0\bigl(p(X)\bigr)$. We also prove the identity (\ref{E:l0q}), which says that for any homogeneous polynomial $p$ of degree $d$:
\begin{equation}\label{E:l0qintro}
\lambda_0(X_1p)= \frac{1}{N+d-1}\lambda_0\left(\partial_1p\right).
\end{equation}

\item  (Proposition \ref{P:harmonicmeanchar}) If a polynomial $p$ satisfies  the mean-value property
 \begin{equation}\label{E:pmeaninto}
 \lambda_0\bigl(p(X+t)\bigr)=p(t),
 \end{equation}
 where $X=(X_1,\ldots, X_N)$ and $t=(t_1,\ldots, t_N)$ are $N$-tuples of   indeterminates, then $p(X)$ is harmonic.
\end{enumerate}

\subsection{Background and literature}  
A motivation for this work was to develop some of basic results concerning  spherical harmonics in a manner that demonstrates that their validity is purely algebraic and to explain why some of these special functions have 
rational coefficients. 

Algebraic studies of spherical harmonics seem to be largely in the physics literature, with a focus on  atomic physics; these works include Ogawa \cite{Ogawa2017}. Mathematical studies include   Axler and Ramey \cite{Axler1995} and Reznick  \cite{Rez1996}.  Spherical harmonics, originating in work of Legendre and later  Laplace's series expansion of the gravitational potential, are a classical subject; work on them from the 19th and early 20th century include Heine \cite{Heine1861}, Kellogg \cite{Kellogg1929},  Mehler \cite{Mehler1866},   Thomson and Tait \cites{Thom1863, TT1867}, and Whittaker and Watson \cite[section 18.31]{WW1902}. The relationship between special functions and representations of Lie groups is well-known (see, for example Vilenkin and Klimyk \cite{VK1995} and, closer in spirit to our context, Macdonald \cite{McD1995}). 

Our study focuses on polynomials over general rings, typically containing the rationals, rather than  complex-valued special functions that arise as matrix elements for representations of classical Lie groups. Instead of behavior under rotations and using integration over the sphere we use the Lie algebra of the rotation group and we devise purely algebraic counterparts of traditionally analytic notions such as an algebraic counterpart of integration over spheres (Section  \ref{s:sphmea}).

 \section{Polynomials and the action of rotation generators}\label{s:poly}

For a ring $R$, each $p$  in $R[X_1,\ldots, X_N]$ is written as 
 $$
 p= \sum_{{\vec{\jmath}\in\mbz^N}}p_{\vec{\jmath}}X^{\vec{\jmath}} = 
 \sum_{{\vec{\jmath}\in\mbz^N}}p_{\vec{\jmath}}X_1^{j_1}\ldots X_N^{j_N},
 $$
indexed by vectors $\vec{\jmath}= (j_1,\ldots, j_N)\in\mbz^N$, 
with $p_{\vec{\jmath}}=0$ if any component of $\vec{\jmath}$ is negative.
So $p$ is homogeneous of degree $d$ when the monomials $X_1^{j_1}\ldots X_{N}^{j_N}$ 
appearing in $p$ with nonzero coefficients have $j_1+\cdots +j_N =  d$.

  \begin{prop}\label{P:laplace} 
  With $\Delta_N$ being the Laplacian on $R[X_1,\ldots, X_N]$, 
    \begin{equation}\label{magicformula}
({\mbx})\Delta_{N} =  (r\partial_r)^2+(N-2)r\partial_r +{\mbmn}, 
\end{equation}
where   
$$
{\mbx}=X_1^2+\cdots +X_N^2 \ \ \ \text{ and } \ \ \ r\partial_r=\sum_{j=1}^NX_j\partial_j.
$$
\end{prop}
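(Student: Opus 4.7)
The plan is to prove the operator identity by directly expanding the Casimir $\mbmn$ in terms of the $X_j$ and $\partial_k$, then regrouping. Everything reduces to the canonical commutation $[\partial_a,X_b]=\delta_{ab}$ and careful bookkeeping with the indexing over $P_2(N)$.

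First I would compute $M_{jk}^2$ for a single pair $\{j,k\}$. Writing $M_{jk}=X_j\partial_k-X_k\partial_j$ and using $\partial_k X_k = X_k\partial_k + 1$ while $\partial_k X_j = X_j\partial_k$ for $j\neq k$, one finds
\begin{equation*}
M_{jk}^2 = X_j^2\partial_k^2 + X_k^2\partial_j^2 - 2X_jX_k\partial_j\partial_k - X_j\partial_j - X_k\partial_k.
\end{equation*}
This is the only non-trivial calculation; the rest is summation.

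Next I would sum over $\{j,k\}\in P_2(N)$, converting each sum over unordered pairs into a sum over ordered pairs with $j\neq k$. For the diagonal-like term, each $X_j\partial_j$ appears in exactly $N-1$ pairs, so $\sum_{\{j,k\}}(X_j\partial_j+X_k\partial_k)=(N-1)r\partial_r$. For the remaining three terms I would use the identities
\begin{equation*}
\sum_{j\neq k}X_j^2\partial_k^2 = (\mbx)\Delta_N - \sum_j X_j^2\partial_j^2,
\end{equation*}
and, after computing $(r\partial_r)^2=\sum_j X_j^2\partial_j^2 + r\partial_r + \sum_{j\neq k}X_jX_k\partial_j\partial_k$ (the diagonal $j=k$ contribution produces the extra $r\partial_r$),
\begin{equation*}
\sum_{j\neq k}X_jX_k\partial_j\partial_k = (r\partial_r)^2 - r\partial_r - \sum_j X_j^2\partial_j^2.
\end{equation*}

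Finally I would assemble these pieces. The $\sum_j X_j^2\partial_j^2$ contributions cancel, and one obtains
\begin{equation*}
\mbmn = (\mbx)\Delta_N - (r\partial_r)^2 + r\partial_r - (N-1)r\partial_r = (\mbx)\Delta_N - (r\partial_r)^2 - (N-2)r\partial_r,
\end{equation*}
which is the claimed identity after rearrangement. The main obstacle is purely bookkeeping: keeping track of the factor $2$ between sums over ordered and unordered pairs, and isolating the diagonal contribution to $(r\partial_r)^2$ so that the $\sum_j X_j^2\partial_j^2$ terms cancel cleanly.
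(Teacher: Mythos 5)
Your computation is correct, and it is precisely the ``straightforward but tedious computation'' that the paper leaves to the reader: the expansion of $M_{jk}^2$, the conversion between unordered and ordered pairs, and the isolation of the diagonal contribution to $(r\partial_r)^2$ all check out, and since the only input is the operator relation $\partial_k X_j = X_j\partial_k + \delta_{jk}$ the argument is valid over any commutative ring $R$. Nothing further is needed.
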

The  identity (\ref{magicformula}) is verified by a straightforward but tedious computation left to the reader. 
It will play a central role for us.

The  Euler operator  $r\partial_r$ on a degree-$n$ homogeneous polynomial $p$ in $R[X_1,\ldots,X_N]$ 
multiplies it by $n$: $(r\partial_r)(p) = np$. Just as the operator $M_{jk}$ arises in the traditional setting as a generator of rotations in the $X_j$-$X_k$ `plane', the Euler operator arises as a generator of scaling each $X_j$ by a common constant.

A simple computation shows that
\begin{equation}\label{E:Mjkmbx}
 M_{jk}(\mbx)= 0,
\end{equation}
and hence, using the derivation property of $M_{jk}$, we also have
\begin{equation}\label{E:Mjka}
 M_{jk}((\mbx)^a)= 0,
\end{equation}
for every integer $a\geq 0$. Consequently,
\begin{equation}\label{E:MdMXdX}
 (\mbmn)((\mbx)^a)= 0.
\end{equation}

We will make repeated use of the fact that $\mbx$, being a monic polynomial in $X_1$, is not a zero divisor. A more general form
of this observation is in  Lemma \ref{L:divide}.

\begin{prop}\label{P:DeltanormX}
With notation as before, for each positive integer $j$ 
\begin{equation}\label{E:DeltapowrX}
\Delta_N((\mbx)^{j})=2j (2j+N-2)({\mbx})^{j-1}. 
\end{equation}

\end{prop}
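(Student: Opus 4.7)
The plan is to deduce this from the ``magic formula'' (\ref{magicformula}) rather than computing $\partial_i^2((\mbx)^j)$ term-by-term, since the magic formula is the tool the paper is emphasizing and it makes the computation essentially trivial.

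First, I would apply the identity $(\mbx)\Delta_N = (r\partial_r)^2 + (N-2)r\partial_r + \mbmn$ to the polynomial $f = (\mbx)^j$. Since $(\mbx)^j$ is homogeneous of degree $2j$, the Euler operator $r\partial_r$ acts on it as multiplication by $2j$, so $(r\partial_r)^2 f = 4j^2 f$ and $(N-2)(r\partial_r) f = 2j(N-2) f$. Next, invoking the identity (\ref{E:MdMXdX}), namely $\mbmn((\mbx)^j) = 0$, the Casimir term drops out. Putting these together yields
\begin{equation*}
(\mbx)\,\Delta_N\bigl((\mbx)^j\bigr) = \bigl[4j^2 + 2j(N-2)\bigr](\mbx)^j = 2j(2j+N-2)(\mbx)^j.
\end{equation*}

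Finally, to obtain the stated formula I would cancel a factor of $\mbx$ from both sides. This is legitimate because $\mbx$ is a monic polynomial in $X_1$ (with the remaining $X_i^2$'s contributing to the constant term as elements of $R[X_2,\ldots,X_N]$), so it is not a zero divisor in $R[X_1,\ldots,X_N]$, as the paragraph preceding the proposition already points out (referencing Lemma \ref{L:divide}). Rewriting the right side as $\mbx \cdot 2j(2j+N-2)(\mbx)^{j-1}$ and cancelling gives $\Delta_N((\mbx)^j) = 2j(2j+N-2)(\mbx)^{j-1}$.

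There is no real obstacle here: the entire content of the proposition is packaged inside (\ref{magicformula}) combined with the vanishing $\mbmn((\mbx)^j)=0$. The only small point to be careful about is the cancellation of $\mbx$, which requires the non-zero-divisor observation and is why the proof should not be carried out in a quotient or in a ring where $\mbx$ could degenerate.
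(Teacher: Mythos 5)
Your proposal is correct and follows exactly the paper's own argument: apply the identity (\ref{magicformula}) to $(\mbx)^j$, use the Euler operator's action on degree-$2j$ homogeneous polynomials together with the annihilation property (\ref{E:MdMXdX}), and then cancel $\mbx$ using the fact that it is not a zero divisor. No gaps.
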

\begin{proof}
Applying (\ref{magicformula}) to  $(\mbx)^{j}$, and using the annihilation property (\ref{E:MdMXdX}), we have
$$
({\mbx})\Delta_N((\mbx)^{j})= (4j^2+(N-2)2j)(\mbx)^{j},
$$
and then (\ref{E:DeltapowrX}) follows because $\mbx$  is not a zero divisor.
\end{proof}

\subsection{Rotation-invariant polynomials}\label{ss:rotinv}
 
 Geometrically, it is clear that a function on $\mbr^N$  that is invariant under all rotations around the origin 
 is a function of the radial distance from the origin. The following result is an algebraic counterpart of this observation.
 
  \begin{prop}\label{P:Mxyp0N}
  Suppose that $R$ is a ring in which 
  positive integer multiples of the identity $1_R$ are not zero divisors, and $N$ is an integer $\geq 2$.
  A polynomial $p\in R[X_1,\ldots, X_N]$ satisfies $M_{jk}p=0$ for all distinct $j, k\in \{1,\ldots, N\}$ if and only if 
 $$p=q(X_1^2+\cdots+X_N^2),$$
 for some $q(T)\in R[T]$. If $p$ is homogeneous of degree $d$, then $d$ is even and $q(T)=cT^{d/2}$ for some $c\in R$.
 \end{prop}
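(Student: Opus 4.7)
My plan is to verify the easy direction directly and prove the hard direction by induction on $N$, regarding $p$ as a polynomial in $X_N$ and exploiting first the generators $M_{jk}$ with $j,k < N$ (inductive hypothesis) and then the single extra generator $M_{1N}$. The easy direction is immediate: if $p = q(\mbx)$ with $q \in R[T]$, then by $R$-linearity and the derivation property of each $M_{jk}$ together with (\ref{E:Mjka}), $M_{jk}p = 0$ for every distinct pair $j, k$.

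For the base case $N = 2$, I would first split $p$ into homogeneous parts (legitimate since $M_{12}$ preserves degree). Writing a homogeneous degree-$d$ piece as $p = \sum_{k=0}^{d}\gamma_k X_1^{d-k}X_2^k$ with $\gamma_k \in R$, matching coefficients of $M_{12}p = 0$ yields the recursion
$$
k\gamma_k = (d-k+2)\gamma_{k-2},\qquad 1 \leq k \leq d+1,
$$
with $\gamma_{-1} = \gamma_{d+1} = 0$. The two boundary conditions together with the hypothesis that positive integers are not zero divisors in $R$ force $\gamma_k = 0$ whenever $k$ and $d$ have opposite parity. For $d$ odd, propagation from both boundaries collapses everything to zero; for $d$ even, the same recursion is satisfied by the binomial coefficients of $(X_1^2 + X_2^2)^{d/2}$, and an induction on $l$ (cancelling the non-zero-divisor $2l$) applied to the difference $\delta_l := \gamma_{2l} - \binom{d/2}{l}\gamma_0$ gives $p = \gamma_0(X_1^2 + X_2^2)^{d/2}$.

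For the inductive step $N > 2$, I would write $p = \sum_n p_n X_N^n$ with $p_n \in R[X_1, \ldots, X_{N-1}]$. Since $M_{jk}$ with $j, k < N$ leaves $X_N$ alone, $M_{jk}p = 0$ yields $M_{jk}p_n = 0$ for every $n$, and the inductive hypothesis produces $q_n \in R[T]$ with $p_n = q_n(Y)$, where $Y = X_1^2 + \cdots + X_{N-1}^2$. Applying $M_{1N}$ to $p$ and using $\partial_1 Y = 2X_1$, one can factor the non-zero-divisor $X_1$ out of $M_{1N}p$; canceling it and matching coefficients of $X_N^m$ gives the recursion $(m+1)q_{m+1}(T) = 2q_{m-1}'(T)$ in $R[T]$ for $m \geq 0$ (with $q_{-1} := 0$). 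The zero-divisor hypothesis kills every $q_{2k+1}$, and an easy induction on $m$ gives $m!\,q_{2m} = q_0^{(m)}$. Since the $X_N^{2m}$-coefficient $r_m(Y)$ in the binomial expansion of $q_0(\mbx) = q_0(Y + X_N^2)$ satisfies $m!\,r_m = q_0^{(m)}$, I conclude $m!(r_m - q_{2m}) = 0$ and hence $r_m = q_{2m}$, so $p = q_0(\mbx)$. The homogeneous assertion then follows from uniqueness of the homogeneous decomposition of $\mcp_N$: $q(\mbx) \in \mcp_{N,d}$ forces $q(T) = cT^{d/2}$ (with $d$ even, or else $c = 0$). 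The principal obstacle throughout is the absence of $\mbq$ in $R$, which forces every "division by a positive integer $k$" to be replaced by the implication $k \cdot \xi = 0 \Rightarrow \xi = 0$, exactly what the non-zero-divisor hypothesis provides.
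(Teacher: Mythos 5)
Your proof is correct, but it takes a genuinely different route from the paper's. The paper keeps all $N$ variables on an equal footing: for homogeneous $p$ of degree $d\geq 2$ killed by every $M_{jk}$, it feeds $(\mbmn)p=0$ into the identity (\ref{magicformula}) to get $d(d+N-2)p=(\mbx)\Delta p$, then shows by an induction on the exponents that $\Delta p = d(d+N-2)F_*$ for some $F_*$ built from the coefficients of $p$, cancels the non-zero-divisor $d(d+N-2)$ to conclude $p=(\mbx)F_*$ with all $M_{jk}F_*=0$, and finally inducts on the degree. You instead induct on the number of variables, with an explicit two-variable coefficient recursion as the base case and, in the inductive step, a recursion among the $X_N$-coefficients extracted from the single extra generator $M_{1N}$ after factoring out $X_1$. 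Each approach has its merits: the paper's argument is uniform in the variables and ties directly into the Casimir/Laplacian machinery reused throughout the paper (harmonic decompositions, eigenvalues of $(\mbx)\Delta_{N,d}$), whereas yours is more elementary --- it never invokes (\ref{magicformula}) or the Laplacian --- and it implicitly proves a slightly sharper statement, namely that annihilation by the $M_{jk}$ with $j,k\leq N-1$ together with $M_{1N}$ alone already forces $p\in R[\mbx]$. Both proofs use the non-zero-divisor hypothesis in the same essential way, replacing division by a positive integer $k$ with the cancellation $k\xi=0\Rightarrow\xi=0$. One small point worth making explicit in your write-up: when you pass from the identity $(m+1)q_{m+1}(Y)=2q'_{m-1}(Y)$ in $R[X_1,\ldots,X_{N-1}]$ to the same identity in $R[T]$, you are using that the evaluation $T\mapsto Y=X_1^2+\cdots+X_{N-1}^2$ is injective on $R[T]$, which holds because the powers $Y^k$ are homogeneous of distinct degrees and monic in $X_1$.
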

 
The zero divisor condition on $R$ implies $R$ has characteristic 0 and 
the condition is satisfied if $R$ contains $\mbq$. A ring of characteristic 0 
not fitting the zero divisor condition is $\mbz[x]/(2x)$.

 \begin{proof} 
 As noted in (\ref{E:Mjka}), $M_{jk}((X_1^2 + \cdots + X_N^2)^a) = 0$ for all integers $a \geq 0$, so 
 each polynomial in $R[X_1^2 + \cdots + X_N^2]$ is annihilated by all $M_{jk}$.
 
For the converse, suppose $M_{jk}p = 0$ for all distinct $j$ and $k$. To prove $p$ is a polynomial in 
$\mbx = X_1^2 + \cdots + X_N^2$, we may assume that $p$ is homogeneous since 
each $M_{jk}$ maps homogeneous polynomials to 
homogeneous polynomials of the same degree.

Let $d = \deg p$.  The result is obvious if $d = 0$. 
If $d=1$ then $p$ is of the form $a_1X_1+\cdots +a_NX_N$, so  
$$
M_{jk}p=a_kX_j-a_jX_k.
$$
This being $0$ for all distinct $j, k\in\{1,\ldots, N\}$ implies $p$ is $0$, contradicting  $p$ having degree 1, so 
we can't have $d = 1$.
 
 Let $d \geq 2$. From all $M_{jk} p = 0$ we get $(\mbmn)p = 0$. 
Applying the identity (\ref{magicformula}) to $p$, 
 \begin{equation}
(\mbx){\Delta}p = (d^2 + (N-2)d)p+(\mbmn)p = d(d+N-2)p,
 \end{equation}
 so
 \begin{equation}\label{E:dpqr}
  d(d+N-2)p= (\mbx) F,
  \end{equation}
 where
  $$F={\Delta}p\in R[X_1,\ldots, X_N].$$
 
 Equating the coefficients of $X_1^{j_1}\ldots X_N^{j_N} $ on both sides of (\ref{E:dpqr}), 
 \begin{equation}\label{E:dprN}
 d(d+N-2)p_{j_1,\ldots, j_N}= F_{j_1-2,j_2,\ldots, j_N}+ F_{j_1,j_2-2,\ldots,j_N} + \cdots +F_{j_1,j_2,\ldots, j_N-2}
 \end{equation}
for all $j_1,\ldots,j_N \geq 0$, where a coefficient on the right side of (\ref{E:dprN}) is 0 if one of its indices is negative. 
 We will use induction on $|\vec{\jmath}|' \stackrel{\rm def}{=}j_2+\cdots+j_N$ ($j_1$ is not included in this sum) 
 to show each coefficient of $F_{\vec{j}}=F_{(j_1,\ldots, j_N)}$ is $d(d+N-2)$ times a linear combination of coefficients of $p$.

 If $|\vec{\jmath}|' =0$ then $\vec{\jmath} = (j_1,0,\ldots,0)$, so by 
 (\ref{E:dprN}), 
 $$
 F_{\vec{\jmath}} = F_{j_1,0,\ldots,0} = d(d+N-2)p_{j_1+2, 0,\ldots, 0}.
 $$
 If $|\vec{\jmath}|' =1$ then $\vec{\jmath} = (j_1,0,\ldots,1,\ldots,0)$, so by 
 (\ref{E:dprN}), 
 $$
 F_{\vec{\jmath}} = F_{j_1,0,\ldots,1,\ldots,0} = d(d+N-2)p_{j_1+2, 0,\ldots,1,\ldots, 0}.
 $$

 Suppose, inductively, that for some integer $n \geq 1$, 
 $F_{\vec{\jmath}}$ is $d(d+N-2)$ times a linear combination of 
 coefficients of $p$ whenever $|\vec{\jmath}|'< n$; we have just seen that this is true if $n=1$ or 2. 
 For an $N$-tuple $\vec{\jmath}$ where $|\vec{\jmath}|' = n$, replacing $j_1$ with $j_1+2$ in  (\ref{E:dprN}) gives us 
 $$
F_{\vec{\jmath}}= d(d+N-2)p_{j_1+2,j_2,\ldots, j_N}-\left(F_{j_1+2, j_2-2,\ldots, j_N}+\cdots +F_{j_1+2,j_2,\ldots, j_N-2}\right), 
$$
and the coefficients $F_{\vec{\imath}}$ on the right have $|\vec{\imath}|' = |\vec{\jmath}|' -2 < n$, so 
by induction $F_{\vec{\jmath}}$ is $d(d+N-2)$ times a linear combination of coefficients of $p$ when $|\vec{\jmath}|'=n$.

Thus the polynomial  $F$ itself is $d(d+N-2)$  times a polynomial $F_*\in R[X_1,\ldots, X_N]$ whose coefficients are linear combinations of the coefficients of $p$:
\begin{equation}
F=d(d+N-2)F_*.
\end{equation}
Substituting this into  (\ref{E:dpqr}), we have
\begin{equation}
d(d+N-2)\bigl(p-(\mbx)F_*\bigr) =0.
\end{equation}
Finally we use the zero divisor condition on $R$.  
If any coefficient in $p-(\mbx)F_*$ were nonzero then $d(d+N-2)1_R$ would be a zero divisor in $R$, 
which contradicts the zero divisor condition on $R$ since $d \geq 2$. 
Therefore 
\begin{equation}
p=(\mbx)F_*.
\end{equation}
Since $p$ is homogeneous of degree $d$, the polynomial $F_*$ is homogeneous of degree $d-2$, because   $\mbx$  is not a zero divisor.
Moreover, since $M_{jk}(\mbx) = M_{jk}(X_j^2+X_k^2)  = 0$, by the product rule for the first-order differential operator $M_{jk}$,
for each $g \in R[X_1,\ldots,X_N]$ we have 
   \begin{equation}\label{mjkxx}
   M_{jk}((\mbx)g)=M_{jk}({\mbx})g+(\mbx)M_{jk}g=(\mbx)M_{jk}g.
   \end{equation}
Thus 
\begin{equation}
M_{jk}p= M_{jk}((\mbx)F_*) = (\mbx)M_{jk}F_*.
\end{equation}
The left side is $0$ by hypothesis, so 
$(\mbx)M_{jk}F_* = 0$ for all $M_{jk}$. 
Since $\mbx$ is not a zero divisor in $R[X_1,\ldots, X_N]$, 
\begin{equation}
M_{jk}F_*=0
\end{equation}
for all $M_{jk}$.

To summarize, if $p$ is homogeneous of degree $d \geq 2$ with all $M_{jk}p$ being 0, then 
$p = (\mbx)F_*$ where 
$F_*$ is homogeneous of degree $d -2$ and all $M_{jk}F_*$ are 0.
By induction on the degree, $d-2$ is even and  $F_* = c(\mbx)^{(d-2)/2}$ for some $c\in R$. 
Thus $d$ is even and
$$p=c(\mbx)(\mbx)^{(d-2)/2} = c(X_1^2+\cdots+X_N^2)^{d/2}, 
$$
which completes the proof. 
\end{proof}

\begin{remark}\label{rk1}
If we replace $R$ by $R[Y_1,\ldots, Y_m]$ using new indeterminates $Y_1,\ldots, Y_m$, we get a result for polynomials with `partial' rotational symmetry: if 
$S$ is a subset of $\{1,\ldots, N\}$ containing at least $2$ elements and $M_{jk}p = 0$ for all 
distinct $j, k\in S$ then 
$p = q\left(\sum_{j\in S}X_j^2, \{X_l\}_{l\notin S}\right)$, where $q$ is a polynomial with  coefficients in $R$. 
Proposition \ref{P:Mxyp0N} is the case $S = \{1,\ldots,N\}$. 
\end{remark}

\begin{prop}\label{P:Mquad2} 
Suppose $2$ is not a zero divisor in $R$, and $N$ any positive integer. An $R$-linear derivation on $R[X_1,\ldots, X_N]$ annihilates $\mbx$ if and only if it is an  $R[X_1,\ldots,X_N]$-linear combination of the operators $M_{jk}$.
\end{prop}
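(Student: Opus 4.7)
The plan is to reduce the proposition to the classical first–syzygy statement for the sequence $X_1,\ldots,X_N$ in $\mcp_N=R[X_1,\ldots,X_N]$.

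The forward direction is immediate. By (\ref{E:Mjkmbx}), each $M_{jk}$ annihilates $\mbx$, and if $\delta$ is any $R$-linear derivation and $g\in\mcp_N$ then $g\delta$ (defined pointwise) is an $R$-linear derivation with $(g\delta)(\mbx)=g\,\delta(\mbx)$. Hence every $\mcp_N$-linear combination of the $M_{jk}$ is an $R$-linear derivation that annihilates $\mbx$.

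For the converse, let $D$ be an $R$-linear derivation on $\mcp_N$ with $D(\mbx)=0$. Since $D$ is determined by its values on the generators, writing $f_i=D(X_i)\in\mcp_N$ gives $D=\sum_{i=1}^N f_i\partial_i$. Then
$$D(\mbx)=\sum_i D(X_i^2)=2\sum_i X_i f_i,$$
so the hypothesis, together with the fact that $2$ is not a zero divisor in $R$ and hence, since $\mcp_N$ is a free $R$-module, not in $\mcp_N$, forces $\sum_{i=1}^N X_i f_i = 0$ in $\mcp_N$. I would then apply to $f_1,\ldots,f_N$ the following syzygy claim:
\begin{quote}
\emph{If $f_1,\ldots,f_N\in \mcp_N$ satisfy $\sum_i X_i f_i = 0$, then there exist $a_{ij}\in\mcp_N$ with $a_{ij}=-a_{ji}$ such that $f_i = \sum_{j=1}^N a_{ij} X_j$ for each $i$.}
\end{quote}
Granting this,
$$D=\sum_i f_i\partial_i=\sum_{i,j}a_{ij}X_j\partial_i=\sum_{i<j}a_{ij}(X_j\partial_i-X_i\partial_j)=\sum_{i<j}a_{ij}M_{ji},$$
exhibiting $D$ as an $\mcp_N$-linear combination of the rotation generators.

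The main obstacle is the syzygy claim itself, which is the exactness of the Koszul complex at degree one for the regular sequence $X_1,\ldots,X_N$. I would prove it by induction on $N$. For $N=1$ the relation $X_1 f_1=0$ gives $f_1=0$ (since $X_1$ is not a zero divisor in $\mcp_1$), and all $a_{ij}$ may be taken to be zero. For $N\geq 2$, reduce modulo $X_N$: the images $\bar f_i\in R[X_1,\ldots,X_{N-1}]$, $i<N$, satisfy $\sum_{i<N}X_i\bar f_i=0$, so by induction there exist antisymmetric $\bar a_{ij}\in R[X_1,\ldots,X_{N-1}]$ for $i,j<N$ with $\bar f_i=\sum_{j<N}\bar a_{ij}X_j$. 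Lift each $\bar a_{ij}$ to $a_{ij}\in\mcp_N$; then $f_i-\sum_{j<N}a_{ij}X_j$ vanishes modulo $X_N$, so equals $X_N b_i$ for some $b_i\in\mcp_N$, for each $i<N$. Substituting back into $\sum_i X_i f_i=0$, the double sum $\sum_{i,j<N}a_{ij}X_iX_j$ cancels by antisymmetry, leaving $X_N\bigl(f_N+\sum_{i<N}b_i X_i\bigr)=0$. Since $X_N$ is not a zero divisor, this gives $f_N=-\sum_{i<N}b_i X_i$, so setting $a_{Nj}=-b_j$, $a_{jN}=b_j$ for $j<N$, and $a_{NN}=0$ completes the required antisymmetric family.
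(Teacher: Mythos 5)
Your proposal is correct and follows essentially the same route as the paper: both reduce $L(\mbx)=0$ to the relation $\sum_i X_i f_i=0$ using that $2$ is not a zero divisor, and both resolve that relation by induction on $N$, peeling off the last variable (your reduction mod $X_N$ is the paper's splitting $a_j=a_j^o+b_jX_N$); you merely package the inductive step as the explicit first Koszul syzygy for $X_1,\ldots,X_N$ and convert the antisymmetric matrix $(a_{ij})$ into $\sum_{i<j}a_{ij}M_{ji}$ at the end. One tiny point to make explicit: the lifts $a_{ij}$ of $\bar a_{ij}$ must be chosen antisymmetrically in $\mcp_N$ (lift for $i<j$ and set $a_{ji}=-a_{ij}$, $a_{ii}=0$), since the cancellation of $\sum_{i,j<N}a_{ij}X_iX_j$ uses exact antisymmetry, not antisymmetry modulo $X_N$.
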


\begin{proof} 
We saw in (\ref{E:Mjkmbx})  that $\mbx$ is annihilated by every 
$M_{jk}$, so $\mbx$ is also annihilated by each $R[X_1,\ldots,X_N]$-linear combination of the $M_{jk}$'s. 
We turn now to proving the converse. 

Each $R$-linear derivation on $R[X_1,\ldots,X_N]$ is determined by its effect on $X_1,\ldots,X_N$, so 
 if $L$ is an $R$-linear derivation on $R[X_1,\ldots, X_N]$ then 
\begin{equation}\label{E:Lsum}
L=\sum_{j=1}^Na_j\partial_j
\end{equation}
where $a_j=L(X_j)\in R[X_1,\ldots, X_N]$:  both sides of (\ref{E:Lsum}) 
are $R$-linear derivations that agree on each $X_k$.  

The condition $L(\mbx) = 0$ is equivalent to 
\begin{equation}\label{Lmm}
2\sum_{j=1}^Na_jX_j = 0.
\end{equation} 
Since $2$ is assumed not to be a zero divisor in $R$, (\ref{Lmm}) is equivalent to 
\begin{equation}\label{E:adotX0}
a_1X_1 + \cdots + a_NX_N = 0.
\end{equation}

To deduce from (\ref{E:adotX0}) that $L$ in (\ref{E:Lsum}) is an $R[X_1,\ldots,X_N]$-linear combination of all $M_{jk}$, 
we induct on $N$. We will verify the cases $N=1$ and $N=2$ separately.

For $N=1$, the condition (\ref{E:adotX0}) implies that $a_1=0$, and so $L=0$, which is indeed a linear combination of $M_{11}=0$, trivially.

If $N=2$ then (\ref{E:adotX0}) says 
$$
a_1X_1+a_2X_2=0
$$
in $R[X_1,X_2]$.  Thus $a_1$ is a multiple of $X_2$: $a_1 = bX_2$ where $b \in R[X_1,X_2]$. 
Since $X_2$ is not a zero divisor in $R[X_1,X_2]$,  we get $a_2 = -bX_1$.  Therefore 
$$
L = a_1\partial_1+a_2\partial_2= bX_2\partial_1 - bX_1\partial_2 = bM_{21}, 
$$
which proves the case $N = 2$.

Now assume $N>2$ and the proposition holds for all polynomial rings over $R$ in fewer than $N$ indeterminates.   Write $a_j$ as $a_j^o + b_jX_N$ where the polynomial $a_j^o$ does {\it not} involve $X_N$. Extracting the terms on the left side of (\ref{E:adotX0}) that do not involve $X_N$, we have
\begin{equation}\label{E:adotX1}
a_1^oX_1+\cdots +a_{N-1}^oX_{N-1} = 0.
\end{equation}
Then, inductively, $a_1^o\partial_1+\cdots +a_{N-1}^o\partial_{N-1}$ is 
an $R[X_1,\ldots,X_{N-1}]$-linear 
combination of the operators $M_{jk}$ for distinct $j,k\in\{1,\ldots, N-1\}$. 
Since $M_{kj} = -M_{jk}$, we may take $j < k$: 
\begin{equation}\label{ELaj0N}
a_1^o\partial_1+\cdots +a^o_{N-1}\partial_{N-1}=\sum_{1 \leq j < k \leq N-1} c_{jk}M_{jk}
\end{equation}
for some polynomials $c_{j,k}\in R[X_1,\ldots, X_{N-1}]$. 

Subtracting (\ref{E:adotX1}) from (\ref{E:adotX0}), 
$$
(b_1X_N)X_1 +(b_2X_N)X_2+\cdots +(b_{N-1}X_N)X_{N-1} + a_NX_N=0,
$$
and hence
\begin{equation*}
b_1X_1+b_2X_2+\cdots +b_{N-1}X_{N-1} + a_N=0.
\end{equation*}
Thus the derivation  
$b_1X_N\partial_1+b_2X_N\partial_2+\cdots +b_{N-1}X_N\partial_{N-1} + a_N\partial_N$ is 
$$ 
b_1X_N\partial_1+b_2X_N\partial_2+\cdots +b_{N-1}X_N\partial_{N-1}  -(b_1X_1+b_2X_2+\cdots +b_{N-1}X_{N-1})\partial_N, 
$$
which equals 
$$
b_1(X_N\partial_1 - X_1\partial_N) + \cdots + b_{N-1}(X_N\partial_{N-1} - X_{N-1}\partial_N) = 
\sum_{i=1}^{N-1} b_iM_{Ni}.
$$
Adding this to (\ref{ELaj0N}), we get 
\begin{eqnarray*}
\sum_{j=1}^Na_j\partial_j & = & \sum_{j=1}^{N-1} (a_j^o + b_jX_N)\partial_j  + a_N\partial_N \\
& = & \sum_{j=1}^{N-1} a_j^o\partial_j +  (b_1X_N\partial_1 + \cdots + b_{N-1}X_N\partial_{N-1}  + a_N\partial_N) \\
& = & \sum_{1 \leq j < k \leq N-1} c_{jk}M_{jk}  + \sum_{i=1}^{N-1} b_iM_{Ni}. 
\end{eqnarray*}
\end{proof}

\subsection{Harmonic polynomials}\label{ss:harm}

A polynomial $p$ is {\em harmonic} if $\Delta p=0$; thus the harmonic polynomials in $R[X_1,\ldots,X_N]$ form the kernel of the Laplacian $\Delta_N$.  
Using the algebraic relationship between $\Delta_N$ and the operator of multiplication by $\mbx$, principally coming from 
(\ref{magicformula}), 
we will see how $R[X_1,\ldots,X_N]$ decomposes in terms of these two operators when $R$ contains $\mbq$.

In this section, as before, we use the notation $\mcp_{N,d}$ to denote the $R$-module of all polynomials in $R[X_1,\ldots, X_N]$ that are homogeneous of degree $d$. 
The restriction of $\Delta_N$ to $\mcp_{N,d}$ is denoted $\Delta_{N,d}$, and the $R$-module of 
all harmonic polynomials that are homogeneous of degree $d$ will be denoted
\begin{equation}\label{E:mchNd}
\mch_{N,d} = \ker(\Delta_{N,d})=\ker(\Delta_N|\mcp_{N,d}) .
\end{equation}
Let us note that
\begin{equation}\label{E:Deltagraded}
\Delta_N(\mcp_{N,d})\subset \mcp_{N, d-2}.
\end{equation}
Thus $\Delta_N$ carries homogeneous polynomials into homogeneous polynomials, and so if $p$ is harmonic then each homogeneous component of $p$ is harmonic.

The following simple property of the second order differential operator $\Delta_N$ will be used repeatedly:
\begin{equation}\label{E:Deltprod}
    \Delta_N(pq)=(\Delta_N p)q+2\sum_{j=1}^N(\partial_jp)(\partial_jq)+p\Delta_Nq,
\end{equation}
for all $p, q\in\mcp_N$.
For Proposition \ref{P:kerDelt1}   we will use the following observation. 

\begin{lemma}\label{L:DelraNXdtXmq}
Let $m$ be a non-negative integer and $q\in\mcp_{N,d}$. Then 
\begin{equation}\label{E:DeltapXq}
\Delta_N p = (2m(2m+N-2)+4md)(\mbx)^{m-1}q+(\mbx)^{m}\Delta_N q,
\end{equation}
where $p=(\mbx)^mq$, and the first term on the right side of (\ref{E:DeltapXq}) is taken to be $0$ when $m=0$.
\end{lemma}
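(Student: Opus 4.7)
The plan is to expand $\Delta_N p = \Delta_N\bigl((\mbx)^m q\bigr)$ using the Leibniz-type identity (\ref{E:Deltprod}) applied to the two factors $(\mbx)^m$ and $q$:
$$
\Delta_N\bigl((\mbx)^m q\bigr) = \bigl(\Delta_N (\mbx)^m\bigr)q + 2\sum_{j=1}^N \bigl(\partial_j(\mbx)^m\bigr)(\partial_j q) + (\mbx)^m\Delta_N q.
$$
The last term on the right is already the second term appearing in the desired identity, so everything reduces to evaluating the first two terms.

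For the first term, Proposition \ref{P:DeltanormX} directly gives
$$
\Delta_N\bigl((\mbx)^m\bigr) = 2m(2m+N-2)(\mbx)^{m-1},
$$
which when multiplied by $q$ supplies the $2m(2m+N-2)(\mbx)^{m-1}q$ contribution. For the middle term, compute $\partial_j(\mbx)^m = 2mX_j(\mbx)^{m-1}$, so
$$
2\sum_{j=1}^N \bigl(\partial_j(\mbx)^m\bigr)(\partial_j q) = 4m(\mbx)^{m-1}\sum_{j=1}^N X_j\partial_j q = 4m(\mbx)^{m-1}(r\partial_r q).
$$
Since $q\in\mcp_{N,d}$ is homogeneous of degree $d$, the Euler operator acts as $r\partial_r q = dq$, yielding $4md(\mbx)^{m-1}q$. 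Adding the two contributions gives the coefficient $2m(2m+N-2)+4md$ in front of $(\mbx)^{m-1}q$, which is the claimed identity.

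Finally, the case $m=0$ needs a one-line comment: when $m=0$ the prefactor $2m(2m+N-2)+4md$ vanishes, so the formal appearance of $(\mbx)^{-1}$ is harmless and the identity reduces to $\Delta_N q = \Delta_N q$, consistent with the convention that the first term on the right of (\ref{E:DeltapXq}) is $0$ when $m=0$. There is no substantive obstacle here; the only thing to watch is that the identity (\ref{E:Deltprod}) and Proposition \ref{P:DeltanormX} both hold over an arbitrary commutative ring, so no hypothesis on $R$ (such as containing $\mbq$) is needed for this lemma.
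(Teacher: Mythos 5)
Your proof is correct and follows essentially the same route as the paper: apply the product formula (\ref{E:Deltprod}) to $(\mbx)^m$ and $q$, evaluate $\Delta_N((\mbx)^m)$ via Proposition \ref{P:DeltanormX}, and use the Euler relation $\sum_j X_j\partial_j q = dq$ for the cross term. The extra remarks on the $m=0$ convention and the absence of hypotheses on $R$ are accurate but not needed beyond what the paper already states.
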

\begin{proof}
  Using the product formula (\ref{E:Deltprod}), we have
\begin{equation}
\begin{split}
 \Delta_N p  &=  \Delta_N\bigl((\mbx)^mq\bigr) \\
&=   \Delta_N((\mbx)^{m})q+ 2\sum_{j=1}^Nm(\mbx)^{m-1}2X_j\partial_jq+(\mbx)^{m}\Delta_N q.
\end{split}
\end{equation}  
The first term is 
$2m(2m+N-2)(\mbx)^{m-1}q$ by Proposition \ref{P:DeltanormX}. 
We have $\sum_{j=1}^N X_j\partial_jq = dq$ since $q$ is homogeneous, so 
the second term is $4md(\mbx)^{m-1}q$.  Hence we have (\ref{E:DeltapXq}). \end{proof}

Now we can prove that nonzero multiples of $\mbx$ cannot be harmonic.

\begin{prop}\label{P:kerDelt1}  
Suppose that in the ring $R$ positive integer multiples  of $1_R$ are not zero divisors.
The only harmonic multiple of $\mbx$ in $R[X_1,\ldots, X_N]$ is $0$.
\end{prop}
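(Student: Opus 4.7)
The plan is to reduce to the homogeneous case and then iterate the identity furnished by Lemma~\ref{L:DelraNXdtXmq}. Suppose $p=(\mbx)q$ with $\Delta_N p=0$, and decompose $q=\sum_{d\geq 0}q_d$ into homogeneous components, so that $p=\sum_d(\mbx)q_d$ with $(\mbx)q_d\in\mcp_{N,d+2}$. Since $\Delta_N$ maps $\mcp_{N,d+2}$ into $\mcp_{N,d}$, the relation $\Delta_N p=0$ splits into the separate conditions $\Delta_N((\mbx)q_d)=0$, one for each $d$. It therefore suffices to show that a homogeneous $q\in\mcp_{N,d}$ with $\Delta_N((\mbx)q)=0$ must vanish. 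Applying Lemma~\ref{L:DelraNXdtXmq} with $m=1$ to this equation gives the base identity
\[
(\mbx)\,\Delta_N q=-2(N+2d)\,q.
\]

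Next I iterate. Applying $\Delta_N$ to both sides and invoking Lemma~\ref{L:DelraNXdtXmq} again with $m=1$ (but with $\Delta_N q$, which is homogeneous of degree $d-2$, in place of $q$), a short induction on $k$ yields the recurrence
\[
(\mbx)\,\Delta_N^{k+1}q=-A_k\,\Delta_N^{k}q,\qquad A_k=2(k+1)(N+2d-2k),
\]
valid for every integer $k\geq 0$. The inductive step reduces to the bookkeeping identity $A_k+2(N+2d-4(k+1))=A_{k+1}$.

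To finish, I exploit that $\Delta_N^{k}q$ is homogeneous of degree $d-2k$ and hence is zero as soon as $2k>d$. Taking $K=\lfloor d/2\rfloor+1$, we have $\Delta_N^{K}q=0$, and the recurrence at $k=K-1$ then reads $A_{K-1}\,\Delta_N^{K-1}q=0$. For each $k$ with $0\leq k\leq K-1$ we have $2k\leq d$, so $N+2d-2k\geq N\geq 2$, making $A_k$ a strictly positive integer; by hypothesis, $A_k\cdot 1_R$ is then not a zero divisor in $R$, forcing $\Delta_N^{K-1}q=0$. Substituting this back into the recurrence at $k=K-2$ and iterating downward one index at a time gives $\Delta_N^{k}q=0$ for all $k\geq 0$; the case $k=0$ yields $q=0$, and hence $p=0$. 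The only step requiring any real care is the inductive verification of the explicit form of $A_k$; once that is in hand, the argument is a clean finite descent driven entirely by the non-zero-divisor hypothesis on $R$.
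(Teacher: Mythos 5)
Your proof is correct, but it runs along a genuinely different track from the paper's. The paper also reduces to the homogeneous case and invokes Lemma \ref{L:DelraNXdtXmq}, but only once: writing $p=(\mbx)^m q$ with $m\geq 1$, it derives $2m(2m+N-2+2d)q=-(\mbx)\Delta_N q$ and then uses the division algorithm (with $\mbx$ monic in $X_N$) to conclude that $\mbx\mid q$, hence $(\mbx)^{m+1}\mid p$; since $\deg((\mbx)^m h)=2m+\deg h$ shows no nonzero polynomial is arbitrarily highly divisible by $\mbx$, it follows that $p=0$. You instead stay at $m=1$ and iterate the Laplacian, producing the explicit recurrence $(\mbx)\,\Delta_N^{k+1}q=-A_k\,\Delta_N^{k}q$ with $A_k=2(k+1)(N+2d-2k)$ (your bookkeeping identity checks out), and then run a finite downward descent from the degree-forced vanishing $\Delta_N^{K}q=0$. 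What your route buys is self-containment: no division algorithm, no divisibility bookkeeping, just the non-zero-divisor hypothesis applied to the explicit positive integers $A_k$. What the paper's route buys is that its divisibility mechanism (division by the monic $\mbx$ or $\mbx-c$ in $X_N$) is reused verbatim in Lemma \ref{L:divide} and feeds into the inhomogeneous case $\mbx-c$ of Proposition \ref{P:kerDelt2}, whereas your homogeneous recurrence would not apply there directly (the paper handles $\mbx-c$ by a separate reduction to the top-degree part in any event). The only points needing care in your write-up — the validity of the recurrence when intermediate iterates are already zero, and the positivity of $A_k$ for $0\leq k\leq K-1$ — are both handled correctly.
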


\begin{proof}
Even though $R$ might not be an integral domain, for nonzero $h$ in $R[X_1,\ldots,X_N]$ we have
\begin{equation}\label{degXh}
\deg((\mbx)h) = 2+\deg h.
\end{equation}
Indeed, this is reduced to the case of homogeneous $h$ by writing $h$ as a sum of homogeneous parts,
and when $h$ is homogeneous every product of monomials in $(\mbx)h$ has the same degree. So 
the only way (\ref{degXh}) fails for homogeneous $h$ is if $(\mbx)h = 0$, which is 
impossible by thinking of $\mbx$ and $h$ as polynomials in $X_N$ since 
$\mbx$ is monic in $X_N$.

From (\ref{degXh}) we have $\deg((\mbx)^mh) = 2m +\deg h$ for all $m \geq 0$, 
so a nonzero polynomial in $R[X_1,\ldots,X_N]$ isn't arbitrarily highly divisible by $\mbx$.

Let $p$ be a harmonic multiple of $\mbx$ in $R[X_1,\ldots, X_N]$. To prove $p = 0$ we will show 
$p$ is arbitrarily highly divisible by $\mbx$: if $(\mbx)^{m} \mid p$ for some $m \geq 1$ 
then $(\mbx)^{m+1} \mid p$.

Let us recall that $\Delta_N$ maps homogeneous polynomials to homogeneous polynomials, and we note also
that any homogeneous component of a polynomial multiple of $\mbx$ is also a multiple of $\mbx$ since multiplication by $\mbx$ maps
any homogeneous polynomial to a homogeneous polynomial with degree raised by $2$.
We may then assume $p$ is homogeneous. 
Set $p = (\mbx)^{m}q$ with $m \geq 1$ and $q \in R[X_1,\ldots,X_N]$.  We may assume $q \not= 0$. 
The polynomial $q$ has to be homogeneous, because if it has nonzero homogeneous 
terms of different degrees then so does $p$ by (\ref{degXh}), which contradicts the homogeneity of $p$. 

As seen in (\ref{E:DeltapXq}),
\begin{equation}\label{E:DeltapXq2}
\Delta_N p = (2m(2m+N-2)+4md)(\mbx)^{m-1}q+(\mbx)^{m}\Delta_N q,
\end{equation}
where $q\in\mcp_{N,d}$.

Since $\Delta_Np = 0$ we have
\begin{equation}\label{E:2mdq}
    2m(2m+N-2+2d)q = -({\mbx})\Delta_N q.
    \end{equation}
We want to deduce from this that $(\mbx) \mid q$ in $R[X_1,\ldots,X_N]$, so 
$(\mbx)^{m+1} \mid p$ in $R[X_1,\ldots,X_N]$ and we'd be done.
 
 Since $\mbx$ is monic in $X_N$, we can use the division algorithm by monics in $R[X_1,\ldots,X_{N-1}][X_N]$   
to write $q = (\mbx)g + r$ where $r = 0$ or $\deg_{X_N}(r) \leq 1$. Multiplying both sides by
$2m(2m+N-2+2d)$ and using (\ref{E:2mdq}) we get $\mbx \mid 2m(2m+N-2+2d)r$, which 
for degree reasons implies $2m(2m+N-2+2d)r = 0$, so $r = 0$ from the zero divisor hypothesis about $R$.
\end{proof}

\begin{prop}\label{P:kerDelt2} 
Suppose that in the ring $R$ positive integer multiples  of $1_R$ are not zero divisors. 
For $c \in R$, the only harmonic multiple of $\mbx - c$ in $R[X_1,\ldots, X_N]$ is $0$.
\end{prop}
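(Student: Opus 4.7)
The plan is to reduce the case of general $c$ to the already-established case $c=0$ treated in Proposition \ref{P:kerDelt1}, by extracting a top-degree component.

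Suppose $p=(\mbx-c)h$ is harmonic. If $h=0$ we are done, so assume $h\neq 0$, aiming for a contradiction. I would decompose $h$ into its homogeneous parts,
$$h = h_0 + h_1 + \cdots + h_D,$$
with $h_D\neq 0$ of degree $D$, and then write
$$p = (\mbx-c)h = \sum_{d=0}^{D}\mbx\,h_d \;-\; c\sum_{d=0}^{D}h_d,$$
where $\mbx\,h_d$ is homogeneous of degree $d+2$ and $ch_d$ is homogeneous of degree $d$. The unique contribution to $p$ in degree $D+2$ is $\mbx\,h_D$, which is nonzero because $\mbx$, being monic in $X_N$, is not a zero divisor in $R[X_1,\ldots,X_N]$ (this observation is also used in the proof of Proposition \ref{P:kerDelt1}).

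Next I would invoke the fact, noted already in the proof of Proposition \ref{P:kerDelt1}, that $\Delta_N$ carries $\mcp_{N,d}$ into $\mcp_{N,d-2}$, so that $\Delta_N p = 0$ forces every homogeneous component of $p$ to be harmonic. Applied to the top-degree component, this says $\mbx\,h_D$ is a harmonic multiple of $\mbx$. Proposition \ref{P:kerDelt1}, whose hypothesis on $R$ is exactly the one assumed here, then yields $\mbx\,h_D=0$, contradicting the previous paragraph. Hence $h=0$ and therefore $p=0$.

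There is no substantive obstacle: the entire argument is a graded reduction to the case $c=0$. The only point to verify with any care is that the top-degree homogeneous component of $(\mbx-c)h$ is $\mbx\,h_D$ and is nonzero, and both claims follow from the facts that the $-c h_d$ contributions live in degrees strictly below $D+2$ and that $\mbx$ is not a zero divisor.
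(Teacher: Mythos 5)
Your proposal is correct and is essentially the paper's own proof: both arguments write the polynomial as $(\mbx-c)q$ with $q\neq 0$, isolate the top-degree homogeneous component $(\mbx)q_m$ (nonzero because $\mbx$ is monic in one of the indeterminates and hence not a zero divisor), use the gradedness of $\Delta_N$ to see that this component is a harmonic multiple of $\mbx$, and then apply Proposition \ref{P:kerDelt1} to reach a contradiction. No gaps.
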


This result with $R = \mbr$ is standard in analysis but the statement here is purely algebraic, and the proof will be as well. 

\begin{proof} Suppose $p\neq 0$ is harmonic and a multiple of $({\mbx}-c)$. Then we can write $p$ as
$$p=({\mbx}-c)q,$$
with $q \not= 0$.  Write $q$ as a sum of homogeneous terms:
$$q=q_0+q_1+\cdots +q_m,$$
with each $q_k\in R[X_1,\ldots, X_N]$   homogeneous of degree $k$, and $q_m\neq 0$. 
Therefore the highest-degree homogeneous term in $p$ is $(\mbx)q_m$, and 
$(\mbx)q_m$ is nonzero since $q_m \not= 0$ and $\mbx$ is not a zero divisor 
(for example, it is monic in $X_1$).
From looking at the effect of $\Delta_N$ on homogeneous parts of $(\mbx)q$, we get $\Delta_N((\mbx)q_m) = 0$.
By Proposition  \ref{P:kerDelt1} it follows that $(\mbx)q_m=0$, but we already explained why 
$(\mbx)q_m \not= 0$, so we have a contradiction. 
\end{proof}

\begin{lemma}\label{L:AB} Let $A \colon V \to W$ and $B \colon W\to V$ be linear mappings between finite-dimensional vector spaces over a field. Suppose also that 
$$\ker (A)\cap {\rm Im}(B)=\{0\} \qquad\hbox{and}\qquad \ker B=0.$$
 Then $A$ is surjective, $AB$ is bijective, and
\begin{equation}\label{E:Vd2dec}
V = \ker(A)\oplus  {\rm Im}(B).
\end{equation}
Moreover,
\begin{equation}\label{E:pAB}
v=(I-B(AB)^{-1}A)v + B(AB)^{-1}Av,
\end{equation}
for all $v\in V$, where the first term on the right is in $\ker A$.
\end{lemma}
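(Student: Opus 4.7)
The plan is to first prove that $AB \colon W \to W$ is bijective, from which the remaining claims follow easily.

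To show $AB$ is injective, suppose $AB(w) = 0$ for some $w \in W$. Then $B(w) \in {\rm Im}(B)$ while $A(B(w)) = 0$, so $B(w)$ lies in $\ker(A) \cap {\rm Im}(B) = \{0\}$ by the first hypothesis. Hence $B(w) = 0$, and the second hypothesis $\ker B = 0$ then forces $w = 0$. Since $AB$ is an injective linear operator from the finite-dimensional space $W$ to itself, it is bijective.

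The surjectivity of $A$ is now immediate: $W = {\rm Im}(AB) \subset {\rm Im}(A)$. For the direct sum decomposition (\ref{E:Vd2dec}), the condition $\ker(A) \cap {\rm Im}(B) = \{0\}$ is part of the hypothesis, so it suffices to show that $V = \ker(A) + {\rm Im}(B)$. Given any $v \in V$, set
$$v_1 = B(AB)^{-1}A(v) \in {\rm Im}(B),$$
which is well-defined since $AB$ is invertible. Then $A(v_1) = AB(AB)^{-1}A(v) = A(v)$, so $v - v_1 \in \ker(A)$. Writing $v = (v - v_1) + v_1$ simultaneously establishes both the direct sum $V = \ker(A) \oplus {\rm Im}(B)$ and the explicit formula (\ref{E:pAB}).

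The only subtle point is recognizing that the two hypotheses together deliver injectivity of $AB$; once that is in hand, finite-dimensionality of $W$ promotes injectivity to bijectivity, and the remaining steps are essentially bookkeeping.
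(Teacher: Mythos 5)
Your proof is correct and follows essentially the same route as the paper's: both establish injectivity of $AB$ from the two hypotheses, use finite-dimensionality to upgrade to bijectivity, and then read off the decomposition from the identity $v = (v - B(AB)^{-1}Av) + B(AB)^{-1}Av$. Your version just spells out the verification that the first term lies in $\ker(A)$ slightly more explicitly.
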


\begin{proof}  The kernel of $AB \colon W \to W$ is the set of all $w \in W$ for which  $Bw \in \ker A$, which means $Bw \in \ker(A)\cap {\rm Im}(B)=\{0\}$; since $\ker B=0$ it follows then that $\ker(AB)$ is also $0$.  Since $W$ is finite-dimensional over a field, we conclude that $AB$ is bijective, so $A$ is surjective.  In the identity (\ref{E:pAB}), the first term on the right side clearly lies in $\ker(A)$ and the second term is in ${\rm Im}(B)$. Hence $\ker(A)+{\rm Im}(B)=V$. This sum is direct because $\ker(A)\cap {\rm Im}(B)=\{0\}$.
\end{proof}

\begin{prop}\label{P:harmdecomp1}
Suppose that the ring $R$ contains $\mbq$. Then every polynomial $p\in R[X_1,\ldots, X_N]$ can be expressed uniquely in the form
\begin{equation}\label{E:pp0p1}
    p=p_0+(\mbx)p_1+\cdots + (\mbx)^{s}p_s,
    \end{equation}
    where $p_0,\ldots, p_s$ are harmonic polynomials. Moreover, if $p$ is homogeneous of degree $d$ then  $p_j$ is $0$ or 
    homogeneous  of degree $d-2j$ for each non-negative integer $j\leq d/2$.
\end{prop}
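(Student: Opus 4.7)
The plan is to prove existence by induction on degree (after reducing to the homogeneous case) and prove uniqueness separately using Proposition \ref{P:kerDelt1}. First I reduce to the homogeneous case: if each homogeneous component $p^{(d)}$ of $p$ admits the decomposition $p^{(d)} = \sum_{i} (\mbx)^i p_i^{(d)}$ with $p_i^{(d)} \in \mch_{N,d-2i}$, then regrouping by powers of $\mbx$ gives $p = \sum_i (\mbx)^i \bigl(\sum_d p_i^{(d)}\bigr)$ and the inner sum is harmonic as a sum of harmonics. The homogeneity assertion for homogeneous $p$ then comes for free from the construction.

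For existence in the homogeneous case, I would induct on $d$. For $d \leq 1$, $\Delta_N p \in \mcp_{N,d-2} = \{0\}$, so $p$ itself is harmonic and we set $p_0 = p$. For $d \geq 2$, I apply the inductive hypothesis to $\Delta_N p \in \mcp_{N,d-2}$ to write
$$\Delta_N p = \sum_{j=0}^{s'} (\mbx)^j q_j, \qquad q_j \in \mch_{N,d-2-2j}.$$
The key step is to construct $u \in \mcp_{N,d-2}$ with $\Delta_N\bigl((\mbx)u\bigr) = \Delta_N p$; setting $p_0 := p - (\mbx)u$ then gives a harmonic $p_0 \in \mch_{N,d}$, and applying induction to $u$ completes the decomposition of $p$. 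I would try the ansatz $u = \sum_j \alpha_j (\mbx)^j q_j$ with $\alpha_j \in \mbq \subset R$. Using Lemma \ref{L:DelraNXdtXmq} with $m = j+1$, together with $\Delta_N q_j = 0$, yields
$$\Delta_N\bigl((\mbx)^{j+1}q_j\bigr) = 2(j+1)(N+2d-2j-4)\,(\mbx)^j q_j,$$
so $\alpha_j = 1/\bigl[2(j+1)(N+2d-2j-4)\bigr]$ works. In the range $0 \leq j \leq (d-2)/2$ with $d \geq 2$ and $N \geq 2$, the factor $(j+1)(N+2d-2j-4)$ is positive (in fact at least $N+d-2 \geq 2$), so the $\alpha_j$ are genuine rationals and the construction goes through.

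For uniqueness, suppose $\sum_j (\mbx)^j p_j = \sum_j (\mbx)^j p_j'$ (padded to the same length). Rearranging gives $p_0 - p_0' = (\mbx) \cdot \bigl(\text{polynomial}\bigr)$, and since $p_0 - p_0'$ is harmonic (a difference of harmonics) and divisible by $\mbx$, Proposition \ref{P:kerDelt1} forces $p_0 = p_0'$. Cancelling a factor of $\mbx$ (permissible since $\mbx$ is monic in $X_1$, hence not a zero divisor) and iterating gives $p_j = p_j'$ for all $j$. The main obstacle in the argument is verifying that the denominators $2(j+1)(N+2d-2j-4)$ do not vanish in the required range of $j$; this is exactly where the hypothesis $\mbq \subset R$ is essential, while the rest is routine once Lemma \ref{L:DelraNXdtXmq} is in hand.
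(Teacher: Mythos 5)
Your proof is correct, but it takes a genuinely different route from the paper's. The paper first works over $R=\mbq$, invoking the abstract linear-algebra Lemma \ref{L:AB} (with $A=\Delta_{N,d}$ and $B$ multiplication by $\mbx$, using finite-dimensionality of $\mcp_{N,d}$ over a field) to get the direct-sum decomposition $\mcp_{N,d}=\ker\Delta_{N,d}\oplus(\mbx)\mcp_{N,d-2}$, iterates it, and then transports the result to a general $R\supset\mbq$ by decomposing monomials and taking $R$-linear combinations. You instead run a direct induction on the degree: you decompose $\Delta_N p$, solve $\Delta_N\bigl((\mbx)u\bigr)=\Delta_N p$ explicitly via Lemma \ref{L:DelraNXdtXmq} with the coefficients $\alpha_j = 1/\bigl[2(j+1)(N+2d-2j-4)\bigr]$ (your positivity check $N+2d-2j-4\ge N+d-2\ge 2$ is right, and in fact $u$ is already in the required form $\sum_j(\mbx)^j(\alpha_j q_j)$ with harmonic coefficients, so the final appeal to induction on $u$ is not even needed). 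This works over any $R\supset\mbq$ in one pass, with no dimension count and no detour through $\mbq$, and it is constructive: it produces the $p_j$ as explicit rational multiples of iterated Laplacians of $p$, in the spirit of the Axler--Ramey formula the paper cites. What the paper's route buys in exchange is the surjectivity of $\Delta_{N,d}\colon\mcp_{N,d}\to\mcp_{N,d-2}$ and the dimension formula (\ref{dimf}) as immediate byproducts of Lemma \ref{L:AB}. Your uniqueness argument (harmonic multiples of $\mbx$ vanish by Proposition \ref{P:kerDelt1}, then cancel $\mbx$ and iterate) coincides with the paper's.
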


This result is widely known in the analytic context; for example, Axler and Ramey \cite{Axler1995} (Theorem 1.7 and Corollary 1.8) give an explicit formula for $p_m$. Reznick \cite{Rez1996}(Theorem 4.7) proves a more general expansion and also traces some earlier works on the expansion (\ref{E:pp0p1}). See also Peterson and Sengupta \cite{PolyHigh2018}(equation (3.12)). The earliest reference we could find to this result is to Gauss \cite{gauss} (page 630). Further work was done by Prasad \cite{prasad1930}.

\begin{proof} 
First we treat the case $R = \mbq$. We use
Lemma \ref{L:AB}, with $V =\mcp_{N,j}$, $W = \mcp_{N,j-2}$, the operator  $A$ being $\Delta_{N,j}=\Delta_N|\mcp_{N,j}$ and $B$ being multiplication by ${\mbx}$. We apply repeatedly, to obtain 
\begin{eqnarray}
\mcp_{N,d} & = & \ker\Delta_{N,d} \oplus (\mbx)\mcp_{N, d-2} \label{E:phdec} \\
& = & \ker \Delta_{N,d} \oplus({\mbx})\ker\Delta_{N, d-2}\oplus (\mbx)^2\mcp_{N, d-4}. \nonumber 
\end{eqnarray}
Continuing in this way we get the direct sum decomposition
\begin{equation}\label{E:mcpdec}
\mcp_{N,d}=\mch_{N,d} \oplus ({\mbx})\mch_{N,d-2} \oplus \ldots ,
\end{equation}
and hence  (\ref{E:pp0p1}) for polynomials with rational coefficients.

In particular, each monomial in $\mbq[X_1,\ldots,X_N]$ is the sum of a harmonic 
polynomial and a multiple of $\mbx$ with $\mbq$-coefficients (preserving homogeneity as the proposition describes). Taking 
$R$-linear combinations of monomials, when $R$ contains $\mbq$, shows every element of $R[X_1,\ldots,X_N]$ 
is the sum of a harmonic 
polynomial and a multiple of $\mbx$ (preserving homogeneity).  This decomposition is unique by 
Proposition \ref{P:kerDelt1}, and the degree part follows from (\ref{degXh}). 
\end{proof}

The decomposition (\ref{E:phdec}), or more simply the surjectivity of $\Delta_{N,d}:\mcp_{N,d}\to\mcp_{N,d-2}$ given by Lemma   \ref{L:AB}, provides the following dimension formula for harmonic polynomials in case $R$ is a field:
\begin{eqnarray}\label{E:dimHNd}
\dim_R \mch_{N,d} & = & \dim_R\mcp_{N,d}-\dim_R\mcp_{N,d-2} \nonumber \\
& = & \binom{N+d-1}{N-1}-\binom{N+d-3}{N-1}, \label{dimf}
\end{eqnarray}
where the second formula is based on 
\begin{equation}\label{E:dimharm}
\dim_R\mcp_{N,d}=\binom{N+d-1}{N-1}, 
\end{equation}
which is the number of monomials of degree $d$ in $N$ indeterminates. Formula (\ref{E:dimHNd}) has appeared in the literature before; for example, \cite{harmfunctionbook} (Theorem 5.8).  We will obtain another expression for the dimension of $\mch_{N,d}$ below in (\ref{E:dimmch2}). 

We will see after Proposition \ref{P:harmcons}
that $\mch_{N,d}$ is a free $R$-module when $R$ is a ring 
containing $\mbq$, not necessarily a field, so (\ref{dimf}) is true as a rank formula for $\mch_{N,d}$ as an $R$-module.
However, we can't take $R = \mbz$: some denominators need to be allowed. 
For example, taking $N = d = 2$ in (\ref{E:phdec}) we have $x^2 = \frac{1}{2}(x^2-y^2) + (x,y) \cdot (x,y) \frac{1}{2}$, 
where $\frac{1}{2}(x^2 - y^2)$ is harmonic.  Even though $x^2$ on the left is in $\mbz[x,y]$, 
on the right $\frac{1}{2}(x^2 - y^2)$ is not in $\mbz[x,y]$.

\subsection{Eigenspaces of \texorpdfstring{$\Delta_N$}{DN}}\label{ss:eigenD}

The Laplacian $\Delta_{N,d}$ sends $\mcp_{N,d}$ to $\mcp_{N,d-2}$, so the product  
$(\mbx) \Delta_{N,d}$ sends $\mcp_{N,d}$ to itself. 
From (\ref{E:mcpdec}) we get an eigenspace decomposition for this operator on $\mcp_{N,d}$, as follows.

\begin{prop}\label{P:Deltadecomp}
Suppose that the ring $R$ contains $\mbq$.  The eigenvalues of ${(\mbx)}\Delta_{N,d}$ acting on $\mcp_{N,d}$ 
are $\lambda_m= 2m(N-2+2d-2m)$ 
for integers $m$ where $0 \leq m \leq d/2$, and 
the $\lambda_m$-eigenspace of ${(\mbx)}\Delta_{N,d}$ is $(\mbx)^{m}\mch_{N,d-2m}$.
In particular, when $R$ is a field of characteristic $0$,  the characteristic polynomial of ${(\mbx)}\Delta_{N,d}$ acting on $\mcp_{N,d}$ is
\begin{equation}\label{E:charXD}
\det\bigl(tI- ({\mbx})\Delta_{N,d}\bigr)=\prod_{m=0}^{[d/2]}\left(t-\lambda_m\right)^{\dim_R\mch_{N,d-2m}}.
\end{equation}
\end{prop}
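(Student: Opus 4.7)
The plan is to leverage the direct sum decomposition established in Proposition \ref{P:harmdecomp1}, which says
$$\mcp_{N,d}=\bigoplus_{m=0}^{\lfloor d/2\rfloor}(\mbx)^{m}\mch_{N,d-2m},$$
and show that $(\mbx)\Delta_{N,d}$ acts as the scalar $\lambda_m$ on each summand. Once this is done, the eigenspace description and characteristic polynomial formula essentially fall out.

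First I would fix a homogeneous harmonic polynomial $q\in\mch_{N,d-2m}$ and set $p=(\mbx)^m q$, then apply Lemma \ref{L:DelraNXdtXmq} with $q$ of degree $d-2m$: since $\Delta_N q=0$, the lemma gives
$$\Delta_N p=\bigl(2m(2m+N-2)+4m(d-2m)\bigr)(\mbx)^{m-1}q.$$
A short simplification yields
$$2m(2m+N-2)+4m(d-2m)=2m(N-2+2d-2m)=\lambda_m,$$
so multiplying by $\mbx$ we get $(\mbx)\Delta_{N,d}p=\lambda_m p$. Thus every element of $(\mbx)^m\mch_{N,d-2m}$ is a $\lambda_m$-eigenvector, and combining with the direct sum decomposition above shows that $(\mbx)\Delta_{N,d}$ is a diagonal operator on $\mcp_{N,d}$ in this decomposition, with the indicated eigenvalues.

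It remains to verify that the $\lambda_m$ are pairwise distinct for $0\leq m\leq d/2$, so that these pieces are genuinely distinct eigenspaces and exhaust the spectrum. Viewing $\lambda_m=-4m^2+2m(N-2+2d)$ as a quadratic in $m$, its maximum occurs at $m=(N-2+2d)/4$, which is $\geq d/2$ precisely when $N\geq 2$. Hence $m\mapsto\lambda_m$ is strictly increasing on $\{0,1,\ldots,\lfloor d/2\rfloor\}$, and since $\mbq\subset R$ the differences $\lambda_m-\lambda_{m'}$ are nonzero integers, hence units in $R$; this both confirms distinctness and guarantees that eigenspaces for different $\lambda_m$ intersect trivially. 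When $R$ is a field of characteristic $0$, the operator $(\mbx)\Delta_{N,d}$ is diagonalizable with eigenspace $(\mbx)^m\mch_{N,d-2m}$ of dimension $\dim_R\mch_{N,d-2m}$ (since multiplication by $(\mbx)^m$ is injective, as $\mbx$ is not a zero divisor), and the characteristic polynomial formula (\ref{E:charXD}) follows immediately from the product-of-eigenspaces expression.

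The main obstacle is essentially just the bookkeeping: checking that the arithmetic simplification in Lemma \ref{L:DelraNXdtXmq} produces exactly $\lambda_m$, and confirming that the listed $\lambda_m$ really are distinct so that no collapsing or merging of eigenspaces occurs. Both are routine once the setup is in place, which is why almost all of the real work here has been front-loaded into Proposition \ref{P:harmdecomp1} and Lemma \ref{L:DelraNXdtXmq}.
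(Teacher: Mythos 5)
Your proof is correct and follows essentially the same route as the paper's: apply Lemma \ref{L:DelraNXdtXmq} to see that $(\mbx)\Delta_{N,d}$ acts as the scalar $\lambda_m$ on $(\mbx)^m\mch_{N,d-2m}$, then invoke the direct sum decomposition of Proposition \ref{P:harmdecomp1}. Your explicit check that the $\lambda_m$ are pairwise distinct is a detail the paper leaves implicit, and it is genuinely needed to conclude that each summand is the full $\lambda_m$-eigenspace rather than merely contained in one.
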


\begin{proof} The case $d=0$ is clear, and so we assume $d\geq 1$.
For $h\in \ker\Delta_{N, d-2m}$, where $0 \leq m \leq d/2$, we have $\deg h = d-2m$. 
Therefore by (\ref{E:DeltapXq}) with $d$ there replaced by $d-2m$, 
\begin{equation}
\Delta_{N,d}\bigl((\mbx)^{m}h\bigr)= 2m\left(2m+N-2+2(d-2m)\right)(\mbx)^{m-1}h.
\end{equation} 
Therefore $(\mbx)\Delta_{N,d}$ acts on $(\mbx)^{m}\mch_{N,d-2m}$ as multiplication by $\lambda_m$.
Each $(\mbx)^{m}\mch_{N,d-2m}$ is an eigenspace for $(\mbx)\Delta_{N,d}$  and these 
are all of its eigenspaces in  $\mcp_{N,d}$ by the direct sum decomposition in 
Proposition \ref{P:harmdecomp1}.
\end{proof}

\subsection{Construction of harmonic polynomials}\label{ss:conshar}
We show that a harmonic polynomial $p(X_1,\ldots, X_N)$, when viewed as a polynomial in $X_N$, is completely determined by the `constant term' that depends only on the indeterminates $X_1,\ldots, X_{N-1}$.

\begin{prop}\label{P:harmcons}
Suppose $R$ is a ring containing $\mbq$, and $N$ and $d$ are non-negative integers with $N\geq 2$.  Pick $p\in R[X_1,\ldots, X_N]$ 
that is homogeneous of degree $d\geq 0$ and write it as a polynomial in $X_N$: 
\begin{equation}\label{E:pp0pd}
p=p_0 + p_1X_N + \cdots +p_{d}X_N^d,
\end{equation}
where $p_0,\ldots, p_d\in R[X_1,\ldots, X_{N-1}]$. Then $p$
is harmonic if and only if the $X_N$-coefficients $p_{2}, p_{3},\ldots, p_d$ are determined by $p_0$ and $p_{1}$ through the relations 
\begin{equation}\label{E:kerDrec}
\begin{split}
p_{2k} &=(-1)^{k} \frac{1}{(2k)!}\Delta^kp_0 \ {\rm for } \ 1 \leq k \leq d/2, \\
p_{2k+1} &=(-1)^k\frac{1}{(2k+1)!}\Delta^{k}p_{1} \ {\rm for } \ 1 \leq k \leq (d-1)/2.
\end{split}
\end{equation}
\end{prop}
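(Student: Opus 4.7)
The plan is to expand $\Delta_N p$ as a polynomial in $X_N$ and match coefficients. Since each $p_k\in R[X_1,\ldots,X_{N-1}]$ does not involve $X_N$, splitting $\Delta_N = \Delta_{N-1} + \partial_N^2$ gives
$$\Delta_N p = \sum_{k=0}^d (\Delta_{N-1}p_k)X_N^k + \sum_{k=2}^d k(k-1)p_k X_N^{k-2}.$$
Equating the coefficient of $X_N^j$ to zero for each $j$, the condition $\Delta_N p = 0$ is equivalent to the recurrence
$$(j+1)(j+2)\,p_{j+2} = -\Delta_{N-1}\,p_j \qquad (0 \leq j \leq d-2),$$
together with the ``top'' conditions $\Delta_{N-1}p_{d-1} = 0$ and $\Delta_{N-1}p_d = 0$.

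I would next observe that the two top conditions are automatic from the homogeneity of $p$: since $p_k$ is homogeneous of degree $d-k$ in $X_1,\ldots,X_{N-1}$, the coefficient $p_{d-1}$ is a linear form and $p_d$ is a scalar, both annihilated by $\Delta_{N-1}$ for trivial reasons. Harmonicity of $p$ therefore reduces to the single recurrence above, which decouples into two independent chains $p_0\to p_2\to p_4\to\cdots$ and $p_1\to p_3\to p_5\to\cdots$. Iterating and telescoping the scalar factors via $\frac{1}{(2k)!}\cdot\frac{1}{(2k+1)(2k+2)} = \frac{1}{(2k+2)!}$ produces the stated closed forms by a routine induction. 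Here is where the hypothesis $\mbq\subset R$ is used, so that the factorials can be inverted in $R$.

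Conversely, if $p_0$ and $p_1$ are chosen arbitrarily, homogeneous of degrees $d$ and $d-1$ respectively, and the remaining $p_k$ are defined by the given formulas, then the recurrence holds by construction for $0 \leq j \leq d-2$ while the top conditions hold by the same degree argument, so $\Delta_N p = 0$. The dimension formula (when $R$ is a field) then follows at once: harmonic homogeneous polynomials of degree $d$ are parametrized freely by pairs $(p_0,p_1)\in\mcp_{N-1,d}\oplus\mcp_{N-1,d-1}$, so (\ref{E:dimharm}) gives $\dim_R\mch_{N,d} = \binom{N+d-2}{N-2} + \binom{N+d-3}{N-2}$. The main---and really only---subtlety is recognizing that the top-coefficient equations $\Delta_{N-1}p_{d-1}=0$ and $\Delta_{N-1}p_d=0$ are not independent constraints on the data $(p_0,p_1)$ but automatic consequences of homogeneity; once that is noticed the rest of the argument is straightforward bookkeeping.
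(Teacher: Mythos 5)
Your proof is correct and follows essentially the same route as the paper: expand $\Delta_N p$ in powers of $X_N$ (the paper does this via the product formula $\Delta(p_jX_N^j)=j(j-1)p_jX_N^{j-2}+(\Delta p_j)X_N^j$, which is the same computation as your splitting $\Delta_N=\Delta_{N-1}+\partial_N^2$), equate coefficients to get the recurrence $(j+1)(j+2)p_{j+2}=-\Delta p_j$, and iterate. Your explicit remark that the top conditions $\Delta p_{d-1}=0$ and $\Delta p_d=0$ are automatic by homogeneity is a point the paper passes over silently, and is worth making.
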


\begin{proof} Let us note first that for any polynomial $q\in R[X_1,\ldots, X_{N}]$ that is independent of $X_N$, we have $\Delta_Nq=\Delta_{N-1}q$.

Since $p$ is homogeneous of degree $d$, in (\ref{E:pp0pd}) each nonzero  $p_k$ is homogeneous 
of degree $d-k$. 

Since $p_j$ and $X_N^j$ are polynomials in disjoint sets of indeterminates, using the product formula (\ref{E:Deltprod}), we have:
$$
\Delta(p_jX_N^j) = p_j \Delta(X_N^j)  + (\Delta p_j)X_N^j =  j(j-1)p_jX_N^{j-2} + (\Delta p_j)X_N^j
$$ 
(the middle term $\sum_{i=0}^d 2(\partial_i p_j)(\partial_i X_N^j)$ is 0 since each summand is 0). 
Therefore
\begin{eqnarray}
\Delta p & = & \sum_{j=0}^d \Delta(p_jX_N^j) \nonumber  \\
& = &  \sum_{j=0}^d \left(j(j-1)p_jX_N^{j-2} + (\Delta p_j)X_N^j\right)  \nonumber \\
& = & \sum_{j=0}^d \left((j+2)(j+1)p_{j+2} + (\Delta p_j)\right)X_N^j. \label{pjj}
\end{eqnarray}
We have $\Delta p = 0$ if and only if 
 every coefficient of $X_N$ in (\ref{pjj}) is 0, which is equivalent to 
 $p_{j} = -(\Delta p_{j-2})/(j(j-1))$ for $j \geq 2$. 
This is the same as (\ref{E:kerDrec}).
 \end{proof}

Since $p_0$ and $p_{1}$ determine $p_j$ for $j \geq 2$ in 
(\ref{E:pp0pd}) when $p$ is harmonic, 
and there is no constraint on $p_0$ and $p_1$ other than their degrees (and being homogeneous if they are not 0) the mapping 
\begin{equation}\label{E:mchiso}
\mch_{N,d}\to \mcp_{N-1,d}\oplus \mcp_{N-1,d-1} \ \text{ given by } \ p\mapsto (p_0, p_{1})
\end{equation}
is an $R$-module isomorphism. 
Both $\mcp_{N-1,d}$ and $\mcp_{N-1,d-1}$ are free $R$-modules for all $R$, so 
if $R$ is a ring containing $\mbq$  
the $R$-module $\mch_{N,d}$ is free 
with a basis of cardinality  
\begin{equation}\label{E:dimmch2}
    \rank_R \mch_{N,d}=\binom{N+d-2}{N-2}+\binom{N+d-3}{N-2},
\end{equation}
where for the right side we use the rank formula (\ref{E:dimharm}) (that formula does not need $R$ to be a field). 
The formula in (\ref{E:dimmch2}) agrees with the one in (\ref{dimf})  
by using a standard binomial coefficient identity twice. 

\begin{corollary}
When $R$ contains $\mbq$, there is a basis of $\mch_{N,d}$ with coefficients in $\mbq$. 
\end{corollary}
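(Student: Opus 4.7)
The plan is to exploit the $R$-module isomorphism $\mch_{N,d}\to\mcp_{N-1,d}\oplus\mcp_{N-1,d-1}$ given by $p\mapsto (p_0,p_1)$ in (\ref{E:mchiso}), together with the explicit inverse formulas in Proposition \ref{P:harmcons}. All the work is essentially already present in that proposition; the corollary is really just an observation about which ring the construction lives over.

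First I would fix the standard $R$-basis $\mathcal{B}$ of $\mcp_{N-1,d}\oplus\mcp_{N-1,d-1}$ consisting of pairs $(m,0)$ with $m$ a degree-$d$ monomial in $X_1,\ldots,X_{N-1}$ together with pairs $(0,m')$ with $m'$ a degree-$(d-1)$ monomial in $X_1,\ldots,X_{N-1}$. Every element of $\mathcal{B}$ lies in $\mbq[X_1,\ldots,X_{N-1}]\oplus\mbq[X_1,\ldots,X_{N-1}]$, in fact in $\mbz[X_1,\ldots,X_{N-1}]\oplus\mbz[X_1,\ldots,X_{N-1}]$.

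Next, for each $(p_0,p_1)\in\mathcal{B}$ I would apply Proposition \ref{P:harmcons} to obtain the unique homogeneous harmonic polynomial $\widetilde p\in\mch_{N,d}$ with constant and linear $X_N$-coefficients equal to $p_0$ and $p_1$. The key point is that the reconstruction formulas
\begin{equation*}
p_{2k}=(-1)^k\frac{1}{(2k)!}\Delta^k p_0,\qquad p_{2k+1}=(-1)^k\frac{1}{(2k+1)!}\Delta^k p_1
\end{equation*}
use only the Laplacian (whose coefficients are integers) together with the rational scalars $1/(2k)!$ and $1/(2k+1)!$. Consequently each $\widetilde p$ produced from a pair in $\mathcal{B}$ has coefficients in $\mbq$, i.e.\ $\widetilde p\in\mbq[X_1,\ldots,X_N]$.

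Finally, since the isomorphism (\ref{E:mchiso}) is $R$-linear and carries the set of constructed $\widetilde p$'s bijectively onto the $R$-basis $\mathcal{B}$, the set of these $\widetilde p$'s is itself an $R$-basis of $\mch_{N,d}$; by the previous paragraph it lies entirely in $\mbq[X_1,\ldots,X_N]$. There is no real obstacle to overcome here: the only thing to verify is that the formulas from Proposition \ref{P:harmcons} are defined over $\mbq$, which is immediate by inspection.
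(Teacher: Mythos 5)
Your proof is correct and follows essentially the same route as the paper: both arguments take the monomial bases of $\mcp_{N-1,d}$ and $\mcp_{N-1,d-1}$, pull them back through the isomorphism (\ref{E:mchiso}), and observe that the reconstruction formulas (\ref{E:kerDrec}) involve only the Laplacian and rational scalars, so the resulting basis of $\mch_{N,d}$ has coefficients in $\mbq$.
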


\begin{proof}
Both $\mcp_{N-1,d}$ and $\mcp_{N-1,d-1}$ have monomial bases with coefficient 1. By 
(\ref{E:kerDrec}), if $p_0$ and $p_1$ have $\mbq$-coefficients then the associated homogeneous 
harmonic polynomial (\ref{E:pp0pd}) has $\mbq$-coefficients. 
\end{proof}

\begin{example}\label{ex2}
Let us look at the case $N=2$: from (\ref{E:dimmch2}), 
when the coefficient ring $R$ contains $\mbq$ the $R$-module 
$\mch_{2,d}$ of homogeneous harmonic polynomials in $x$ and $y$ of degree $d$ has a basis of size 2.
General polynomials in $\mcp_{N-1,d}$ and $\mcp_{N-1,d-1}$ are $p_0 = ax^d$ and $p_1 = bx^{d-1}$, respectively, 
where  $a$ and $b$ are in $R$. 
Using (\ref{E:kerDrec}), 
the homogeneous harmonic polynomial in $x$ and $y$ of degree $d$ with such $p_0$ and $p_1$ is 
$$
a\sum_{2k \leq d} (-1)^k \binom{d}{2k}x^{d-2k}y^{2k} + b\sum_{2k+1 \leq d} (-1)^k \frac{1}{d}\binom{d}{2k+1}x^{d-2k-1}y^{2k+1}.
$$
The first sum is $((x+iy)^d + (x-iy)^d)/2 = \re((x+iy)^d)$ and the second sum is 
$((x+iy)^d - (x-iy)^d)/(2id) = \im((x+iy)^d)/d$, which recovers the classical fact that 
$\re((x+iy)^d)$ and $\im((x+iy)^d)$ are a basis of $\mch_{2,d}$ when $R = \mbr$.
\end{example}

\subsection{Polynomials constant on spheres}\label{ss:polyconstsphere}

If a polynomial $p$ is in the ideal of $R[X_1,\ldots,X_N]$ generated by ${\mbx}-c$, for some $c\in R$, 
then we think of $p$ as being `equal to zero' on the `sphere' given by the equation ${\mbx}=c$.  Similarly, we say that $p$ is `constant', equal to $c'\in R$,  on the sphere given by ${\mbx}-c$ if $p-c'$ is zero on this sphere.

We will apply the harmonic polynomial decomposition in Proposition \ref{P:harmdecomp1} 
to determine polynomials that are `constant on a sphere' using the following two lemmas.

\begin{lemma}\label{L:divide}
    For a commutative ring $R$,  $p\in R[X_1,\ldots, X_N]$ and $a \in R$ that is not a zero divisor, suppose 
    $ap$ is divisible by $\mbx-c$, where $c\in R$. Then $p$ is divisible by $\mbx-c$. 
    \end{lemma}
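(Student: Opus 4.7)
The plan is to use the division algorithm, exploiting the fact that $\mbx - c = X_1^2 + (X_2^2 + \cdots + X_N^2 - c)$ is monic of degree $2$ in $X_1$ when we regard $R[X_1,\ldots,X_N]$ as $R[X_2,\ldots,X_N][X_1]$. Division by a monic polynomial works over any commutative ring, so we may write
$$
p = (\mbx - c)g + r
$$
with $g, r \in R[X_1,\ldots,X_N]$ and $\deg_{X_1}(r) \leq 1$ (including the case $r = 0$).

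Multiply by $a$ and use the hypothesis $ap = (\mbx - c)q$ to obtain
$$
(\mbx - c)(q - ag) = ar.
$$
Now compare degrees in $X_1$. Since $\mbx - c$ has leading term $X_1^2$ (with leading coefficient $1$, which is not a zero divisor), the left-hand side is either $0$ or a polynomial of $X_1$-degree at least $2$. The right-hand side, however, has $X_1$-degree at most $1$. Therefore both sides vanish: $q = ag$ and $ar = 0$.

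It remains to deduce $r = 0$ from $ar = 0$. Writing $r = \sum_{\vec{\jmath}} r_{\vec{\jmath}} X^{\vec{\jmath}}$ with $r_{\vec{\jmath}} \in R$, the equation $ar = 0$ gives $a r_{\vec{\jmath}} = 0$ for every $\vec{\jmath}$, and since $a$ is not a zero divisor in $R$ we conclude $r_{\vec{\jmath}} = 0$ for every $\vec{\jmath}$, hence $r = 0$. Thus $p = (\mbx - c)g$ is divisible by $\mbx - c$, as required.

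This argument is essentially routine; the only point to check carefully is the degree comparison, and the whole proof hinges on the observation that $\mbx - c$ is monic in one of the variables. No additional hypothesis on $R$ beyond $a$ being a non-zero-divisor is needed.
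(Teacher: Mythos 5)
Your proof is correct and follows essentially the same route as the paper's: divide by $\mbx - c$ (monic in one variable), multiply by $a$, and compare degrees in that variable to force the remainder to vanish, using that $a$ is not a zero divisor. The only cosmetic differences are that the paper works with $X_N$ rather than $X_1$ and invokes the non-zero-divisor hypothesis in the degree comparison itself rather than at the final step $ar=0 \Rightarrow r=0$.
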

    
    \begin{proof} 
    Since $\mbx  - c$ is monic as a polynomial in $X_N$ we can write 
    $$
    p = (\mbx - c)q + r
    $$
    where $q, r \in R[X_1,\ldots,X_{N-1}][X_N]$ with 
    $r = 0$ or $\deg_{X_N}(r) \leq 1$ since $\deg_{X_N}(\mbx) = 2$.
    Multiplying both sides by $a$, 
    $$
    ap = (\mbx - c)aq + ar,
    $$
    so from $ap$ being divisible by $\mbx - c$ we get $(\mbx - c) \mid (ar)$ in $R[X_1,\ldots,X_N]$. 
    Write $ar = (\mbx - c)g$. 
    Since $\mbx  - c$ is monic in $X_N$, 
     if $r \not= 0$ then 
    $\deg_{X_N}(ar) = 2 + \deg_{X_N}(g) \geq 2$,  but 
    $\deg_{X_N}(ar) = \deg_{X_N}(r) \leq 1$ since $a$ is not a zero divisor.  This is a contradiction, 
    so $r = 0$ and thus $(\mbx - c) \mid p$.
  %
%
%
    \end{proof}

\begin{lemma}\label{L:coprimediv}
Let $R$ be a commutative ring, and $c_1,\ldots, c_m$ for $m \geq 2$ 
be distinct elements of $R$ such that no difference $c_i - c_j$ for $i \not= j$ 
is a zero-divisor. If $p\in R[X_1,\ldots, X_N]$ is divisible by $\mbx-c_j$ for 
$j = 1,\ldots, m$ then $p$ is divisible by $\prod_{j=1}^m(\mbx -c_j)$. 
\end{lemma}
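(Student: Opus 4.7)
My proof plan is to induct on $m$, leveraging Lemma \ref{L:divide} as the cancellation tool. The essential observation is that modulo $\mbx - c_i$, the polynomial $\mbx - c_j$ reduces to the scalar $c_i - c_j$, so the ``coprimality'' of these factors is controlled entirely by the hypothesis on the differences $c_i - c_j$.

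For the base case $m = 2$, write $p = (\mbx - c_1)q$. The trick is the identity $\mbx - c_1 = (\mbx - c_2) + (c_2 - c_1)$, which gives $p = (\mbx - c_2)q + (c_2 - c_1)q$. Since $(\mbx - c_2) \mid p$, this forces $(\mbx - c_2) \mid (c_2 - c_1)q$. The hypothesis says $c_2 - c_1$ is not a zero divisor in $R$, so Lemma \ref{L:divide} yields $(\mbx - c_2) \mid q$, and hence $(\mbx - c_1)(\mbx - c_2) \mid p$.

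For the inductive step, assume the result for $m - 1$ distinct constants. By the inductive hypothesis applied to $c_1, \ldots, c_{m-1}$, we have $p = \prod_{j=1}^{m-1}(\mbx - c_j) \cdot q$ for some $q$, and $(\mbx - c_m) \mid p$ by assumption. Writing each factor as $\mbx - c_j = (\mbx - c_m) + (c_m - c_j)$ and expanding the product, we obtain
$$\prod_{j=1}^{m-1}(\mbx - c_j) = (\mbx - c_m)\cdot A + \prod_{j=1}^{m-1}(c_m - c_j)$$
for some $A \in R[X_1,\ldots,X_N]$. Therefore $(\mbx - c_m) \mid \prod_{j=1}^{m-1}(c_m - c_j) \cdot q$. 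Each factor $c_m - c_j$ is a non-zero-divisor by hypothesis, and a product of non-zero-divisors is a non-zero-divisor, so Lemma \ref{L:divide} gives $(\mbx - c_m) \mid q$, completing the induction.

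The main obstacle---really more of a mild subtlety---is ensuring that the non-zero-divisor hypothesis on pairwise differences propagates through the induction; this requires the ring-theoretic fact that a product of non-zero-divisors is a non-zero-divisor, which lets us reduce the inductive step to a single application of Lemma \ref{L:divide} rather than a messier analysis.
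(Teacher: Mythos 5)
Your proof is correct and follows essentially the same route as the paper's: induction on $m$, with the cancellation supplied by Lemma \ref{L:divide} together with the observation that $\mbx - c_j$ reduces to the scalar $c_i - c_j$ modulo $\mbx - c_i$. The only difference is organizational: the paper peels off the factors $\mbx - c_1, \ldots, \mbx - c_{m-1}$ one at a time, so it only ever applies Lemma \ref{L:divide} with a single difference $c_m - c_j$, whereas you collapse the whole product in one step and therefore also need the (standard, and correctly invoked) fact that a product of non-zero-divisors is a non-zero-divisor.
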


\begin{proof} 
First we will show that if $\mbx - a$ divides $(\mbx - b)g$ for $a, b \in R$ 
and $a-b$ is not a zero divisor in $R$ then 
$\mbx - a$ divides $g$.  

For some polynomial $h$ we have $(\mbx - a)h = (\mbx - b)g$.
Then
\begin{equation}
bg =(\mbx)(g-h)+ah,
\end{equation}
so
\begin{equation}\label{E:cq12c12}
(b-a)g =(\mbx-a)(g-h).
\end{equation}
By Lemma \ref{L:divide}, $\mbx-a$ divides $g$.

Now assume $p$ is divisible by $\mbx - c_j$ for $j = 1, \ldots, m$ with $c_i - c_j$ not being a zero divisor 
for all $i \not= j$. 
Inductively we can suppose $p$ is divisible by $(\mbx - c_1)\cdots (\mbx - c_{m-1})$, say $p = (\mbx - c_1)\cdots (\mbx - c_{m-1})q$ in $R[X_1,\ldots,X_N]$.  We will show $\mbx - c_m$ divides $q$ and then we'll be done.
Since $\mbx - c_m \mid (\mbx - c_1)(\mbx - c_2)\cdots (\mbx - c_{m-1})q$, we get 
$\mbx - c_m \mid (\mbx - c_2)\cdots (\mbx - c_{m-1})q$ since $c_m - c_1$ is not a zero divisor. Then  
 $\mbx - c_m \mid (\mbx - c_3)\cdots (\mbx - c_{m-1})q$ since $c_m - c_2$ is not a zero divisor. 
 Eventually  we are left with $(\mbx - c_m) \mid q$ and we are done.
\end{proof}

\begin{prop}\label{P:rotinvMjk}
Suppose that $R$ is a ring containing $\mbq$, and $p\in R[X_1,\ldots, X_N]$, where $N\geq 2$.
\begin{itemize}
\item[(i)] If, for some $c \in R$, all $M_{jk}p$ are divisible by $\mbx-c$ 
then $p$ is equal to a constant modulo $\mbx - c$. 

\item[(ii)] If $p$ is nonzero of degree $d$ and for an $m \geq d/2$ 
there are $c_1, \ldots, c_m$ in $R$ 
whose pairwise differences are not zero divisors with each $M_{jk}p$ divisible by all $\mbx-c_i$  
then $p$ is a polynomial in $\mbx$. 

\item[(iii)] If $p$ is nonzero and homogeneous of degree $d$  and there is $c \in R$ that is not a zero divisor such that all 
 $M_{jk}p$ are divisible by $\mbx-c$  then $d$ is even and  $p=a(\mbx)^{d/2}$ where $a\in R$. 
 \end{itemize}
\end{prop}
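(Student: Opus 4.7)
The plan is to treat all three parts through a single reduction based on the harmonic decomposition of Proposition \ref{P:harmdecomp1}. I will write $p = \sum_{i=0}^{s} (\mbx)^i g_i$ with each $g_i$ harmonic (and homogeneous of degree $d-2i$ in the homogeneous case). Since $M_{jk}$ annihilates $\mbx$ by (\ref{E:Mjkmbx}), multiplication by $\mbx$ commutes with $M_{jk}$. A short commutator computation, $[M_{jk}, \partial_l^2] = -2\delta_{jl}\partial_l\partial_k + 2\delta_{kl}\partial_l\partial_j$ summed over $l$, shows that $[M_{jk}, \Delta_N] = 0$, so $M_{jk}$ preserves harmonicity. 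Consequently $M_{jk}p = \sum_i (\mbx)^i (M_{jk}g_i)$ is again a harmonic decomposition.

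The central observation is the following: if $M_{jk}p \in (\mbx - c)$, then modulo $\mbx - c$ we have $M_{jk}p \equiv \sum_i c^i M_{jk}g_i = M_{jk}\bigl(\sum_i c^i g_i\bigr)$. The right-hand side is harmonic and a multiple of $\mbx - c$, so by Proposition \ref{P:kerDelt2} it vanishes. As this holds for every pair $j \neq k$, Proposition \ref{P:Mxyp0N} gives $\sum_i c^i g_i = Q(\mbx)$ for some $Q \in R[T]$; harmonicity then combined with Proposition \ref{P:kerDelt1} applied to $Q(\mbx) - Q(0) = (\mbx)\widetilde{Q}(\mbx)$ forces $\sum_i c^i g_i \in R$. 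This identity is the common engine of all three parts.

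Part (i) is then immediate: modulo $\mbx - c$ we have $p \equiv \sum_i c^i g_i \in R$. For part (iii), with $p$ homogeneous of degree $d$ and $c$ not a zero divisor, each $g_i$ is homogeneous of degree $d - 2i$, so $\sum_i c^i g_i \in R$ forces $c^i g_i = 0$ whenever $d - 2i > 0$, and the non-zero-divisor hypothesis on $c$ then gives $g_i = 0$ for $i < d/2$. If $d$ is odd this annihilates every term and yields $p = 0$, contradicting the hypothesis; if $d$ is even, only $g_{d/2}$ survives, a harmonic polynomial of degree $0$, and so $p = g_{d/2}(\mbx)^{d/2}$ with $g_{d/2} \in R$. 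For part (ii), the identity $\sum_i c_\ell^i g_i \in R$ holds for each $\ell$; decomposing each $g_i$ into homogeneous components $g_{i,k}$, the vanishing of the positive-degree parts yields the linear system $\sum_i c_\ell^i g_{i,k} = 0$ for each $k \geq 1$ and $\ell = 1,\ldots, m$. For fixed $k \geq 1$ the number of potentially nonzero unknowns $g_{i,k}$ is $\lfloor(d-k)/2\rfloor + 1 \leq \lceil d/2 \rceil \leq m$, so picking a square Vandermonde submatrix of that size gives a determinant $\prod_{a<b}(c_b - c_a)$, a product of the hypothesized non-zero-divisor differences and hence itself not a zero divisor. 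Multiplying by the classical adjugate kills every $g_{i,k}$, so each $g_i \in R$ and $p = \sum_i g_i (\mbx)^i$ is a polynomial in $\mbx$.

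The main obstacle will be the Vandermonde bookkeeping in (ii): checking that the bound $m \geq d/2$ (which for integer $m$ means $m \geq \lceil d/2 \rceil$) exactly dominates the worst-case number of unknowns, occurring at $k = 1$, and that a square Vandermonde matrix over a ring with zero divisors is still invertible on the system provided its pairwise differences are non-zero-divisors (so that adjugate-times-determinant gives the zero conclusion). Beyond that, the argument is a clean combination of earlier results, the minor preliminary being the verification that $[M_{jk}, \Delta_N] = 0$ so that $M_{jk}$ does propagate harmonicity.
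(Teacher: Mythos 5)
Your argument is correct. For parts (i) and (iii) it is essentially the paper's own proof: both reduce $p$ modulo $\mbx-c$ to the harmonic polynomial $p_*=\sum_i c^i g_i$ via Proposition \ref{P:harmdecomp1}, kill $M_{jk}p_*$ with Proposition \ref{P:kerDelt2}, invoke Proposition \ref{P:Mxyp0N}, and finish with Proposition \ref{P:kerDelt1}; the paper likewise relies on $[M_{jk},\Delta_N]=0$ at this point (it is stated later as Proposition \ref{P:mjkharm}), so your explicit commutator check is a reasonable inclusion. Part (ii) is where you genuinely diverge. The paper splits into two cases: for $m>d/2$ it applies Lemma \ref{L:coprimediv} to conclude each nonzero $M_{jk}p$ would have degree at least $2m>d$, forcing $M_{jk}p=0$; for $m=d/2$ it runs an induction on degree, using (i) to write $p=c_0+(\mbx-c_1)q$ and the cancellation step from Lemma \ref{L:coprimediv} to pass the divisibility hypotheses (for $c_2,\ldots,c_m$) down to $q$. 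You instead apply the "engine" once for each $c_\ell$ to get the relations $\sum_i c_\ell^i g_{i,k}=0$ on the homogeneous pieces of the harmonic components, and solve the resulting Vandermonde system, using that a product of non-zero-divisor differences is a non-zero-divisor and that $\operatorname{adj}(V)V=\det(V)I$ over any commutative ring. Your count $\lfloor(d-k)/2\rfloor+1\le\lceil d/2\rceil\le m$ is right, so the square submatrix exists. The trade-off: the paper's route is more elementary (no linear algebra over a ring with zero divisors, no bookkeeping of bi-graded components) but needs a case split and a degree induction; your route treats all $m\ge d/2$ uniformly, avoids the induction entirely, and makes visible exactly why $d/2$ is the sharp threshold, namely it is the maximal number of unknown harmonic components in a fixed positive degree. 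Both are valid.
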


If $p$ is equal to a `constant'  mod ${\mbx}-c$ (that is, for some $c' \in R$, $p-c'$ belongs to the ideal generated by $\mbx-c$) 
then every $M_{jk}p$ is  $0$ mod ${\mbx} -c$. The proposition provides  statements in the converse direction to this observation. Note that in (iii) the hypothesis is that, for $p$ homogeneous, $M_{jk}p$ are divisible by $\mbx-c$ and the conclusion implies that $M_{jk}p$ are in fact all $0$.

\begin{proof} 
By Proposition \ref{P:harmdecomp1} we can write
\begin{equation}\label{E:Pexpharm}
    p=p_0+(\mbx) p_1+\cdots + ({\mbx})^{s}p_s,
    \end{equation}
    with $p_0,\ldots, p_s$ all harmonic polynomials. By  (\ref{mjkxx}),  
    $M_{jk}((\mbx)^mp_m) = (\mbx)^m M_{jk}(p_m)$ 
    for all $m \geq 0$, so 
    \begin{equation*}
    M_{jk}p=M_{jk}p_0+(\mbx) M_{jk}p_1+\cdots + ({\mbx})^sM_{jk}p_s.
    \end{equation*}
Now let
    \begin{equation}\label{p-stareq}
    p_*=p_0+c p_1+\cdots+c^sp_s.
    \end{equation}
    This is harmonic (since each $p_k$ is) and equal to $p$ mod  $({\mbx}-c)$.
    
    (i)   Since $M_{jk}(\mbx - c) = 0$, from 
    $p \equiv p_* \bmod \mbx  - c$ we get
     $M_{jk}p_* \equiv M_{jk}p \bmod ({\mbx}-c)$, and so, by the hypothesis in (i),  $M_{jk}p_* \equiv 0 \bmod ({\mbx}-c)$.  
     Since $p_*$ is harmonic and $M_{jk}$ commutes with $\Delta$, 
     $M_{jk}p_*$ is harmonic. Thus by Proposition \ref{P:kerDelt2}, $M_{jk}p_*=0$.  This being true for all 
    distinct $j, k\in\{1,\ldots, N\}$, it follows by Proposition  \ref{P:Mxyp0N} that $p_*$ is a polynomial in $\mbx$ with coefficients in $R$.  
    Therefore $p_* - p_*(0)$ is a multiple of $\mbx$.
    Since $p_*$ is harmonic so is $p_* - p_*(0)$, so $p_* - p_*(0) = 0$ by Proposition \ref{P:kerDelt1}: $p_*$ is a constant. Thus $p$ is equal to a constant modulo $\mbx-c$.

 (ii)  
 First we give a proof when $m > d/2$. By Lemma \ref{L:coprimediv}, each $M_{jk}p$ is divisible by $\prod_{i=1}^m(\mbx - c_i)$, so if $M_{jk}(p) \not= 0$ then $\deg(M_{jk}p) \geq 2m$ by (\ref{degXh}). Since $p$ has degree $d$, $M_{jk}(p)$ has degree at most $d$, so $2m \leq d$. Thus if $m > d/2$ it follows that all $M_{jk}(p) = 0$, so $p$ is a polynomial in 
 $\mbx$ by Proposition \ref{P:Mxyp0N}.
  
The argument above does not apply if $m = d/2$, but the following argument does (and also applies to the case $m > d/2$). By (i) we can write 
\begin{equation}\label{pcq}
p=c_0+(\mbx-c_1)q,  
\end{equation}
    where $c_0\in R$ and $q\in R[X_1,\ldots, X_N]$. 
    Then $\deg q = d-2$ by (\ref{degXh}), unless $q=0$.
    If $d = 2$ then $q$ is constant and (\ref{pcq}) shows 
    $p$ is a polynomial in $\mbx$. 
    Suppose that $d>2$ and $m = d/2$. Since $M_{jk}(\mbx) = 0$, applying $M_{jk}$ to both sides of (\ref{pcq}) gives us $$M_{jk}p=(\mbx - c_1)M_{jk}q.$$
   As seen in the proof of Lemma \ref{L:coprimediv} (the first paragraph in the proof), $M_{jk}q$ is a multiple of ${\mbx}-c_i$ for $i = 2,\ldots, m$; note that $m-1 =  (d-2)/2$. Inductively, we conclude that $q$ is a polynomial in $\mbx$, so $p$ is as well. 
    
  (iii)   When $p$ is homogeneous of degree $d$, each $p_j$ in (\ref{E:Pexpharm}) is 0 or is homogeneous of degree $d-2j$ 
  by Proposition \ref{P:harmdecomp1}. It follows then that for each $j$, the term $c^jp_j$ is the homogeneous part of $p_*$ of degree $d-2j$. By the proof of (i), $p_*$ in (\ref{p-stareq}) 
  is constant; hence each $c^jp_j$ is $0$ unless
  $d-2j=0$. Since $c$ is not a zero divisor, if $c^jp_j = 0$ then $p_j = 0$. Since $p$ is not 0, 
  we get $p = p_{j_0}(\mbx)^{j_0}$, with  $d - 2j_0 = 0$.  Therefore $d = 2j_0$ is even and 
  $p = a(\mbx)^{d/2}$ where $a = p_{j_0}$ is in $R$.
  %
    \end{proof}

Next we note a consequence of Proposition \ref{P:rotinvMjk}.
%

\begin{prop}\label{P:rotinvMjk2}
Let $R$ be a ring containing $\mbq$ and $N \geq 2$. 
Suppose that   $p\in R[X_1,\ldots, X_N]$ is such that $M_{12}p$ is in the ideal generated by $X_1^2+X_2^2-c$ for some $c\in R$. Then
\begin{equation*}
p= a +(X_1^2+X_2^2-c)q
\end{equation*}
where $a \in R$ and $q \in R[X_1,\ldots, X_N]$.
\end{prop}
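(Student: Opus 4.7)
The plan is to reduce the claim to the $N=2$ case of Proposition \ref{P:rotinvMjk}(i) by enlarging the coefficient ring. Set $R' = R[X_3,\ldots,X_N]$, which still contains $\mbq$, and identify $R[X_1,\ldots,X_N]$ canonically with $R'[X_1,X_2]$. The operator $M_{12} = X_1\partial_2 - X_2\partial_1$ involves only the indeterminates $X_1,X_2$ and is $R'$-linear, so it coincides under this identification with the unique rotation generator on $R'[X_1,X_2]$. Likewise, $X_1^2+X_2^2-c$ is exactly the analogue of $\mbx - c$ in the two-variable setting over $R'$, and the ideal it generates in $R[X_1,\ldots,X_N]$ is the extension of the ideal it generates in $R'[X_1,X_2]$.

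Under this identification, the hypothesis that $M_{12}p$ is in the ideal $(X_1^2+X_2^2-c)R[X_1,\ldots,X_N]$ becomes the assumption that every $M_{jk}p$ (there is only $M_{12}$ in the $N=2$ case) is divisible by $\mbx - c$ in $R'[X_1,X_2]$. Proposition \ref{P:rotinvMjk}(i), applied with $N=2$ and coefficient ring $R'$, then produces an element $a \in R'$ such that $p - a \in (X_1^2+X_2^2-c)R'[X_1,X_2]$, and writing $p - a = (X_1^2+X_2^2-c)q$ with $q \in R'[X_1,X_2] = R[X_1,\ldots,X_N]$ yields the decomposition $p = a + (X_1^2+X_2^2-c)q$ in the desired form.

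The main obstacle, and the point where the statement as printed deserves scrutiny, lies in the requirement that $a \in R$ rather than $a \in R' = R[X_3,\ldots,X_N]$. The reduction above produces $a$ naturally in $R'$, and this cannot in general be sharpened: for instance, taking $p = X_3$ gives $M_{12}p = 0$, which is trivially in the ideal $(X_1^2+X_2^2-c)$, yet $X_3$ is not congruent to any element of $R$ modulo $X_1^2+X_2^2-c$, since $X_3 - a = (X_1^2+X_2^2-c)q$ forces $q = 0$ on degree grounds and then $X_3 = a \in R$, a contradiction for $N \geq 3$. Thus the argument establishes exactly the conclusion that $a$ is a constant with respect to the variables $X_1,X_2$ (equivalently, $a \in R[X_3,\ldots,X_N]$), and only when $p \in R[X_1,X_2]$ itself — which is the implicit setting in which the stated form $a \in R$ is actually correct — does the proof deliver $a \in R$.

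I would therefore carry out the proof in three steps: first, extract the absorption of $X_3,\ldots,X_N$ into the coefficient ring; second, invoke Proposition \ref{P:rotinvMjk}(i) in the resulting two-variable polynomial ring over $R'$; and third, transport the resulting congruence back to $R[X_1,\ldots,X_N]$ to obtain the displayed decomposition, with the understanding that $a$ is the constant term produced in the enlarged coefficient ring (and lies in $R$ whenever the starting $p$ does not involve $X_3,\ldots,X_N$).
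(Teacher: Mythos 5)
Your proof is correct and is essentially the paper's own argument: absorb $X_3,\ldots,X_N$ into the coefficient ring $R'=R[X_3,\ldots,X_N]$ and apply Proposition \ref{P:rotinvMjk}(i) in the two-variable case to get $p\equiv a \bmod (X_1^2+X_2^2-c)$, then transport back. Your caveat about the constant is also well taken: that argument only yields $a\in R[X_3,\ldots,X_N]$, and your example $p=X_3$ (where $M_{12}p=0$ but $X_3$ is not congruent to any element of $R$ modulo $X_1^2+X_2^2-c$) shows the printed conclusion $a\in R$ cannot hold for $N\geq 3$, so the statement --- and the paper's proof, which asserts $a\in R$ after the very same base change --- should be corrected to $a\in R[X_3,\ldots,X_N]$ (with $a\in R$ only when $p\in R[X_1,X_2]$).
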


 \begin{proof}
 Since $M_{12}p$ and $M_{21}p = -M_{12}p$ are divisible by $X_1^2+X_2^2 - c$, 
 by Proposition \ref{P:rotinvMjk}(i) 
 with $R$ replaced by $R[X_3,\ldots, X_N]$ we have $p \equiv a \bmod (X_1^2+X_2^2-c)$ for some $a \in R$.
Therefore $p = a + (X_1^2 + X_2^2-c)q$ where $q$ is in $R[X_3,\ldots,X_N][X_1,X_2] = R[X_1,\ldots,X_N]$.                                                                                                                                       
 \end{proof}
 
 \subsection{Polynomials that vanish on a sphere}\label{ss:zerosphere}
 
 We turn now to showing (Proposition \ref{P:pXN2a})  that, under some conditions on the ring $R$, the ideal of  polynomials in $R[X_1,\ldots, X_N] $ that vanish on the `sphere of radius'  $a$  in $R^N$ is the ideal generated by ${\mbx}-a^2$.

For a ring $R$, any positive integer $k$ and any $a\in R$ we define the $k$-dimensional open ball $B_k(a)$ to be the set of all $(c_1,\ldots, c_k)\in R^k$ such that  $$c_1^2+\cdots +c_k^2+y^2=a^2,$$
for some nonzero $y\in R$.  (This is motivated by the geometry of the case $R = \mbr$.)

For the following results we will impose a certain property on $R$ that ensures that open balls $B_N(a)$ contain infinitely many points on each `slice' specified by fixing the first few coordinates. This includes assuming that $B_1(a)$ itself contains infinitely many points. The condition on $R$, with any nonzero $a\in R$, holds if $R$ is the field of algebraic numbers, the reals $\mbr$, or the complexes $\mbc$.

\begin{lemma}\label{L:solve}
Let $R$ be an integral domain  and suppose that $a\in R$ is  such that the open ball $B_1(a)$ is infinite and, for every integer $k\geq 1$  and every $(c_1,\ldots, c_k)\in B_k(a)$, there are infinitely many $c_{k+1}\in R$ for which $(c_1,\ldots, c_{k+1})\in B_{k+1}(a)$.   If $p \in R[X_1,\ldots, X_N]$ is zero on $B_N(a)$   then $p=0$.
\end{lemma}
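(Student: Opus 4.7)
The plan is a straightforward induction on $N$, using the standard fact that a nonzero polynomial in one indeterminate over an integral domain has only finitely many roots.

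For the base case $N = 1$, the hypothesis is that $B_1(a)$ is infinite, and $p \in R[X_1]$ vanishes on it. Since $R$ is an integral domain, a nonzero one-variable polynomial has at most $\deg p$ roots, so $p$ must be $0$.

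For the inductive step, suppose the lemma holds for polynomials in $N-1$ indeterminates. Given $p\in R[X_1,\ldots,X_N]$ vanishing on $B_N(a)$, write
\[
p = \sum_{j=0}^{d} p_j(X_1,\ldots,X_{N-1})\,X_N^{j},
\]
with each $p_j\in R[X_1,\ldots,X_{N-1}]$. I would fix an arbitrary $(c_1,\ldots,c_{N-1})\in B_{N-1}(a)$. By the hypothesis on $R$ and $a$, there are infinitely many $c_N\in R$ for which $(c_1,\ldots,c_N)\in B_N(a)$. For each such $c_N$, $p(c_1,\ldots,c_N)=0$; hence the one-variable polynomial $\sum_j p_j(c_1,\ldots,c_{N-1})\,X_N^{j}\in R[X_N]$ has infinitely many roots in $R$, forcing each coefficient $p_j(c_1,\ldots,c_{N-1})$ to vanish. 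Thus every $p_j$ is zero on $B_{N-1}(a)$, and the inductive hypothesis gives $p_j=0$, whence $p=0$.

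There is no real obstacle here; the lemma is a routine propagation argument. The only care needed is in verifying the base of the induction downward: the hypotheses require the chain $B_1(a), B_2(a), \ldots, B_N(a)$ to all be infinite (in the sense that each level has infinitely many extensions from the previous one), which is exactly what the stated condition on $R$ and $a$ guarantees, so the inductive step can be iterated all the way down to $N=1$.
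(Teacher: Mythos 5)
Your proof is correct and follows essentially the same route as the paper's: induction on $N$, with the base case resting on the finiteness of roots of a nonzero one-variable polynomial over an integral domain, and the inductive step specializing the first $N-1$ coordinates and using the infinitude of extensions to kill each $X_N$-coefficient on $B_{N-1}(a)$. No gaps.
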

 
\begin{proof}
The result holds for $N=1$ because a nonzero polynomial in $R[X]$ cannot have infinitely many zeros in $R$: 
 if $p$ is $0$ at distinct points $\alpha_1,\ldots, \alpha_m$ then, using the fact that $R$ is an integral domain and repeatedly applying the division algorithm, we have
$$p=(X-\alpha_1)\ldots(X-\alpha_m)q$$ for some polynomial $q\in R[X]$ and so $m$ can be at most the degree of $p$.  

Now suppose $N>1$.
Write $p$ as a polynomial in $X_N$ with coefficients in $R[X_1,\ldots, X_{N-1}]$:
$$p=\sum_{k=0}^mp_kX_N^k.$$
For each  $(c_1,\ldots, c_{N-1})\in B_{N-1}(a)$ the polynomial
 $p(c_1,\ldots, c_{N-1}, X_N)$ has infinitely many zeros, one for every point $c_N$ for which $(c_1,\ldots, c_N)\in B_N(a)$. Hence each coefficient $p_k$ is zero at each point of $B_{N-1}(a)$. Then by induction each polynomial $p_k$ is $0$, so $p$ is $0$.\end{proof}

\begin{prop}\label{P:pXN2a}
Let $R$ be an integral domain  and suppose that $a\in R$ is nonzero and such that the open ball $B_1(a)$ is infinite and, for every integer $k\geq 1$  and every $(c_1,\ldots, c_k)\in B_k(a)$, there are infinitely many $c_{k+1}\in R$ for which $(c_1,\ldots, c_{k+1})\in B_{k+1}(a)$.  Suppose also that $R$ does not have 
characteristic $2$. Let $p(X_1,\ldots, X_N)\in R[X_1,\ldots, X_N]$   be such that $p(c_1, \ldots, c_N)=0$ whenever $c_1^2+\cdots+c_N^2=a^2$. Then
\begin{equation}\label{E:pXN2a}
p(X_1,\ldots, X_N)=({\mbx}-a^2)q,
\end{equation}
for some $q\in R[X_1,\ldots, X_N]$.
\end{prop}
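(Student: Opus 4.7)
The plan is to perform Euclidean division by $\mbx - a^2$, viewed as a monic polynomial of $X_N$-degree $2$ with coefficients in $R[X_1, \ldots, X_{N-1}]$, and then use the geometry of the sphere to kill the resulting remainder. Write
\[
p = (\mbx - a^2)q + r_0 + r_1 X_N,
\]
with $q \in R[X_1, \ldots, X_N]$ and $r_0, r_1 \in R[X_1, \ldots, X_{N-1}]$, the remainder being forced to have $X_N$-degree at most $1$ since $\mbx - a^2$ has $X_N$-degree $2$. The goal is to prove $r_0 = 0$ and $r_1 = 0$, which gives (\ref{E:pXN2a}).

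Since both $p$ and $\mbx - a^2$ vanish on the sphere $\{(c_1,\ldots,c_N) \in R^N : c_1^2 + \cdots + c_N^2 = a^2\}$, so does $r_0 + r_1 X_N$. Assume first that $N \geq 2$. For each $(c_1, \ldots, c_{N-1}) \in B_{N-1}(a)$ there is, by definition of $B_{N-1}(a)$, some $y \neq 0$ in $R$ with $c_1^2 + \cdots + c_{N-1}^2 + y^2 = a^2$, so both $(c_1, \ldots, c_{N-1}, y)$ and $(c_1, \ldots, c_{N-1}, -y)$ lie on the sphere. Writing $c = (c_1, \ldots, c_{N-1})$, substitution yields
\[
r_0(c) + r_1(c)\, y = 0, \qquad r_0(c) - r_1(c)\, y = 0.
\]
Adding these and using that $2 \neq 0$ in the integral domain $R$ forces $r_0(c) = 0$; subtracting then gives $r_1(c)\, y = 0$, and since $y \neq 0$ and $R$ is an integral domain, $r_1(c) = 0$ as well.

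Therefore both $r_0$ and $r_1$ are polynomials in $R[X_1, \ldots, X_{N-1}]$ that vanish on all of $B_{N-1}(a)$. The hypotheses imposed on $R$ and $a$ are precisely those of Lemma \ref{L:solve}, and that lemma applies equally in any number of variables, so in $N - 1$ variables it forces $r_0 = r_1 = 0$. Hence $p = (\mbx - a^2) q$, which is (\ref{E:pXN2a}). The case $N = 1$ is separate but immediate: then $p \in R[X_1]$ satisfies $p(\pm a) = 0$, and since $a \neq -a$ (as $a \neq 0$ and $R$ does not have characteristic $2$), $p$ is divisible by $(X_1 - a)(X_1 + a) = \mbx - a^2$.

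I do not anticipate a serious obstacle: the division algorithm absorbs all the freedom in $q$ and reduces the problem to showing that a polynomial linear in $X_N$ that vanishes on the sphere must be identically zero. The two sphere points $(c, \pm y)$ lying over every $c \in B_{N-1}(a)$ are exactly what is needed to extract both $r_0(c) = 0$ and $r_1(c) = 0$, and Lemma \ref{L:solve} then closes the argument. The hypotheses that $R$ be an integral domain and have characteristic different from $2$ enter only at this final algebraic step.
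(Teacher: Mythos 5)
Your proposal is correct and follows essentially the same route as the paper: divide by the monic (in $X_N$) polynomial $\mbx - a^2$, evaluate the linear remainder at the two sphere points $(c,\pm y)$ lying over each $c\in B_{N-1}(a)$ to conclude $r_0(c)=r_1(c)=0$, and then invoke Lemma \ref{L:solve} in $N-1$ variables. The treatment of the base case $N=1$ also matches the paper's.
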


\begin{proof} For $N=1$, the hypotheses imply that $p(X_1)$ is divisible by $X_1-a$ and $X_1+a$, which are distinct because $2a\neq 0$. Hence (\ref{E:pXN2a}) holds. 

Suppose $N>1$. By the division algorithm by monic polynomials in the ring  of polynomials in $X_N$ with coefficients in $R[X_1,\ldots, X_{N-1}]$ we have
\begin{equation}\label{E:pdival}
\begin{split}
p(X_1,\ldots, X_N)&=\left(X_N^2 \,+ X_1^2+\cdots +X_{N-1}^2 -a^2\right)q\\
&\qquad +r_1(X_1,\ldots, X_{N-1})X_N+r_0(X_1,\ldots, X_{N-1})
\end{split}
\end{equation}
for some polynomials $q\in R[X_1,\ldots, X_N]$, and $ r_0, r_1\in R[X_1,\ldots, X_{N-1}]$.  We will show  that $r_0$ and $r_1$ are zero by using Lemma \ref{L:solve}.

Let $(c_1,\ldots, c_{N-1})\in B_{N-1}(a)$. Then there is a non-zero $t\in R$ such that
$$c_1^2+\cdots +c_{N-1}^2-a^2= -t^2.$$
Evaluating (\ref{E:pdival}) at $(c_1,\ldots, c_{N-1},t)$, and noting that the left side is $0$ by the assumption on $p$, we have
\begin{equation*}
0=r_1(c_1,\ldots, c_{N-1})t +r_0(c_1,\ldots, c_{N-1}).
\end{equation*}
This holds for a nonzero value of $t$ and  also for $-t\neq t$, since $R$ does not have characteristic $2$.  It follows that both $r_1$ and $r_0$ are $0$ when evaluated at $(c_1,\ldots, c_{N-1})$. Then by Lemma \ref{L:solve}, the polynomials $r_0$ and $r_1$ are both $0$.
\end{proof}

\section{Simultaneous eigenvectors for commuting generators}\label{s:decrot}

In this section we study the common eigenvectors in $R[X_1,\ldots,X_N]$  
of the commuting operators $M_{12}$, $M_{34}$, $\ldots$, $M_{2n-1, 2n}$, where 
$n$ is the largest integer for which $2n\leq N$.

Most, but not all, results in this section will use the hypothesis that in $R$, no 
positive integer multiples  of $1_R$ are zero divisors.  Some results will also use the assumption that $i=\sqrt{-1}\in R$. We will repeat these assumptions on $R$ in the statements of the results.

The following result gives the eigenvectors of each operator $M_{jk}$.

\begin{prop}\label{P:M12p}
Let $R$ be a ring containing  $i=\sqrt{-1}$. Then any polynomial $p\in R[X_1, X_2]$ of the form
\begin{equation}\label{E:peigenM12}
p=(X_1+i\varepsilon_1X_2)^{a_1}q(X_1^2+X_2^2),
\end{equation}
where $a_1$ is a non-negative integer, $\varepsilon_1\in\{\pm 1\}$, and  $q(T) \in R[T]$, satisfies
\begin{equation}\label{E:M12plambd}
M_{12}p=i\varepsilon_1a_1p.
\end{equation}
Conversely, suppose that $R$ is an integral  domain with $i = \sqrt{-1} \in R$ and $2$ is invertible in $R$. 
Then any eigenvector $p$ of $M_{12}$ is of the form $(\ref{E:peigenM12})$.  \end{prop}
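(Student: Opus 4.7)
The proof naturally splits into the two directions, with the second being the substantive one.

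For the sufficiency direction, the plan is to exploit that $M_{12}$ is a derivation. A one-line computation gives
\begin{equation*}
M_{12}(X_1 + i\varepsilon_1 X_2) = i\varepsilon_1 X_1 - X_2 = i\varepsilon_1(X_1 + i\varepsilon_1 X_2),
\end{equation*}
using $(i\varepsilon_1)^2 = -1$. Together with the identity $M_{12}(X_1^2 + X_2^2) = 0$ from (\ref{E:Mjkmbx}), the Leibniz rule yields $M_{12}\bigl((X_1 + i\varepsilon_1 X_2)^{a_1}\bigr) = i\varepsilon_1 a_1 (X_1 + i\varepsilon_1 X_2)^{a_1}$ and $M_{12}\bigl(q(X_1^2+X_2^2)\bigr) = 0$ for every $q(T) \in R[T]$; applying the product rule to the factorization in (\ref{E:peigenM12}) then gives (\ref{E:M12plambd}).

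For the converse, my plan is to change coordinates so as to diagonalize $M_{12}$. Since $i \in R$ and $2$ is invertible in $R$, setting $Y_1 = X_1 + iX_2$ and $Y_2 = X_1 - iX_2$ produces new $R$-algebra generators of $R[X_1,X_2] = R[Y_1,Y_2]$, with $Y_1 Y_2 = X_1^2 + X_2^2$. A short chain-rule computation, using $\partial_1 = \partial_{Y_1} + \partial_{Y_2}$ and $\partial_2 = i\partial_{Y_1} - i\partial_{Y_2}$, collapses $M_{12}$ to the Euler-type operator
\begin{equation*}
M_{12} = i\bigl(Y_1\partial_{Y_1} - Y_2\partial_{Y_2}\bigr),
\end{equation*}
so every monomial $Y_1^a Y_2^b$ is an eigenvector with eigenvalue $i(a-b)$.

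Writing a general $p$ as $p = \sum_{a,b \geq 0} c_{ab} Y_1^a Y_2^b$ and comparing coefficients in $M_{12}p = \lambda p$ forces $\bigl(i(a-b) - \lambda\bigr) c_{ab} = 0$ for every pair $(a,b)$. The integral-domain hypothesis then kills every $c_{ab}$ with $i(a-b) \neq \lambda$, and because $i$ is not a zero divisor the surviving $(a,b)$ all share one common value $k$ of $a-b$; setting $a_1 = |k|$ and letting $\varepsilon_1 \in \{\pm 1\}$ be the sign of $k$, the remaining monomials read $Y_1^{a_1}(Y_1 Y_2)^m$ if $\varepsilon_1 = 1$ and $Y_2^{a_1}(Y_1 Y_2)^m$ if $\varepsilon_1 = -1$. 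Translating back via $Y_1 Y_2 = X_1^2 + X_2^2$ and $Y_j = X_1 + i\varepsilon_1 X_2$ for the appropriate $j$ recovers the asserted form (\ref{E:peigenM12}).

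The main delicate steps are the change of variables itself, where all three hypotheses ($i\in R$, $2\in R^\times$, and $R$ an integral domain) are used in an essential way, and the extraction of a single value $k = a-b$ common to every nonzero monomial of $p$; the latter is where one must trade the integer computation against the algebraic condition that $i$ not be a zero divisor in $R$.
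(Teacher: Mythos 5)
Your proposal is correct and follows essentially the same route as the paper: both directions rest on expanding $p$ in the monomial basis $(X_1+iX_2)^b(X_1-iX_2)^c$ (your $Y_1^aY_2^b$), observing that these are eigenvectors of $M_{12}$ with eigenvalue $i(b-c)$, and using the integral-domain hypothesis to force a single value of $b-c$ across all surviving terms before regrouping via $Y_1Y_2 = X_1^2+X_2^2$. Your explicit rewriting of $M_{12}$ as $i(Y_1\partial_{Y_1}-Y_2\partial_{Y_2})$ is just a cleaner packaging of the paper's "straightforward computation," and your forward direction via the Leibniz rule matches the paper's (omitted) computation.
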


\begin{proof} Straightforward computation shows that any polynomial $p$ of the form 
(\ref{E:peigenM12}) satisfies the eigenvector equation (\ref{E:M12plambd}) with $\lambda=i \varepsilon_1 a_1$.

Now suppose $R$ is an integral domain containing $i$  in which $2$ is invertible.  Then every $p\in R[X_1, X_2]$ can be expressed uniquely in the form  
\begin{equation}\label{E:pabcd}
p=\sum_{b,c}k_{bc}(X_1+iX_2)^b(X_1-iX_2)^c,
\end{equation}
where $b, c$ run over all non-negative integers and $k_{bc}\in R$. By straightforward computation,
\begin{equation*}
M_{12}p =\sum_{b,c} i(b-c)k_{bc}(X_1+iX_2)^b(X_1-iX_2)^c.
\end{equation*}
Now suppose $p$ is an eigenvector:
$$M_{12}p=\lambda p.$$
Since $R$ is an integral domain, we have
\begin{equation*}
\lambda=i(b-c)
\end{equation*}
whenever $k_{bc}\neq 0$. We focus now only on those $b, c$ for which $k_{bc}\neq 0$. The value
$$a=b-c$$
is the same for all such pairs $(b,c)$.  If $a\geq 0$ then
\begin{equation*}\label{E:abcd1}
(X_1+iX_2)^b(X_1-iX_2)^c = (X_1+iX_2)^{a}(X_1^2+X_2^2)^c,
\end{equation*}
whereas if $a\leq 0$ then
\begin{equation*}\label{E:abcd2}
(X_1+iX_2)^b(X_1-iX_2)^c=(X_1-iX_2)^{-a}(X_1^2+X_2^2)^b.
\end{equation*}
Thus    (\ref{E:pabcd}) is a sum of  terms of the form $k(X_1+ i\varepsilon_1X_2)^{a_1}(X_1^2+X_2^2)^d$, where $a_1=|a|\geq 0$, 
$\varepsilon_1$ is the sign of $a$ (if $a=0$ we can just set $\varepsilon_1=+1$ for definiteness), and $d \geq 0$.  
The eigenvalue for $p$ is $\lambda = ia = i\varepsilon_1a_1$.
 \end{proof}

Now we can readily obtain eigenvectors that are common to the operators $M_{12}, M_{34}, \ldots, M_{2n-1, 2n}$, which are also eigenvectors  of ${\mbmn}$. 

\begin{prop}\label{P:commoneigen} 
Let $R$ be a ring containing $i=\sqrt{-1}$ and let $N$ be an integer $\geq 2$.  
Let $n$ be the largest integer $\leq N/2$. For  $\varepsilon_1,\ldots, \varepsilon_n \in\{\pm 1\}$ and $a_1,\ldots, a_n$  non-negative integers, set
\begin{equation}\label{E:Yea1}
Y_{\varepsilon,a}=  (X_1+i\varepsilon_1X_{2})^{a_1}\ldots  (X_{2n-1}+i\varepsilon_nX_{2n})^{a_n},
\end{equation}
where we use $\varepsilon$ to denote $(\varepsilon_1,\ldots, \varepsilon_n)$, and $a=(a_1,\ldots, a_n)$.
Then
\begin{equation}\label{E:MYea}
M_{2j-1,  2j}(Y_{\varepsilon,a}) =i \varepsilon_ja_jY_{\varepsilon,a},
\end{equation}
for $j\in\{1, \ldots, n \}$. 

Conversely,  suppose $R$ is an integral domain containing $i$ and $2$ is invertible in $R$. Then every $Y\in\mcp_{N}$ that is an eigenvector of $M_{12}, M_{34}$, ..., $M_{2n-1 \, 2n}$   is of the form
\begin{equation}\label{E:YYea}
Y=Y_{\varepsilon,a}q(X_1^2+X_2^2,\ldots, X_{2n-1}^2+X_{2n}^2)
\end{equation}
for some $\varepsilon$ and $a$ as above, and some $q\in R_*[T_1,\ldots, T_n]$, where $R_*=R$ if $2n=N$ and $R_*=R[X_{2n+1}]$ if $2n+1=N$.  In particular, $Y$ is an $R$-multiple of  $Y_{\varepsilon,a}$ if $Y$ is homogeneous of degree $|a|=a_1+\cdots +a_n$. 
Moreover,
\begin{equation*}\label{E:Yea2}
Y_{\varepsilon, a}\in\ker \Delta_{N,|a|},
\end{equation*}
and $Y_{\varepsilon, a}$ is an eigenvector of ${\mbmn}$.
\end{prop}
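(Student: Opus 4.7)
The plan is to handle the four claims in the proposition—the eigenvalue formula (\ref{E:MYea}), the harmonicity $Y_{\varepsilon,a}\in\ker\Delta_{N,|a|}$, the $\mbmn$-eigenvector property, and the converse characterization (\ref{E:YYea})—by separate arguments; the first three are short computations and the converse is an induction on $n$.

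For (\ref{E:MYea}), since $M_{2j-1,2j}$ is a derivation annihilating every $X_k$ with $k\notin\{2j-1,2j\}$, it kills every factor of $Y_{\varepsilon,a}$ except the $j$-th, reducing the calculation to $M_{2j-1,2j}\bigl((X_{2j-1}+i\varepsilon_j X_{2j})^{a_j}\bigr)$; this is handled by Proposition \ref{P:M12p} with the trivial choice $q\equiv 1$ and yields eigenvalue $i\varepsilon_j a_j$. For harmonicity I would check that the linear form $L_j=X_{2j-1}+i\varepsilon_j X_{2j}$ satisfies $\Delta L_j^{a_j}=0$ because $(\partial_{2j-1}^2+\partial_{2j}^2)L_j^{a_j}=a_j(a_j-1)L_j^{a_j-2}(1+(i\varepsilon_j)^2)=0$ and all other partials vanish. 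Since the $L_j$'s depend on pairwise disjoint sets of indeterminates, every cross term in the product formula (\ref{E:Deltprod}) is zero, and induction on $n$ gives $\Delta Y_{\varepsilon,a}=0$. The identity (\ref{magicformula}) applied to the homogeneous harmonic polynomial $Y_{\varepsilon,a}$ of degree $|a|$ then gives
\begin{equation*}
\mbmn Y_{\varepsilon,a}=-\bigl((r\partial_r)^2+(N-2)r\partial_r\bigr)Y_{\varepsilon,a}=-|a|(|a|+N-2)Y_{\varepsilon,a},
\end{equation*}
settling the $\mbmn$-eigenvector claim.

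For the converse (\ref{E:YYea}) I would induct on $n$. Setting $R'=R[X_3,\ldots,X_N]$, one checks that $R'$ is an integral domain containing $i$ in which $2$ is invertible, so applying Proposition \ref{P:M12p} to $Y$ viewed as an element of $R'[X_1,X_2]$ gives
\begin{equation*}
Y=(X_1+i\varepsilon_1 X_2)^{a_1}\sum_k c_k(X_1^2+X_2^2)^k,\qquad c_k\in R[X_3,\ldots,X_N],
\end{equation*}
with eigenvalue $i\varepsilon_1 a_1$. Each $M_{2j-1,2j}$ for $j\geq 2$ annihilates $X_1$ and $X_2$, hence also $(X_1+i\varepsilon_1 X_2)^{a_1}$ and every $(X_1^2+X_2^2)^k$, so it acts coefficientwise in this expansion; $R'$-linear independence of the powers $(X_1^2+X_2^2)^k$ (distinct leading powers of $X_1$) then forces each nonzero $c_k$ to be a common eigenvector of $M_{34},\ldots,M_{2n-1,2n}$ in $R[X_3,\ldots,X_N]$ with the same eigenvalues as $Y$. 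The inductive hypothesis, after reindexing variables, extracts a common factor $\prod_{j=2}^n(X_{2j-1}+i\varepsilon_j X_{2j})^{a_j}$ from each nonzero $c_k$, and factoring it out yields (\ref{E:YYea}). The case of homogeneous $Y$ of degree $|a|$ is then immediate: $Y_{\varepsilon,a}$ already has degree $|a|$, so $q$ must be homogeneous of degree $0$ in all variables, hence $q\in R$.

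The main obstacle is the bookkeeping in the inductive step: one must argue that all nonzero $c_k$ yield the same tuple $(\varepsilon_2,\ldots,\varepsilon_n,a_2,\ldots,a_n)$. This is forced because all nonzero $c_k$ share the $M_{2j-1,2j}$-eigenvalue of $Y$, and the relation $\mu_j=i\varepsilon_j a_j$ determines $(\varepsilon_j,a_j)$ uniquely once the convention $\varepsilon_j=+1$ whenever $a_j=0$ is fixed.
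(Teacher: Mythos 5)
Your proposal is correct and follows essentially the same route as the paper: the eigenvalue identity and harmonicity by direct computation, the $\mbmn$-eigenvector claim via the identity (\ref{magicformula}) applied to a homogeneous harmonic polynomial, and the converse by applying Proposition \ref{P:M12p} over $R[X_3,\ldots,X_N]$ and inducting on the number of pairs. Your write-up merely supplies details the paper leaves implicit (the coefficientwise action of $M_{2j-1,2j}$ for $j\geq 2$, the linear independence of the powers of $X_1^2+X_2^2$, and the consistency of the eigen-data across the coefficients $c_k$), which is a faithful elaboration rather than a different argument.
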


\begin{proof} 
The identity (\ref{E:MYea}) is readily verified by computation. For the converse we apply Proposition \ref{P:M12p} with $R[X_3,\ldots, X_N]$ in place of $R$ and use induction. This leads to
\begin{equation}
Y=(X_1+i\varepsilon_1X_2)^{a_1}\ldots (X_{2n-1}+i\varepsilon_nX_{2n})^{a_n}q,
\end{equation}
where $q$ is a polynomial in $X_1^2+X_2^2,\ldots, X_{2n-1}^2+X_{2n}^2$, with coefficients in $R_*$.  If $|a|=d$ then  $q$ must be a constant, an element of $R$. 

It is readily verified that $Y_{\varepsilon, a}$ is harmonic. Since it is also homogeneous, it follows by Proposition \ref{P:laplace} that it is an eigenvector of ${\mbmn}$ as well. 
\end{proof}

 In Proposition \ref{P:commoneigen}  we  did not determine any specific form of the polynomial $q$. Next we obtain a precise description of all harmonic polynomials that are common eigenvectors of the operators $M_{12}, \ldots, M_{2n-1, 2n}$, where $n=[N/2]$.

  \begin{prop}\label{P:Harmpq}
 Suppose $R$ is an integral domain containing $i$ and $2$ is invertible in $R$.  Let $N=2n$ be a positive even integer.
  
  Let $p\in R[X_1,\ldots, X_N]$ be a harmonic polynomial, homogeneous of degree $d$, and suppose $p$ is an eigenvector for the operators $M_{12}$, $M_{34}$, \ldots, $M_{2n-1 \, 2n}$. Then there exist  non-negative integers $a_1,\ldots, a_n$, and $\varepsilon_1,\ldots, \varepsilon_n\in\{\pm 1\}$ such that
   \begin{equation}\label{E:M12p}
  M_{12}p=i\varepsilon_1a_1p,\quad M_{34}p=i\varepsilon_2a_2p,\ldots, M_{2n-1 \, 2n}p=i\varepsilon_na_np,
  \end{equation}
  and 
  \begin{equation}\label{E:pqYe}
  p=Y_{\varepsilon, a}q(X_1^2+X_2^2,\ldots, X_{2n-1}^2+X_{2n}^2),
  \end{equation}
  where $q(T_1,\ldots, T_n)\in R[T_1,\ldots, T_n]$ satisfies
  \begin{equation}\label{E:Tq}
   \sum_{j=1}^n(T_j\partial_j^2+(a_j+1)\partial_j)q(T_1,\ldots, T_n)=0,
   \end{equation}
   where $\partial_j=\partial_{T_j}$.
   
   Conversely, if $p\in R[X_1,\ldots, X_N]$ is homogeneous of degree $d$ and satisfies $(\ref{E:pqYe})$ for some non-negative integers $a_1,\ldots, a_n$, some $\varepsilon_1,\ldots, \varepsilon_n \in\{\pm 1\}$, and some $q\in R[T_1,\ldots, T_n]$ satisfying $(\ref{E:Tq})$ then $p\in \ker \Delta_{N,d}$ and $p$ satisfies the eigenvalue relations $(\ref{E:M12p})$.
     \end{prop}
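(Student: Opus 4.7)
\medskip

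\noindent\textbf{Proof proposal.} The plan is to establish both directions by a direct computation of $\Delta_N$ applied to $Y_{\varepsilon,a}\,q(T_1,\ldots,T_n)$, where I write $T_k = X_{2k-1}^2 + X_{2k}^2$ throughout. For the forward direction, I would first invoke Proposition \ref{P:commoneigen}: since $p$ is a common eigenvector of $M_{12},M_{34},\ldots,M_{2n-1,\,2n}$ and $N=2n$, there exist $\varepsilon_j\in\{\pm 1\}$ and non-negative integers $a_j$ with $p = Y_{\varepsilon,a}\,q(T_1,\ldots,T_n)$ for some $q \in R[T_1,\ldots,T_n]$, and the eigenvalue relations (\ref{E:M12p}) hold automatically from the proposition. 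It then remains to translate the hypothesis $\Delta_N p = 0$ into the differential equation (\ref{E:Tq}) for $q$.

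The key computational step is the following. Using the product formula (\ref{E:Deltprod}), I would write
\[
\Delta_N(Y_{\varepsilon,a}\,q) = (\Delta_N Y_{\varepsilon,a})\,q + 2\sum_{j=1}^N (\partial_j Y_{\varepsilon,a})(\partial_j q) + Y_{\varepsilon,a}\,\Delta_N q.
\]
For each factor $(X_{2k-1}+i\varepsilon_k X_{2k})^{a_k}$ a direct calculation gives $\partial_{2k-1}^2 + \partial_{2k}^2 = 0$ when applied to it (the $i\varepsilon_k$ contributes $(i\varepsilon_k)^2 = -1$), so $\Delta_N Y_{\varepsilon,a}=0$. For the cross term I would group by pairs $(2k-1,2k)$: since $\partial_{2k-1}q = 2X_{2k-1}\,\partial_{T_k}q$ and $\partial_{2k}q = 2X_{2k}\,\partial_{T_k}q$, together with $\partial_{2k-1}Y_{\varepsilon,a}$ and $\partial_{2k}Y_{\varepsilon,a}$ carrying the factor $(X_{2k-1}+i\varepsilon_k X_{2k})^{a_k-1}$ times $a_k$ and $i\varepsilon_k a_k$, one checks that the $k$-th pair contributes $4a_k\, Y_{\varepsilon,a}\,\partial_{T_k}q$. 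Finally, $\Delta_N q = 4\sum_k\bigl(T_k\,\partial_{T_k}^2 q + \partial_{T_k} q\bigr)$ by the same elementary chain-rule computation. Assembling these gives
\[
\Delta_N(Y_{\varepsilon,a}\,q) \;=\; 4\,Y_{\varepsilon,a}\sum_{k=1}^n\bigl(T_k\,\partial_{T_k}^2 q + (a_k+1)\,\partial_{T_k} q\bigr).
\]

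From this identity the forward direction follows because $Y_{\varepsilon,a}$ is a product of nonzero linear forms in the integral domain $R[X_1,\ldots,X_N]$, hence not a zero divisor; and the substitution $T_k \mapsto X_{2k-1}^2+X_{2k}^2$ is injective on $R[T_1,\ldots,T_n]$ because the $n$ quadratic forms involve disjoint pairs of indeterminates and so are algebraically independent. Therefore the polynomial in $T_1,\ldots,T_n$ on the right must itself vanish, which is exactly (\ref{E:Tq}). The converse direction is immediate from the same identity: if $q$ satisfies (\ref{E:Tq}) then $\Delta_N p = 0$, while the eigenvalue relations (\ref{E:M12p}) hold because each $M_{2j-1,\,2j}$ annihilates every $T_k$ (by (\ref{E:Mjka}) applied in the two variables $X_{2k-1},X_{2k}$) and acts on $Y_{\varepsilon,a}$ with eigenvalue $i\varepsilon_j a_j$ by (\ref{E:MYea}), so the Leibniz rule for the derivation $M_{2j-1,\,2j}$ gives $M_{2j-1,\,2j}(Y_{\varepsilon,a}\,q) = i\varepsilon_j a_j\,Y_{\varepsilon,a}\,q$.

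The only genuinely delicate point is the Laplacian calculation above; once that is carried out, everything else is bookkeeping. I expect the main obstacle to be organizing the cross-term $\sum_j (\partial_j Y_{\varepsilon,a})(\partial_j q)$ cleanly, since it is where the factor $(a_k+1)$ rather than simply $1$ arises in (\ref{E:Tq}).
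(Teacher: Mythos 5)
Your proposal is correct and follows essentially the same route as the paper: the paper's proof consists precisely of the identity $\Delta(Y_{\varepsilon,a}\,q) = 4\,Y_{\varepsilon,a}\,(Lq)(X_1^2+X_2^2,\ldots,X_{2n-1}^2+X_{2n}^2)$ obtained from the product formula (\ref{E:Deltprod}), which is exactly the computation you outline, with the form of $p$ coming from Proposition \ref{P:commoneigen}. You in fact supply details the paper leaves implicit, namely the term-by-term evaluation of the cross term and the observation that $Y_{\varepsilon,a}$ is not a zero divisor and that the substitution $T_k\mapsto X_{2k-1}^2+X_{2k}^2$ is injective, which is needed to pass from the vanishing of $Y_{\varepsilon,a}\,(Lq)(\ldots)$ to $Lq=0$.
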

  
\begin{proof} By  lengthy but straightforward computation, using the product formula (\ref{E:Deltprod}), we have
\begin{equation}\label{E:DelQL}
\Delta p= 4Y_{\varepsilon, a}Lq(X_1^2+X_2^2,\ldots, X_{2n-1}^2+X_{2n}^2),
\end{equation}
if $p$ is of the form (\ref{E:pqYe}), and 
\begin{equation}\label{E:defL}
L= \sum_{j=1}^n(T_j\partial_j^2+(a_j+1)\partial_j).
\end{equation}
Thus $p$ is harmonic if and only if $q\in\ker L$. 
\end{proof} 

In the special case $N=2$, we can work out the condition for $q$ to be in $\ker L$, and it shows that $q$ is of degree $0$. Thus 
the form of $p$ given in (\ref{E:pqYe}) reduces to just
\begin{equation*}\label{E:pqYeN2}
    p=q_0(X_1\pm iX_2)^a
\end{equation*}
for $N=2$, $q_0\in R$, and $a\in\{0,1,2,\ldots\}$.

Let us assume for simplicity that $R$ is a field and contains $\mbq$ and $i$. From 
(\ref{E:dimmch2}) the dimension of $\mch_{2,d}$ over $R$ 
is $2$, and so the two elements $(X_1\pm iX_2)^d$ form a basis of $\mch_{2,d}$, as we already saw in 
Example \ref{ex2}.

 The following result is along the lines of Proposition \ref{P:harmcons}. We use the notation $R[T_1,\ldots, T_n]_d$ for the $R$-module
  of all homogeneous polynomials  in $T_1$, \ldots, $T_n$ of degree $d$, with coefficients in $R$. For $d=-1$ we define it to be $\{0\}$.
 
 \begin{prop}\label{P:harmcons2}
Suppose $R$ is a ring containing $\mbq$, and $n$, $d$, and $a_1,\ldots, a_n$ are non-negative integers. Let $L_d$ be the operator
\begin{equation}\label{E:defLd}
L_d= \sum_{j=1}^n(T_j\partial_j^2+(a_j+1)\partial_j):R[T_1,\ldots, T_n]_d \to R[T_1,\ldots, T_n]_{d-1},
\end{equation}
where   $\partial_j=\partial_{T_j}$.  Then  a polynomial $q$ in $R[T_1,\ldots, T_{n}]_d$, written as
\begin{equation*}\label{E:pp0pd2}
q=q_0T_n^d+\cdots +q_{d-1}T_n+q_d,
\end{equation*}
is in $\ker L_d$ if and only if the coefficients $q_0,\ldots, q_{d-1}  \in R[T_1,\ldots, T_{n-1}]$ are related to $q_d\in R[T_1,\ldots, T_{n-1}]$ by
\begin{equation}\label{E:kerDrec2}
q_{d-k}=(-1)^k\frac{1}{k!(1+a_n)\ldots (k+a_n)}L_d^kq_d
\end{equation}
for all integers $k\in \{0, 1,\ldots, d\}$. In particular, the mapping
\begin{equation*}\label{E:kerL}
\ker L_d \to R[T_1,\ldots, T_{n-1}]_d: q\mapsto q_d
\end{equation*}
is an $R$-linear isomorphism.  In the case $n=1$, $\ker L_d=0$ if $d\geq 1$ and $\ker L_0=R$. 
\end{prop}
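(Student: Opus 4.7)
The plan is to expand $L_d q$ with respect to the decomposition $q = \sum_{j=0}^d q_j T_n^{d-j}$ and read off a recursion relating consecutive coefficients. I would split $L_d = L' + (T_n\partial_n^2 + (a_n+1)\partial_n)$, where $L' = \sum_{j=1}^{n-1}(T_j\partial_j^2 + (a_j+1)\partial_j)$ involves only $T_1,\ldots,T_{n-1}$. Since $q$ is homogeneous of degree $d$, each $q_j$ is homogeneous of degree $j$ in $T_1,\ldots,T_{n-1}$; in particular $q_0 \in R$. Using $(T_n\partial_n^2 + (a_n+1)\partial_n)T_n^{d-j} = (d-j)(d-j+a_n)T_n^{d-j-1}$, a short calculation yields
\begin{equation*}
L_d q = \sum_{j=0}^d (L'q_j)\,T_n^{d-j} + \sum_{j=0}^{d-1}(d-j)(d-j+a_n)\,q_j\,T_n^{d-j-1}.
\end{equation*}

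Equating coefficients of each power of $T_n$ in $L_d q = 0$ gives $L'q_0 = 0$ (automatic since $q_0 \in R$) and the recursion
\begin{equation*}
L'q_j = -(d-j+1)(d-j+1+a_n)\,q_{j-1} \quad \text{for } j = 1,\ldots,d.
\end{equation*}
Because $R \supset \mbq$, the factors $k(k+a_n)$ with $k \geq 1$ are invertible, so setting $k = d-j+1$ inverts this to $q_{d-k} = -\tfrac{1}{k(k+a_n)}L'q_{d-k+1}$; iteration from $q_d$ yields (\ref{E:kerDrec2}), where $(L')^k q_d$ coincides with $L_d^k q_d$ because $q_d$ does not involve $T_n$. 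Conversely, defining $q_{d-k}$ from a given $q_d \in R[T_1,\ldots,T_{n-1}]_d$ by (\ref{E:kerDrec2}) produces polynomials $q_{d-k}$ of the correct degree $d-k$, so the assembled polynomial is homogeneous of degree $d$ in $T_1,\ldots,T_n$; direct substitution shows the displayed recursion holds, hence $L_d q = 0$. Therefore $q \mapsto q_d$ is an $R$-linear isomorphism $\ker L_d \to R[T_1,\ldots,T_{n-1}]_d$.

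For $n = 1$ the ``empty'' indeterminate set makes $R[T_1,\ldots,T_{n-1}]$ equal to $R$, so $R[T_1,\ldots,T_{n-1}]_d$ equals $R$ when $d = 0$ and $\{0\}$ when $d \geq 1$, which gives the last assertion. The computation is essentially bookkeeping: the main point to verify carefully is that the factor $(-1)^k/(k!(1+a_n)\cdots(k+a_n))$ in (\ref{E:kerDrec2}) telescopes correctly under iterated inversion of the recursion, and that converting between the index $j$ of the coefficient $q_j$ and the index $k = d-j+1$ appearing in the statement does not introduce any shift.
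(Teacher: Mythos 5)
Your proposal is correct and follows essentially the same route as the paper's proof: both expand $L_dq$ in powers of $T_n$ using the fact that $L$ acts as a derivation across disjoint sets of indeterminates, read off the recursion $Lq_{d-k+1} = -k(k+a_n)q_{d-k}$ from the vanishing of each coefficient, and invert it using that $k(k+a_n)$ is invertible in $R \supset \mbq$. The bookkeeping you flag (the telescoping of the constants and the index shift $k = d-j+1$) checks out against (\ref{E:kerDrec2}).
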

\begin{proof} Although $L$ is a second-order differential operator it satisfies the product rule of first-order differential operators when applied to polynomials that depend on different sets of indeterminates:
\begin{equation*}\label{E:Lleib}
 \hbox{
$L(fg)=(Lf)g+f(Lg)$ if $f\in R[T_1,\ldots, T_m]$ and $g\in R[T_{m+1},\ldots, T_n]$.}
\end{equation*}
Applying $L$ to $q$, and using  the above property, we obtain
\begin{equation*}
\begin{split}
  L q  
 &=\left[ d(d+a_n)q_0+L q_1\right]T_n^{d-1} + \left[(d-1)(d+a_n-1)q_1 +L q_2\right]T_n^{d-2} \\
 &\qquad + \left[(d-2)(d+a_n-2)q_2 +L q_3\right]T_n^{d-3}  +\cdots \\
 &\qquad\qquad +\left[2(2+a_{n})q_{d-2}+Lq_{d-1} \right] T_n  +(1+a_n)q_{d-1}+Lq_d
\end{split}
\end{equation*}
So $q\in\ker L$ if and only if  
\begin{equation*}
q_{d-k}=(-1)^k\frac{1}{k!(1+a_n)\ldots (k+a_n)}L^kq_d
 \end{equation*}
for $k$ running up from $1$ to $d$.
 \end{proof}
 
 As we noted in (\ref{E:DelQL}),  if a polynomial $p\in R[X_1,\ldots, X_N]$ is of the form
 \begin{equation}\label{E:pqX12a}
 p=q(X_1^2+X_2^2,\ldots, X_{2n-1}^2+X_{2n}^2)\prod_{j=1}^n(X_{2j-1}\pm i X_{2j})^{a_j},
 \end{equation}
 where $q$  is a polynomial in $ R[T_1,\ldots, T_n]$,
 then
 \begin{equation}
 \Delta p= 4(Lq)(X_1^2+X_2^2,\ldots, X_{2n-1}^2+X_{2n}^2)\prod_{j=1}^n(X_{2j-1}\pm i X_{2j})^{a_j},
 \end{equation}
 where on the right we have the evaluation of the polynomial $Lq$ with the indeterminates set to $(X_1^2+X_2^2,\ldots, X_{2n-1}^2+X_{2n}^2)$.
 
 Thus the $R$-module of all  harmonic polynomials $p\in \ker\Delta_{N,d}$ that are of the form (\ref{E:pqX12a}) is  isomorphic to $\mcp_{n, (d-|a|)/2}$. 
 
 Next we have a counterpart of Proposition \ref{P:Harmpq} for odd values of $N$.
 
 \begin{prop}\label{P:Harmpq2}
 Suppose $R$ is an integral domain containing $i=\sqrt{-1}$ and   $2$ is invertible in $R$.  Let $N=2n+1$ be a positive odd integer, with $n\geq 1$.
  
  Let $p\in R[X_1,\ldots, X_N]$ be a harmonic polynomial, homogeneous of degree $d$, and suppose $p$ is an eigenvector for the operators $M_{12}$, $M_{34}$, \ldots, $M_{2n-1 \, 2n}$. Then there exist  non-negative integers $a_1,\ldots, a_n$, and $\varepsilon_1,\ldots, \varepsilon_n\in\{\pm 1\}$ such that
   \begin{equation}\label{E:M12p2}
  M_{12}p=i\varepsilon_1a_1p,\quad M_{34}p=i\varepsilon_2a_2p,\ldots, M_{2n-1 \,  2n}p=i\varepsilon_na_np,
  \end{equation}
  and 
  \begin{equation}\label{E:pqYe2}
  p=Y_{\varepsilon, a}q(X_1^2+X_2^2,\ldots, X_{2n-1}^2+X_{2n}^2, X_N),
  \end{equation}
  where $q(T_1,\ldots, T_n, Y)\in R[T_1,\ldots, T_n, Y]$ is a homogeneous polynomial of the form
  \begin{equation}\label{E:qTY}
  q(T_1,\ldots, T_n, Y)=\sum_{k=0}^{d-|a|}q_k(T_1,\ldots, T_n)Y^k,
  \end{equation}
  where $q_k(T_1,\ldots, T_n)\in R[T_1,\ldots, T_n]$ is homogeneous  for each $k$, and
    \begin{equation}\label{E:Tq2}
  (k+2)(k+1)q_{k+2} +4Lq_k=0,   \end{equation}
  for all $k\in\{0,1,\ldots, d-|a|-k\}$.  If $d-|a|$ is even then $q_0$ is of degree $(d-|a|)/2$ and $q_1$ is $0$. If $d-|a|$ is odd then $q_0=0$ and $q_1$ has degree $(d-|a|-1)/2$.
  
   Conversely, if $p\in R[X_1,\ldots, X_N]$ is homogeneous of degree $d$ and satisfies $(\ref{E:pqYe2})$ and  $(\ref{E:qTY})$ for some non-negative integers $a_1,\ldots, a_n$, some $\varepsilon_1,\ldots, \varepsilon_n \in\{\pm 1\}$ , and some $q\in R[T_1,\ldots, T_n, Y]$ as in $(\ref{E:Tq2})$, then $p\in \ker \Delta_{N,d}$ and $p$ satisfies the eigenvalue relations $(\ref{E:M12p2})$.
     \end{prop}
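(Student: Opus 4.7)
The plan is to reduce the odd-dimensional case $N = 2n+1$ to the even-dimensional result of Proposition \ref{P:Harmpq} by exploiting the decomposition $\Delta_{2n+1} = \Delta_{2n} + \partial_N^2$ and carrying $X_N$ along as an extra variable throughout the calculation.

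First, I would apply Proposition \ref{P:commoneigen} with $N = 2n+1$, so that $R_* = R[X_N]$. Any common eigenvector of $M_{12}, M_{34}, \ldots, M_{2n-1, 2n}$ then has the form $p = Y_{\varepsilon, a}\tilde q$, where $\tilde q$ is obtained from some $q(T_1, \ldots, T_n, Y) \in R[T_1, \ldots, T_n, Y]$ by substituting $T_j = X_{2j-1}^2 + X_{2j}^2$ and $Y = X_N$; the eigenvalue identities (\ref{E:M12p2}) are then immediate from (\ref{E:MYea}). Writing $q = \sum_{k \geq 0} q_k(T_1, \ldots, T_n) Y^k$, the homogeneity of $p$ of degree $d$ (with $Y_{\varepsilon, a}$ of degree $|a|$) forces each nonzero $q_k$ to be homogeneous of degree $(d - |a| - k)/2$ in $T_1, \ldots, T_n$, so $q_k = 0$ whenever $d - |a| - k$ is odd or negative. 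This yields the parity statements ($q_0 = 0$ when $d - |a|$ is odd, $q_1 = 0$ when $d - |a|$ is even) and the index bound $k \leq d - |a|$.

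Next, I would compute $\Delta p$ using the split $\Delta_{2n+1} = \Delta_{2n} + \partial_N^2$. Applying $\Delta_{2n}$ coefficient-by-coefficient in $X_N$ and invoking the identity (\ref{E:DelQL}) from the proof of Proposition \ref{P:Harmpq} (which is valid term-by-term, since $\partial_1, \ldots, \partial_{2n}$ do not touch $X_N$) yields
\begin{equation*}
\Delta_{2n}(Y_{\varepsilon, a}\tilde q) = 4 Y_{\varepsilon, a} \sum_k (L q_k)(T_1, \ldots, T_n)\, X_N^k,
\end{equation*}
with $L$ as in (\ref{E:defL}). Meanwhile $\partial_N^2(Y_{\varepsilon, a}\tilde q) = Y_{\varepsilon, a} \sum_k (k+2)(k+1)\, q_{k+2}(T_1, \ldots, T_n)\, X_N^k$. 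Since $R$ is an integral domain, so is $R[X_1, \ldots, X_N]$, and therefore $Y_{\varepsilon, a}$ (a nonzero product) is not a zero divisor. The identity $\Delta p = 0$ then becomes, after equating coefficients of powers of $X_N$, the recursion
\begin{equation*}
(k+2)(k+1)\, q_{k+2} + 4 L q_k = 0,
\end{equation*}
which is exactly (\ref{E:Tq2}). The converse is the same calculation run in reverse: if $q$ satisfies (\ref{E:Tq2}) then the computation shows $\Delta p = 0$, while the eigenvalue relations (\ref{E:M12p2}) come directly from (\ref{E:MYea}).

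The main obstacle is the bookkeeping in the Laplacian computation: verifying that the even-dimensional identity (\ref{E:DelQL}) continues to apply term-by-term when $q$ depends additionally on $X_N$, so that one can extract a clean recursion by simultaneously separating coefficients in two expansions --- the one in powers of $X_N$ and the one coming from the split $\Delta_{2n+1} = \Delta_{2n} + \partial_N^2$. Once the coefficient matching is organized carefully, the rest of the argument is routine.
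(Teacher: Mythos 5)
Your proposal is correct and takes essentially the same route as the paper: the paper's proof just replaces the even-dimensional identity (\ref{E:DelQL}) by (\ref{E:DelQL2}), whose coefficients $s_k = 4Lq_k + (k+2)(k+1)q_{k+2}$ are precisely what your split $\Delta_{2n+1} = \Delta_{2n} + \partial_N^2$ produces. You have simply made explicit the bookkeeping (the term-by-term application of (\ref{E:DelQL}) in the $X_N$-expansion, the non-zero-divisor role of $Y_{\varepsilon,a}$, and the homogeneity/parity analysis) that the paper leaves to the reader.
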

     
     \begin{proof} The argument is similar to the proof of  Proposition \ref{P:Harmpq} except that in place of (\ref{E:DelQL}) we use
     \begin{equation}\label{E:DelQL2}
      \Delta\Bigl( Y_{\varepsilon, a} q(X_1^2+X_2^2,\ldots, X_{2n-1}^2+X_{2n}^2, X_N)\Bigr)= Y_{\varepsilon, a} \sum_{k=0}^{d-|a|} s_kX_N^k,
      \end{equation}
      where
     \begin{equation} 
  s_k=
   4Lq_k(X_1^2+X_2^2,\ldots, X_{2n-1}^2+X_{2n}^2) + (k+2)(k+1)q_{k+2}.     \end{equation}

     \end{proof}

 \section{Spherical means}\label{s:sphmea}
 
 In this section we prove an algebraic counterpart (Corollary \ref{harmonic0} below) 
 of the classical analytic result that the mean of a harmonic function over a sphere equals 
 the value of the function at the center of the sphere.
 
 We would like to identify  the integral of a polynomial over a sphere centered at $0$ by using purely algebraic notions. Let $\lambda$ be such an integral, viewed as a linear functional on
  the space of polynomials. Applying $M_{jk}$ to a polynomial $q$  is the effect of taking the derivative of a one-parameter group of  rotations of $q$ in the $X_j$-$X_k$-plane. Thus $\lambda(M_{jk}q)$ should be $0$, since the spherical integral would remain  invariant under rotations. Thus, for any integer $N\geq 2$, we define a {\em spherical mean} to be an $R$-linear map
 \begin{equation*}\label{E:lambd}
 \lambda \colon R[X_1,\ldots, X_N]\to R
 \end{equation*}
 for which
 \begin{equation*}\label{E:kerlambd2}
 \lambda\circ M_{jk}=0\qquad\hbox{for all $j,k\in\{1,\ldots, N\}$ with $j \not= k$.}
 \end{equation*}
 For any polynomials $p$ and $q$, $M_{jk}(pq) = pM_{jk}(q) + qM_{jk}(p)$, so applying $\lambda$ to $M_{jk}(pq)$ shows that 
 \begin{equation*}\label{lambpq}
 \lambda(pM_{jk}q)=-\lambda\left(qM_{jk}p\right).
 \end{equation*}
 
 For $N=1$, when there are no nonzero operators $M_{jk}$, we require instead that $\lambda$ vanish on $X_1^n$ for all odd integers $n\geq 1$ 
 since a sphere centered at 0 in the real line is a pair of points $\{x,-x\}$.

 \subsection{The spherical mean  \texorpdfstring{$\lambda_0$}{L0}}   

 Our next goal is to show that there exists a unique spherical mean $\lambda_0$ on $R[X_1,\ldots, X_N]$ 
 for which $\lambda_0\bigl((\mbx)^{n}\bigr)=1$ for all integers $n\geq 0$, and, moreover, that all spherical means on $R[X_1,\ldots, X_N]$ are $R$-multiples of  $\lambda_0$ when restricted to homogeneous polynomials of any fixed degree.

 We will use the double factorial on nonnegative integers:
\begin{equation}
b!!= \begin{cases}
 b(b-2)\cdots  \cdot 1 \qquad\hbox{if $b$ is odd and  $\geq 1$;}\\
 b(b-2)\cdots  \cdot 2 \qquad\hbox{if $b$ is even and  $\geq 2$;}\\
 1 \qquad \hbox{if $b=0$.}
 \end{cases}
 \end{equation}  
It will also be convenient to set $(-1)!! = 1$, which is a special case of the standard extension of double factorials to negative integers using the recursion $n!! = (n+2)!!/(n+2)$.
 
 \begin{prop}\label{P:sphmean}
 Let $M $ be an $R$-module, where the ring $R$ contains $\mbq$, and let $N$ be an integer $\geq 2$.  
 For an $R$-linear map
 \begin{equation}\label{E:lambdM}
 \lambda \colon R[X_1,\ldots, X_N]\to M
 \end{equation}
 the following two properties are equivalent:
\begin{itemize}
\item for all $j,k\in\{1,\ldots, N\}$ with $j \not=k$, 
 \begin{equation}\label{E:kerlambd}
 \lambda\circ M_{jk}=0,
 \end{equation}
\item for each even $d \geq 0$ there is $v_d \in M$ such that for all monomials $X_1^{a_1}\cdots X_N^{a_N}$, 
 \begin{equation*}\label{E:lambdv}
 \lambda(X_1^{a_1}\cdots X_N^{a_N})= 
 \begin{cases}
 (a_1\!-\!1)!!\cdots (a_N\!-\!1)!!v_{a_1+\cdots+ a_N}, & \text{ if all } a_j \text{ are even}, \\
 0, & \text{ if some } a_j \text{ is odd}.
 \end{cases}
\end{equation*}
 \end{itemize}
 
 Moreover, there exists a unique spherical mean  $\lambda_0$ on $R[X_1,\ldots,X_N]$ whose value on $(\mbx)^{n}$ is $1$ for all integers $n\geq 0$. 
 \end{prop}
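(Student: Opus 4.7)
The plan is to establish the equivalence (1)$\Leftrightarrow$(2) first, and then to deduce the existence and uniqueness of $\lambda_0$ by choosing the parameters $v_{2n}$ to meet the normalization.

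The direction (2)$\Rightarrow$(1) is a direct monomial computation. Expanding
$$M_{jk}(X_1^{a_1}\cdots X_N^{a_N}) = a_k X_j^{a_j+1}X_k^{a_k-1}\prod_{l\neq j,k}X_l^{a_l} - a_j X_j^{a_j-1}X_k^{a_k+1}\prod_{l\neq j,k}X_l^{a_l}$$
and applying the formula in (2), both terms vanish automatically unless $a_j$ and $a_k$ are both odd and every other $a_l$ is even; in that case the identity $n!! = n\cdot(n-2)!!$ makes the two contributions cancel exactly.

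The substantial direction is (1)$\Rightarrow$(2). For $f\in R[X_l : l\neq j,k]$ and $\alpha\geq 1$, $\beta\geq 0$, one computes
$$M_{jk}(X_j^{\alpha-1}X_k^{\beta+1}f) = (\beta+1)X_j^{\alpha}X_k^{\beta}f - (\alpha-1)X_j^{\alpha-2}X_k^{\beta+2}f,$$
so applying (1) yields the transfer relation $(\beta+1)\lambda(X_j^{\alpha}X_k^{\beta}f) = (\alpha-1)\lambda(X_j^{\alpha-2}X_k^{\beta+2}f)$. Since $R \supset \mbq$, all positive integers are invertible and the relation may be iterated freely. First I would show $\lambda(X^a) = 0$ whenever some $a_i$ is odd: without loss of generality $a_1$ is odd; iterate the transfer in the pair $(1,k)$ for some $k\neq 1$ until the first exponent drops to $1$, reducing $\lambda(X^a)$ up to a nonzero rational factor to some $\lambda(X_1 X_k^c f)$ with $f$ independent of $X_1,X_k$. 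This vanishes because $M_{1k}(X_k^{c+1}f) = (c+1)X_1 X_k^c f$, while the degenerate case $a = (1,0,\ldots,0)$ is covered by $X_1 = M_{12}(X_2)$. Second, for all-even $a$ with $|a| = d$, iterated transfer consolidates all exponents into $X_N$:
$$\lambda(X^a) = \frac{\prod_{i=1}^{N}(a_i-1)!!}{(d-1)!!}\,\lambda(X_N^d).$$
Setting $v_d := \lambda(X_N^d)/(d-1)!!$ gives the required formula; the same transfer shows $\lambda(X_j^d) = \lambda(X_k^d)$ for all $j,k$, so $v_d$ is intrinsic and independent of the chosen target variable.

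For $\lambda_0$, the equivalence shows that a spherical mean $\lambda : \mcp_N \to R$ is determined by the family $\{v_{2n}\}_{n\geq 0} \subset R$, and any choice of these parameters produces a spherical mean. Multinomial expansion of $(\mbx)^n$ gives
$$\lambda_0((\mbx)^n) = v_{2n}\sum_{|a|=n}\binom{n}{a_1,\ldots,a_N}\prod_{i=1}^{N}(2a_i - 1)!! = v_{2n}\cdot N(N+2)(N+4)\cdots (N+2n-2),$$
where the closed form follows from comparing coefficients of $x^n$ in the identity $(1-2x)^{-N/2} = \prod_{i=1}^{N}(1-2x)^{-1/2}$. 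Since the right-hand product is a positive integer and $\mbq \subset R$, it is invertible in $R$, so the condition $\lambda_0((\mbx)^n) = 1$ uniquely determines $v_{2n}$, and conversely this prescription produces a spherical mean with the required normalization.

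The main obstacle is the well-definedness of $v_d$ in the (1)$\Rightarrow$(2) step: the iterated transfer can proceed in many orders and through different pairs of variables, and one must verify that all routes give the same answer. This is ensured because condition (1) forces $\lambda$ to factor through $\mcp_N / \sum_{j\neq k} M_{jk}(\mcp_N)$, so every legal application of the transfer identity keeps the argument in a fixed equivalence class and hence preserves the value of $\lambda$.
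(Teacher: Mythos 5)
Your proof is correct and follows essentially the same route as the paper: the transfer identity obtained by applying $\lambda$ to $M_{jk}(X_j^{\alpha-1}X_k^{\beta+1}f)$, the reduction to exponent $1$ to kill monomials with an odd exponent, the double-factorial normalization for all-even monomials, and the multinomial expansion of $(\mbx)^n$ to pin down $v_{2n}$. Two minor remarks: the ``well-definedness obstacle'' you raise at the end is not actually an obstacle, since $\lambda(X^a)$ is a fixed element of $M$ and any chain of transfer identities computes that same fixed value; and your closed form $N(N+2)\cdots(N+2n-2)$ for the multinomial sum is a bonus the paper only needs positivity for at this stage (it derives the closed form later, in Proposition \ref{s-formula}).
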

 Let us note that the last paragraph above holds even for $N=1$: we set $\lambda_0(X_1^k)$ equal to $0$ for odd $k$ and equal to $1$ for even $k$.
 We also note that the value $\lambda(X_1^{a_1}\cdots X_N^{a_N})$ is invariant under permutations of the indeterminates $X_1,\ldots, X_N$, and so, using $\lambda_0(\mbx)=1$, it follows that
 \begin{equation}\label{E:lambda0Xj}
 \lambda_0(X_j^2)=\frac{1}{N},
 \end{equation}    
 for all $j\in\{1,\ldots, N\}$.

 \begin{proof}[Proof of Proposition \ref{P:sphmean}] 
 First we assume (\ref{E:kerlambd}) holds and derive the formula for $\lambda$ on monomials. 
 Let $X$ and $Y$ be two distinct indeterminates among $X_1,\ldots, X_N$, and let $q$ be a polynomial in the other indeterminates. Then for any integers $a\geq 1$ and $b\geq 0$, we have
 \begin{equation}\label{E:lambXY}
 (X\partial_Y-Y\partial_X)\left(X^{a-1}Y^{b+1}q\right)=  (b+1)X^aY^bq - (a-1)X^{a-2} Y^{b+2}q,
 \end{equation} 
 where the second term on the right is defined to be $0$ if $a=1$.
 Applying $\lambda$ to both sides, we get by (\ref{E:kerlambd}) 
 \begin{equation}\label{E:bp1la}
 0=(b+1)\lambda\left(X^aY^bq\right)-(a-1)\lambda\left(X^{a-2}Y^{b+2}q\right).
 \end{equation}
 Taking $a=1$ we see that $\lambda(Xr)=0$ for all polynomials $r$ in the indeterminates other than $X$. Moreover, if $a$ is an odd positive integer, repeated application of (\ref{E:bp1la}) reduces $\lambda(X^aY^bq)$ to a rational multiple of $\lambda(XY^{b+a-1}q)$, which is $0$. Conversely, if $\lambda(X^aY^bq) = 0$ whenever $a$ is odd and $q$ does not involve $X$ or $Y$ 
 then (\ref{E:bp1la}) holds for  odd $a \geq 1$ since both terms on the right are then $0$. 
 Thus $\lambda(X_1^{a_1}\cdots X_N^{a_N}) = 0$ if
 some $a_j$ is odd.
 
 Now suppose $a$ and $b$ are both even positive integers. Dividing both sides of (\ref{E:bp1la}) by 
 the product of double factorials 
 $(a-1)!!(b+1)!!$, we obtain 
 \begin{equation}\label{E:lambdacommon}
 \frac{1}{(a-1)!!(b-1)!!} \lambda\left(X^aY^bq\right)=\frac{1}{(a-3)!!(b+1)!!} \lambda\left(X^{a-2}Y^{b+2}q\right).
 \end{equation}
It follows 
that $\lambda$ has a common value on all monomials of the form
$$\frac{1}{(a-1)!!(b-1)!!}X^aY^bq,$$
where $a$ and $b$ are even non-negative integers with {\em a fixed value for the sum} $a+b$. Of course, this applies to $X$ and $Y$ being any two of the indeterminates $X_1,\ldots, X_N$. It follows that $\lambda$ has a common value on all monomials of the form
 \begin{equation}\label{E:prodaX}
 \frac{1}{ (a_1-1)!!\cdots (a_N-1)!!}X_1^{a_1}\cdots X_N^{a_N}
   \end{equation}
   for all even integers $a_1,\ldots, a_N\geq 0 $ having a fixed values for the sum $|a|$.
    Conversely, if $\lambda$ has a common value on all monomials of the form
    (\ref{E:prodaX}), where the $a_j$ are all even, then (\ref{E:lambdacommon}) and hence  (\ref{E:bp1la})     hold  for  $X$ and $Y$ running over distinct indeterminates among $X_1,\ldots, X_N$. Then by  (\ref{E:lambXY}), $\lambda(M_{XY}p)$ is $0$ for all polynomials $p$ and hence $\lambda\circ M_{jk}=0$ for all $j,k$.
     
   Thus an $M$-valued spherical mean $\lambda$ is obtained by setting the value of $\lambda$ to be 0 on monomials 
   having an odd exponent, and to have a common value 
   $v_d\in M$ on the monomials in (\ref{E:prodaX}) having all non-negative even exponents and being of total degree $d$.
   
   Focusing now on the case $M=R$, let $\lambda$ be an $R$-valued spherical mean on $R[X_1,\ldots,X_N]$ and set $s_{2n,N} \in R$ to be its common value on the monomials in (\ref{E:prodaX}) of total degree $2n$, where $n \geq 0$. Then 
   \begin{equation}\label{E:lambac}
   \lambda(X_1^{a_1}\cdots X_N^{a_N})= (a_1-1)!!\cdots (a_N-1)!!s_{2n,N},
   \end{equation}
   where the exponents $a_1,\ldots, a_N$ are all even with 
   $a_1+\cdots + a_N = 2n$. Then expanding $(\mbx)^n$ with the multinomial theorem and applying $\lambda$, 
\begin{eqnarray*} 
 \lambda\bigl((\mbx)^n\bigr) & = & \sum_{\substack{b_1,\ldots, b_N \geq 0 \\ b_1+\cdots +b_N = n}}  
 \frac{n!}{b_1!\ldots b_N!}(2b_1-1)!!\ldots (2b_N-1)!!s_{2n,N} \\
 & = & 
 s_{2n,N}\sum_{\substack{b_1,\ldots, b_N \geq 0 \\ b_1+\cdots +b_N = n}}  
 \frac{n!}{b_1!\ldots b_N!}(2b_1-1)!!\ldots (2b_N-1)!!.
\end{eqnarray*}
The summations are really over ordered $N$-tuples $(b_1,\ldots,b_N)$. 
Since the summation is positive, for each $n \geq 0$ there is a unique positive rational number $s_{2n,N}$ that makes $\lambda((\mbx)^n) = 1$. This determines a unique 
$R$-valued spherical mean $\lambda$ on $R[X_1,\ldots,X_N]$, by 
(\ref{E:lambac}), and that $\lambda$ is what we take as $\lambda_0$.
    \end{proof}
   
   Let us note, using (\ref{E:lambac}), that for even 
   $2b_1, \ldots, 2b_N \geq 0$, 
   \begin{equation}\label{E:monint}
   \lambda_0(X_1^{2b_1}\cdots X_N^{2b_N})= (2b_1-1)!!\ldots (2b_N-1)!!s_{2n,N},
   \end{equation}
   where $b_1+\cdots+b_N=n$; the multiplier $s_{2n,N}$, which is chosen to make $\lambda_0\bigl((\mbx)^n\bigr)=1$, is given below in (\ref{E:{s-formula}}).

   \begin{remark}\label{R:rksph}
   We observe, as a consequence of Proposition \ref{P:sphmean}, that the value of a spherical mean on $X_1^{a_1}\cdots X_N^{a_N}$, with each $a_i$ even, is a multiple of $(a_1-1)!!\cdots (a_N-1)!!$, the multiple being determined by the degree $a_1+\cdots+a_N$. Thus, a mapping $\lambda:R[X_1,\ldots,X_N]\to R$ is a spherical mean if and only if it is an $R$-multiple of $\lambda_0$ when restricted to homogeneous polynomials of fixed degree. For this reason we can, without loss of much generality, restrict our attention to $\lambda_0$ rather than on all spherical means.\end{remark}
       
       For a polynomial $p$ with real coefficients, viewed as a function on $\mbr^N$, 
       the value of $\lambda_0(p)$ is the normalized integral of $p$ 
       over the unit sphere in $\mbr^N$ with center at the origin.   The following result reflects the fact that multiplying by a factor of $\mbx$, which would correspond to just multiplying by $1$ over the geometric sphere in $\mbr^N$, does not affect the value of $\lambda_0$ on any polynomial.

       \begin{prop}\label{P:lambdasphere} 
       For a commutative ring $R$ containing $\mbq$,
       \begin{equation}\label{E:lambnormX}
       \lambda_0\bigl((\mbx)p\bigr)=\lambda_0(p)
        \end{equation}
        for all $p\in R[X_1,\ldots, X_N]$.
       \end{prop}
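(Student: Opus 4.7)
The plan is to introduce the auxiliary map $\mu\colon R[X_1,\ldots,X_N]\to R$ defined by $\mu(p)=\lambda_0\bigl((\mbx)p\bigr)$ and show it coincides with $\lambda_0$, using the characterization of spherical means in Proposition \ref{P:sphmean} together with Remark \ref{R:rksph}.

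First I would verify that $\mu$ is itself a spherical mean. Since $M_{jk}$ is a derivation and $M_{jk}(\mbx)=0$ by (\ref{E:Mjkmbx}), the Leibniz rule gives $M_{jk}\bigl((\mbx)p\bigr)=(\mbx)M_{jk}p$ for every polynomial $p$. Applying $\lambda_0$ and using $\lambda_0\circ M_{jk}=0$ then yields
$$\mu(M_{jk}p)=\lambda_0\bigl((\mbx)M_{jk}p\bigr)=\lambda_0\bigl(M_{jk}((\mbx)p)\bigr)=0,$$
so $\mu\circ M_{jk}=0$ for all distinct $j,k$, and $\mu$ is $R$-linear by construction.

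Next I would invoke Remark \ref{R:rksph}: because $\mu$ is a spherical mean, its restriction to each homogeneous component $\mcp_{N,d}$ is an $R$-scalar multiple of $\lambda_0|_{\mcp_{N,d}}$, say $\mu|_{\mcp_{N,d}}=c_d\,\lambda_0|_{\mcp_{N,d}}$. For odd $d$, every monomial in $\mcp_{N,d}$ carries at least one odd exponent, so both functionals vanish on $\mcp_{N,d}$ by the monomial formula in Proposition \ref{P:sphmean}, and the equality $\mu=\lambda_0$ on $\mcp_{N,d}$ is automatic. For $d=2n$, I would test both sides against $(\mbx)^n\in\mcp_{N,2n}$: the normalization $\lambda_0\bigl((\mbx)^n\bigr)=\lambda_0\bigl((\mbx)^{n+1}\bigr)=1$ gives $\mu\bigl((\mbx)^n\bigr)=1=\lambda_0\bigl((\mbx)^n\bigr)$, which forces $c_{2n}=1$.

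Combining this with the direct sum decomposition $R[X_1,\ldots,X_N]=\bigoplus_d\mcp_{N,d}$ and the $R$-linearity of both functionals yields $\mu=\lambda_0$, which is precisely (\ref{E:lambnormX}). No step looks like a genuine obstacle; the only place meriting a little attention is the compatibility between the normalization of $\lambda_0$ and the degree shift caused by multiplication by $\mbx$, which works out cleanly because $(\mbx)^{n+1}$ is the natural test input after the shift.
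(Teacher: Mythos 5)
Your proposal is correct and follows essentially the same route as the paper: define $\mu(p)=\lambda_0\bigl((\mbx)p\bigr)$, check via $M_{jk}(\mbx)=0$ that $\mu$ is a spherical mean, and verify $\mu\bigl((\mbx)^n\bigr)=1$ for all $n\geq 0$. The paper then concludes directly from the uniqueness clause of Proposition \ref{P:sphmean}, whereas you route through Remark \ref{R:rksph} degree by degree, but this is the same argument in slightly different packaging.
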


 \begin{proof} The case $N=1$ follows by direct verification. We assume then that $N\geq 2$.
  We will show the  
    $R$-linear map $\mu \colon R[X_1,\ldots,X_N] \rightarrow R$ given by 
    $\mu(p) = \lambda_0((\mbx)p)$ has the properties that uniquely characterize $p \mapsto \lambda_0(p)$.
    
    For each $M_{jk}$, 
    $\mu(M_{jk}p) = \lambda_0((\mbx)M_{jk}(p)) = \lambda_0(M_{jk}((\mbx)p))$ by (\ref{mjkxx}). Thus 
    $\mu(M_{jk}p) = (\lambda_0 \circ M_{jk})((\mbx)p) = 0$, so  $\mu$ is a spherical mean.
    
    We have $\mu((\mbx)^n) = 1$ for all $n \geq 0$ since 
    $$
    \mu((\mbx)^n) = \lambda_0((\mbx)^{n+1}) = 1.
    $$
    Hence $\mu=\lambda_0$.
    \end{proof}
    
As an application of 
Proposition \ref{P:lambdasphere} we next give a formula 
for $s_{2n,N}$ when $n \geq 1$ that is much simpler than the 
reciprocal of the 
summation over $N$-tuples at the end of the proof of 
Proposition \ref{P:sphmean}.

\begin{prop}\label{s-formula}
For each $n \geq 0$,
\begin{equation}\label{E:{s-formula}}
s_{2n,N}=\frac{(N-2)!!}{(N+2n-2)!!}= \frac{1}{(N+2n-2)(N+2n-4)\cdots N}, 
\end{equation}
where successive terms in the denominator drop by $2$.  
\end{prop}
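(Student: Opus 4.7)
The plan is to establish the simple recursion
\begin{equation*}
s_{2n,N} \;=\; \frac{s_{2(n-1),N}}{N + 2n - 2} \qquad (n \geq 1),
\end{equation*}
together with the base case $s_{0,N} = 1$, and then iterate. The base case is immediate: since $(\mbx)^{0} = 1$, the normalization $\lambda_0\bigl((\mbx)^n\bigr) = 1$ together with the defining formula (\ref{E:monint}) gives $1 = \lambda_0(1) = s_{0,N}$.

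For the recursion, I would apply $\lambda_0$ to both sides of the polynomial identity
\begin{equation*}
(\mbx)\, X_1^{2b_1} \cdots X_N^{2b_N} \;=\; \sum_{j=1}^N X_j^{2b_j + 2}\prod_{k \neq j} X_k^{2b_k},
\end{equation*}
where $(b_1,\ldots,b_N)$ is any $N$-tuple of nonnegative integers with $b_1 + \cdots + b_N = n-1$. By Proposition \ref{P:lambdasphere} the left side equals $\lambda_0(X_1^{2b_1}\cdots X_N^{2b_N}) = \prod_k (2b_k - 1)!! \cdot s_{2(n-1),N}$. On the right, formula (\ref{E:monint}) evaluates each monomial (of total degree $2n$), and the factorization $(2b_j + 1)!! = (2b_j + 1)(2b_j - 1)!!$ lets me pull out the common factor $\prod_k (2b_k - 1)!!$ from every summand. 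What remains is the scalar $\sum_{j=1}^N (2b_j + 1) = 2(n-1) + N = N + 2n - 2$. Thus the equality becomes
\begin{equation*}
\prod_k (2b_k - 1)!! \cdot s_{2(n-1),N} \;=\; \prod_k (2b_k - 1)!! \cdot (N + 2n - 2)\cdot s_{2n,N}.
\end{equation*}
Specializing to $b_1 = n-1$, $b_2 = \cdots = b_N = 0$, the common factor reduces to $(2n-3)!!$ (interpreting $(-1)!! = 1$ when $n = 1$), which is a positive integer and hence invertible in $R \supseteq \mbq$. Cancelling it yields the stated recursion.

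Iterating the recursion starting from $s_{0,N} = 1$ gives
\begin{equation*}
s_{2n,N} \;=\; \frac{1}{N(N+2)(N+4)\cdots(N+2n-2)} \;=\; \frac{(N-2)!!}{(N+2n-2)!!},
\end{equation*}
matching (\ref{E:{s-formula}}). There is no real obstacle in this argument; the only subtlety is justifying the cancellation of $(2n-3)!!$, which is why I choose the particular $N$-tuple $(n-1,0,\ldots,0)$ rather than trying to cancel an arbitrary product of double factorials from a general $(b_1,\ldots,b_N)$.
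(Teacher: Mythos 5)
Your proof is correct and follows essentially the same route as the paper: both apply Proposition \ref{P:lambdasphere} to a monomial with even exponents, expand $\mbx$ as $\sum_j X_j^2$, and use (\ref{E:monint}) together with $(2b_j+1)!!=(2b_j+1)(2b_j-1)!!$ to obtain the recursion $s_{2n,N}=s_{2n-2,N}/(N+2n-2)$. The only (cosmetic) difference is in how the common double-factorial factor is removed: the paper divides by $s_{2n,N}$ and sums over all tuples with multinomial weights to invoke the normalization $\lambda_0\bigl((\mbx)^n\bigr)=1$, while you cancel the invertible integer $(2n-3)!!$ from a single well-chosen monomial, which is equally valid since $R\supseteq\mbq$.
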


\begin{proof}
 Use Proposition \ref{P:lambdasphere} on $p=X_1^{a_1}\cdots X_N^{a_N}$ where $a_1,\ldots, a_N$ are non-negative even integers with sum $2n$. Since $(\mbx)p$ is homogeneous of degree $2n+2$, writing $\mbx$ as $\sum_{i=1}^n X_i^2$ and using linearity of $\lambda_0$ implies 
 \begin{equation} 
 \begin{split}
\lambda_0((\mbx)p) & =   
((a_1+1)!!(a_2-1)!!\cdots (a_N-1)!!+\cdots \\
& \qquad\qquad   +(a_1-1)!!\cdots (a_{N-1}-1)!!(a_N+1)!!)s_{2n+2,N} \\
&=  (a_1+1+\cdots +a_N+1)\frac{\lambda_0(p)}{s_{2n,N}}s_{2n+2,N} \\
&=  (N+2n)\lambda_0(p)\frac{s_{2n+2,N}}{s_{2n,N}}.
\end{split}
\end{equation}
Division by $s_{2n, N}$ is okay since it is nonzero by the formula for it at the end of the proof of Proposition \ref{P:sphmean}.
Multiplying both sides by $\frac{n!}{(a_1/2)!\cdots (a_N/2)!}$ and summing over all ordered $N$-tuples of nonnegative even integers $a_1, \ldots, a_N$ 
with $a_1 + \cdots + a_N=2n$, we obtain 
\begin{equation}
\lambda_0\bigl((\mbx)^{n+1}\bigr)= (N+2n)\lambda_0\bigl((\mbx)^n\bigr)\frac{s_{2n+2,N}}{s_{2n,N}}.
 \end{equation}
 Thus 
 \begin{equation}\label{E:s2aa}
 s_{2n+2,N}=\frac{1}{N+2n}s_{2n,N}, 
 \end{equation}
which together with the value $s_{0,N}=1$ shows for $n \geq 0$ that 
 \begin{equation}\label{E:s2n}
 s_{2n,N}= \frac{1}{(N+2n-2)(N+2n-4)\cdots N}, 
 \end{equation}
where successive terms in the denominator drop by 2 each time (and the empty product equals $1$).
To get a formula for $s_{2n,N}$ that is more transparently valid at $n = 0$, we
multiply the right side of (\ref{E:s2n}) by $(N-2)!!/(N-2)!!$.  
\end{proof}

Recall from the end of the proof of Proposition 
\ref{P:sphmean} that the condition 
$\lambda_0((\mbx)^n) = 1$ says 
   \begin{equation}\label{E:s2nNform2}
   \sum_{\substack{b_1,\ldots, b_N \geq 0 \\ b_1+\cdots +b_N = n}} \frac{n!}{b_1!\cdots b_N!} (2b_1-1)!!\cdots (2b_N-1)!!s_{2n,N}=1.
   \end{equation}
Combining this with (\ref{E:s2n}) and the formula 
$\binom{2b}{b} = \frac{2^b(2b-1)!!}{b!}$ for $b \geq 0$, 
we get 
   \begin{equation}\label{E:multinform}
   \begin{split}
   \sum_{\substack{b_1,\ldots, b_N \geq 0 \\ b_1+\cdots +b_N = n}} \binom{2b_1}{b_1}\cdots \binom{2b_N}{b_N}&\\
   &\hskip -1in =\frac{2^n}{n!} (N+2n-2)(N+2n-4)\cdots N.
   \end{split}
\end{equation}
For example, when $n=2$ the left side of (\ref{E:multinform}) 
is 
$N\binom{4}{2} + \binom{N}{2}\binom{2}{1}\binom{2}{1} = 2N^2+4N$
and the right side $(2^2/2!)(N+2)N = 2(N+2)N = 2N^2 + 4N$.

The identity (\ref{E:multinform}) can be 
derived in a second way using generating functions: the generating function 
of the left side is $(\sum_{b \geq 0} \binom{2b}{b}X^b)^N$, 
and the generating function of the 
central binomial coefficients $\binom{2b}{b}$ is $(1-4X)^{-1/2}$, whose $N$th power has coefficients given by the right side of (\ref{E:multinform}) using the binomial theorem with exponent $-N/2$. 
A probabilistic interpretation of (\ref{E:multinform}) 
is in \cite{chang-xu}. 
When $N = 2$, (\ref{E:multinform}) is the identity 
$\sum_{b=0}^n \binom{2b}{b}\binom{2(n-b)}{n-b} = 4^n$, 
which has a combinatorial interpretation \cite{sved}. 
The identity (\ref{E:multinform}) is valid at $N=1$ 
if the right side is written as $\frac{2^n(N+2n-2)!!}{n!(N-2)!!}$.

We conclude this discussion with a formula relating $\lambda_0(X_1q)$ to $\lambda_0\left(\partial_1q\right)$.

\begin{prop}\label{P:lambda0par}
  Suppose $R$ is a commutative ring containing $\mbq$, and $N\geq 2$ an integer. Then for any  $p\in R[X_1,\ldots, X_N]$, which is homogeneous of degree $d$, we have
\begin{equation}\label{E:l0q}
\lambda_0(X_1p)= \frac{1}{N+d-1}\lambda_0\left(\partial_1p\right).
\end{equation}
\end{prop}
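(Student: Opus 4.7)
The plan is to derive the identity from a single algebraic manipulation involving the rotation generators $M_{1j}$, the Euler operator, and Proposition \ref{P:lambdasphere}, avoiding any reduction to monomials.

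The starting observation is the operator identity
\begin{equation*}
\sum_{j=1}^{N} X_j M_{1j} \;=\; \sum_{j=1}^{N} X_j(X_1\partial_j - X_j\partial_1)
\;=\; X_1 (r\partial_r) - (\mbx)\partial_1,
\end{equation*}
which I would verify by regrouping the sum. Applied to $p$, which is homogeneous of degree $d$, this reads
\begin{equation*}
\sum_{j=1}^{N} X_j M_{1j}(p) \;=\; d\,X_1 p \;-\; (\mbx)\partial_1 p.
\end{equation*}
Applying $\lambda_0$ to the right-hand side and using Proposition \ref{P:lambdasphere} to kill the factor of $\mbx$ gives $d\,\lambda_0(X_1 p) - \lambda_0(\partial_1 p)$.

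The key step is to rewrite the left-hand side using the derivation property of $M_{1j}$. For any $j$,
\begin{equation*}
M_{1j}(X_j p) \;=\; (M_{1j}X_j)p + X_j M_{1j}(p),
\end{equation*}
and a direct calculation shows $M_{1j}X_j = X_1$ for $j\neq 1$ while $M_{11}X_1 = 0$ (and $M_{11}=0$). Hence
\begin{equation*}
\sum_{j=1}^N X_j M_{1j}(p) \;=\; \sum_{j\neq 1}\bigl(M_{1j}(X_j p) - X_1 p\bigr)
\;=\; \sum_{j\neq 1} M_{1j}(X_j p) \;-\; (N-1)X_1 p.
\end{equation*}
Applying $\lambda_0$, the spherical-mean property $\lambda_0\circ M_{1j}=0$ annihilates each $M_{1j}(X_j p)$, leaving $-(N-1)\lambda_0(X_1 p)$.

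Equating the two expressions for $\lambda_0\!\left(\sum_j X_j M_{1j}(p)\right)$ yields
\begin{equation*}
-(N-1)\,\lambda_0(X_1 p) \;=\; d\,\lambda_0(X_1 p) \;-\; \lambda_0(\partial_1 p),
\end{equation*}
so $\lambda_0(\partial_1 p) = (N+d-1)\lambda_0(X_1 p)$, and dividing by the positive integer $N+d-1$ (invertible in $R$ since $\mbq\subset R$) gives (\ref{E:l0q}). I do not foresee a serious obstacle: the only delicate point is keeping the $j=1$ term separated when computing $M_{1j}X_j$, since there the formula collapses. Once that is handled, the proof is essentially one line.
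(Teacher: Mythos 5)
Your proof is correct, and it takes a genuinely different route from the paper's. The paper proves (\ref{E:l0q}) by reducing to monomials and computing both sides explicitly from the closed-form values $\lambda_0(X_1^{2b_1}\cdots X_N^{2b_N})=(2b_1-1)!!\cdots(2b_N-1)!!\,s_{2n,N}$ together with the recursion $s_{2n,N}=s_{2n-2,N}/(N+2n-2)$ from Proposition \ref{s-formula}; it is a direct double-factorial verification. You instead exploit only the structural properties of $\lambda_0$: the operator identity $\sum_{j}X_jM_{1j}=X_1(r\partial_r)-(\mbx)\partial_1$, the Euler operator acting as $d$ on homogeneous $p$, the invariance $\lambda_0((\mbx)q)=\lambda_0(q)$ from Proposition \ref{P:lambdasphere}, and the defining property $\lambda_0\circ M_{1j}=0$ applied after moving $X_j$ inside $M_{1j}$ via the derivation rule (with the $j=1$ term correctly discarded since $M_{11}=0$). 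All the individual steps check out, and the final division by $N+d-1$ is legitimate since that is a positive integer and $\mbq\subset R$. Your argument is shorter, avoids the combinatorics entirely, and is closer in spirit to the paper's own proof of Proposition \ref{P:l0Del} (which likewise derives a $\lambda_0$-identity from the operator identity (\ref{magicformula}) plus Proposition \ref{P:lambdasphere}); the paper's computation, on the other hand, has the minor virtue of re-deriving the constant from the explicit monomial formula, which serves as an independent consistency check on $s_{2n,N}$. Either proof would be acceptable.
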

\begin{proof} We may assume  first that $p$ is a monomial of degree $d$. If any $X_j$, other than $X_1$, appears with odd degree in $p$ then both sides of (\ref{E:l0q}) are $0$, whereas if $X_1$ appears with even degree in $p$ then again both sides of (\ref{E:l0q}) are $0$. Hence we assume that $p=X_1^{2b_1-1}X_2^{2b_2}\cdots X_N^{2b_N}$, where $b_1,\ldots, b_N$ are integers $\geq 0,$ and $b_1\geq 1$. Then
\begin{equation}
    \lambda_0(X_1p)= (2b_1-1)!!\cdots (2b_N-1)!!s_{2n,N},
    \end{equation}
    where
    $$2n=2b_1+\cdots +2b_N=d+1,$$
    and
\begin{equation}
    \lambda_0(\partial_1p)= (2b_1-1)\cdot (2b_1-3)!!(2b_2-1)!!\cdots (2b_N-1)!!s_{2n-2,N}.
    \end{equation}
    Thus
    \begin{equation}
      \lambda_0(X_1p)=\frac{s_{2n,N}}{s_{2n-2, N}}\lambda_0(\partial_1p)   =\frac{1}{N+2n-2}\lambda_0(\partial_1p)
    \end{equation}
    where we used (\ref{E:{s-formula}}). The multiplier on the right here is $1/(N+d-1)$, thereby agreeing with (\ref{E:l0q}).
\end{proof}
 
\subsection{Spherical mean and the Laplacian}\label{ss:sphlapl}
Using Proposition \ref{P:lambdasphere} we can now describe the  normalized spherical mean on homogeneous 
polynomials in terms of iterations of the Laplacian.

\begin{prop}\label{P:l0Del}
Let $R$ be a commutative ring containing $\mbq$.
The normalized spherical mean $\lambda_0$ on $R[X_1,\ldots,X_N]$ can be described on 
homogeneous polynomials $p$ as follows. If $p$ has odd degree then $\lambda_0(p) = 0$. 
For a constant $r$, $\lambda_0(r) = r$. 
If $p$ has even degree $2n \geq 2$ then 
 \begin{equation}\label{E:l0DX23}
 \lambda_0(p)=\frac{1}{n!2^n(2n+N-2)(2n+N-4) \cdots (2n+N-2n)} \Delta^{n} p. 
 \end{equation}
\end{prop}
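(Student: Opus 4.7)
The plan is to handle the three cases (odd degree, constant, even positive degree) using the harmonic decomposition from Proposition~\ref{P:harmdecomp1} together with the two key structural facts already established about $\lambda_0$: invariance under multiplication by $\mbx$ (Proposition~\ref{P:lambdasphere}) and the mean-value property on harmonic polynomials (Corollary~\ref{harmonic0}).

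The easy cases come first. If $p$ is homogeneous of odd degree, then every monomial in $p$ must have at least one odd exponent (because the sum of exponents is odd), so by the monomial formula in Proposition~\ref{P:sphmean} each such monomial is killed by $\lambda_0$; hence $\lambda_0(p)=0$. If $p=r\in R$, then $\lambda_0(r)=r\,\lambda_0((\mbx)^0)=r$.

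For the main case, suppose $p\in\mcp_{N,2n}$ with $n\geq 1$. By Proposition~\ref{P:harmdecomp1} we can write uniquely
\begin{equation*}
p = p_0 + (\mbx)p_1 + \cdots + (\mbx)^n p_n,
\end{equation*}
where each $p_j$ is harmonic and is either $0$ or homogeneous of degree $2n-2j$; in particular $p_n\in R$. I would first identify $\lambda_0(p)$ with $p_n$: by Proposition~\ref{P:lambdasphere} applied $j$ times, $\lambda_0((\mbx)^j p_j)=\lambda_0(p_j)$, so $\lambda_0(p)=\sum_{j=0}^n \lambda_0(p_j)$. For $j<n$ the polynomial $p_j$ is harmonic and homogeneous of strictly positive degree, so Corollary~\ref{harmonic0} gives $\lambda_0(p_j)=p_j(0)=0$, leaving $\lambda_0(p)=\lambda_0(p_n)=p_n$.

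Next I would compute $\Delta^n p$ and show it equals an explicit constant times $p_n$. For $j<n$, after applying $\Delta$ exactly $j$ times to $(\mbx)^j p_j$ one reaches a constant multiple of the harmonic polynomial $p_j$ (using Lemma~\ref{L:DelraNXdtXmq} together with $\Delta p_j=0$), and the remaining $n-j\geq 1$ applications of $\Delta$ annihilate it. Hence only the top term survives:
\begin{equation*}
\Delta^n p = \Delta^n\bigl((\mbx)^n p_n\bigr) = p_n\,\Delta^n\bigl((\mbx)^n\bigr).
\end{equation*}
Iterating Proposition~\ref{P:DeltanormX} gives
\begin{equation*}
\Delta^n\bigl((\mbx)^n\bigr)=\prod_{k=1}^n 2k(2k+N-2) = 2^n n!\,N(N+2)\cdots(N+2n-2),
\end{equation*}
and rewriting the last product in reverse as $(2n+N-2)(2n+N-4)\cdots(2n+N-2n)$ yields exactly the denominator appearing in \eqref{E:l0DX23}. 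Combining $\lambda_0(p)=p_n$ with the expression for $\Delta^n p$ produces the claimed formula.

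The only non-routine step is the reduction $\lambda_0(p)=p_n$, which hinges on Corollary~\ref{harmonic0}; the rest is bookkeeping with Lemma~\ref{L:DelraNXdtXmq} and Proposition~\ref{P:DeltanormX}. Care is needed to ensure that the assumption $\mbq\subset R$ is actually in force so that Proposition~\ref{P:harmdecomp1} applies and the denominator $n!\,2^n(2n+N-2)\cdots(2n+N-2n)$ is invertible; both are given in the present setting.
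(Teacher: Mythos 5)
Your argument is correct, but it takes a genuinely different route from the paper. The paper's proof is a short recursion: applying $\lambda_0$ to the identity (\ref{magicformula}) evaluated on a homogeneous $p$ of degree $d$ kills the $\mbmn$ term (since $\lambda_0\circ M_{jk}=0$) and turns the Euler operators into the scalar $d(d+N-2)$, giving $\lambda_0\bigl((\mbx)\Delta p\bigr)=d(d+N-2)\lambda_0(p)$; Proposition \ref{P:lambdasphere} then yields $\lambda_0(p)=\frac{1}{d(d+N-2)}\lambda_0(\Delta p)$, and iterating $n$ times produces (\ref{E:l0DX23}) directly. You instead route through the harmonic decomposition of Proposition \ref{P:harmdecomp1}, identify $\lambda_0(p)$ with the constant $p_n$ in $p=p_0+(\mbx)p_1+\cdots+(\mbx)^np_n$, and separately show $\Delta^np=p_n\Delta^n\bigl((\mbx)^n\bigr)$ with $\Delta^n\bigl((\mbx)^n\bigr)=2^nn!\,N(N+2)\cdots(N+2n-2)$; all of these steps check out, including the reindexing of the product. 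What your version buys is the structural identification $\lambda_0(p)=p_n$, i.e.\ the spherical mean reads off the degree-zero harmonic component; what it costs is a heavier toolkit (Proposition \ref{P:harmdecomp1}, Lemma \ref{L:DelraNXdtXmq}, Proposition \ref{P:DeltanormX}) versus the paper's two-line recursion. One caution on logical ordering: you invoke Corollary \ref{harmonic0}, which the paper states \emph{after} Proposition \ref{P:l0Del} and labels a corollary of it. This is not actually circular, because the only fact you need --- $\lambda_0(h)=0$ for $h$ harmonic, homogeneous of positive degree --- follows immediately from applying $\lambda_0$ to (\ref{magicformula}) on $h$ (the left side is $\lambda_0\bigl((\mbx)\Delta h\bigr)=0$ and the right side is $d(d+N-2)\lambda_0(h)$), without using the conclusion of Proposition \ref{P:l0Del}. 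You should either cite that short independent argument or restructure so the mean-value property is established first; as written, a reader following the paper's order would flag the dependence.
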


On the right side of (\ref{E:l0DX23}), $\Delta^np$ is in $R$ since it has degree $0$. 

By Remark \ref{R:rksph}, formula (\ref{E:l0DX23}) holds for a general spherical mean $\lambda$ if a scaling term, depending only on the degree of $p$, is inserted on the right hand side.

\begin{proof} 
If $p$ has odd degree then every monomial in $p$ has an odd exponent, 
so $\lambda_0(p) = 0$. If $p = r$ is constant in $R$, then $\lambda_0(r) = r\lambda_0(1) = r$. 

If $p$ has even degree $d \geq 2$, then apply $\lambda_0$ to the identity (\ref{magicformula})  on $p$. We get 
 \begin{equation}\label{E:lDX2}
 \lambda_0\bigl((\mbx)\Delta p\bigr) = d(d+N-2)\lambda_0(p)
 \end{equation}
since $\lambda_0 \circ M_{jk} = 0$ for all $M_{jk}$. 
 Therefore by Proposition \ref{P:lambdasphere}, 
  \begin{equation}\label{E:l0DX2}
 \lambda_0(p)=\frac{1}{d(d+N-2)} \lambda_0\left(\Delta p\right). 
 \end{equation} 
Writing $d=2n\geq 2$, apply (\ref{E:l0DX2}) $n$ times to obtain (\ref{E:l0DX23}).
\end{proof}

\begin{example}
For $N=4$ and $p=X_1^4X_2^6 =X_1^4X_2^6X_3^0X_4^0$, 
so $n=10/2 = 5$, we have
$$
    \lambda_0(X_1^4X_2^6)=\frac{1}{5!2^5(12 \cdot 10\cdot 8 \cdot 6 \cdot 4)} \Delta^5(X_1^4X_2^6)
$$
The value of $\Delta^5(X_1^4X_2^6)$ is $\binom{5}{2}4!6! = 10 \cdot 4!6!$, so 
$$
\lambda_0(X_1^4X_2^6)=\frac{10 \cdot 4!6!}{5!2^5(12 \cdot 10 \cdot 8 \cdot 6 \cdot 4)}  = \frac{1}{2^9}.
$$
This agrees with the value computed using (\ref{E:lambac}) and   the formula for $s_{2n,N}$ in Proposition \ref{s-formula}:
\begin{equation}
    \lambda_0(X_1^4X_2^6)= 3!!5!! s_{10,4} = 3!! 5!! \frac{1}{12 \cdot 10 \cdot 8\cdot 6\cdot 4} = \frac{1}{2^9}.
\end{equation}
\end{example}
 
 \subsection{The mean-value property and harmonic polynomials}\label{ss:meanVT} In classical analysis it is known that the average value of a harmonic function over a sphere is the value of the function at the center of the sphere. In this subsection we establish purely algebraic results concerning this mean value property and harmonic polynomials. The first result is a direct corollary of Proposition \ref{P:l0Del}.
\begin{corollary}\label{harmonic0}
If $p$ is a harmonic polynomial in $R[X_1,\ldots,X_N]$ 
then the normalized spherical mean of $p$ is the value of $p$ at $0$:
 \begin{equation}\label{eqpL}
\lambda_0(p)=p(0).
\end{equation}
\end{corollary}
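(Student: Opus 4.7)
The plan is to reduce to homogeneous components and then apply Proposition \ref{P:l0Del} piece by piece. Write $p = p_0 + p_1 + \cdots + p_d$ with $p_k \in \mcp_{N,k}$. Since $\Delta_N$ preserves the grading (it lowers degree by exactly $2$ on homogeneous pieces), the equation $\Delta p = 0$ forces $\Delta p_k = 0$ for every $k$, as was already observed after (\ref{E:Deltagraded}). Moreover $\lambda_0$ is $R$-linear, so $\lambda_0(p) = \sum_k \lambda_0(p_k)$ and it suffices to evaluate $\lambda_0$ on each harmonic homogeneous summand.

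I would now split into cases according to the degree $k$. If $k = 0$, then $p_0 = p(0) \in R$, and by Proposition \ref{P:l0Del} we have $\lambda_0(p_0) = p_0 = p(0)$. If $k$ is odd, then Proposition \ref{P:l0Del} gives $\lambda_0(p_k) = 0$ directly. If $k = 2n$ is even with $n \geq 1$, the formula (\ref{E:l0DX23}) expresses $\lambda_0(p_k)$ as a rational multiple of $\Delta^n p_k$; but $\Delta p_k = 0$ by harmonicity, so $\Delta^n p_k = \Delta^{n-1}(\Delta p_k) = 0$, and hence $\lambda_0(p_k) = 0$.

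Summing across $k$, all contributions beyond the constant term vanish, so $\lambda_0(p) = p(0)$, as claimed. There is essentially no obstacle here: the entire content is already packaged in Proposition \ref{P:l0Del}, and the only thing that needs to be noticed is that harmonicity of $p$ is inherited by the homogeneous parts, which makes $\Delta^n p_k$ trivially zero whenever the prefactor in (\ref{E:l0DX23}) would otherwise matter. The result is genuinely a one-line consequence once the homogeneous decomposition is invoked.
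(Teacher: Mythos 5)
Your proof is correct and follows essentially the same route as the paper: reduce to homogeneous components by linearity, handle the constant term directly, and use Proposition \ref{P:l0Del} to see that the nonconstant harmonic pieces have spherical mean $0$. The only cosmetic difference is that the paper invokes the intermediate identity (\ref{E:l0DX2}) once for all nonconstant homogeneous $p$, whereas you split into the odd and even cases of the proposition's statement; both are fine.
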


\begin{proof} 
Both sides of (\ref{eqpL}) are linear in $p$, so we can assume $p$ is homogeneous.
If $p$ is constant then (\ref{eqpL}) is true by linearity of $\lambda_0$ and the condition $\lambda_0(1) = 1$.
If $p$ is not constant, then by (\ref{E:l0DX2}) we get $\lambda_0(p) = 0$ (the derivation of (\ref{E:l0DX2}) 
works for all nonconstant homogeneous $p$).
\end{proof}

For  $t=(t_1,\ldots, t_N)$, any $N$-tuple of indeterminates such that 
$$(t,X)=(t_1,\ldots, t_N, X_1,\ldots, X_N)$$
is a  $(2N)$-tuple of algebraically independent indeterminates over $R$, we have the normalized spherical mean
\begin{equation}
    \lambda_0:R[t, X_1,\ldots, X_N]\to R[t],
\end{equation}
simply by using the ring $R[t]=R[t_1,\ldots, t_N]$ in place of $R$.  Now consider a harmonic polynomial  $p(Y_1,\ldots, Y_N)\in R[Y_1,\ldots, Y_N]$; then the polynomial
$$p(X_1+t_1,\ldots, X_N+t_N)\in R[t][X_1,\ldots, X_N]$$
is harmonic as a polynomial in $X_1,\ldots, X_N$ with coefficients in $R[t]$. Hence
\begin{equation}\label{E:harmonicmeanXt}
\lambda_0\bigl(p(X+t)\bigr)=p(t),
\end{equation}
by Corollary \ref{harmonic0}.

 The following result is the converse to the observation in (\ref{E:harmonicmeanXt}).  
 
 \begin{prop}\label{P:harmonicmeanchar}
 Let $R$ be a commutative ring containing $\mbq$, and suppose $p(Y)\in R[Y_1,\ldots, Y_N]$, where $Y=(Y_1,\ldots, Y_N)$, satisfies the mean-value property:
 \begin{equation}\label{E:pmean}
 \lambda_0\bigl(p(X+t)\bigr)=p(t)
 \end{equation}
 where $X=(X_1,\ldots, X_N)$ and $t=(t_1,\ldots, t_N)$ are $N$-tuples of   indeterminates. Then $p(X)$ is harmonic.
 
 \end{prop}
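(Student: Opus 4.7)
The plan is to expand $p(X+t)$ as a polynomial in $X$ with coefficients in $R[t]$, apply the explicit formula from Proposition \ref{P:l0Del} for $\lambda_0$ on homogeneous pieces (the proposition applies because $R[t]$ contains $\mbq$), and exploit the translation identity $\Delta_X[p(X+t)] = (\Delta p)(X+t)$ to convert the mean-value hypothesis into a single polynomial identity involving iterated Laplacians of $p$.

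Concretely, I would write $p(X+t) = \sum_{k \geq 0} Q_k$ where $Q_k \in R[t][X_1,\ldots,X_N]$ is homogeneous of degree $k$ in $X$; in particular $Q_0 = p(t)$. Proposition \ref{P:l0Del} gives $\lambda_0(Q_k) = 0$ for $k$ odd and
$\lambda_0(Q_{2n}) = \frac{1}{c_n}\Delta_X^n Q_{2n}$ for $n \geq 1$, where $c_n = n!\,2^n(2n+N-2)(2n+N-4)\cdots(2n+N-2n)$. Since $\partial_{X_j}$ acts on translates as $\partial_{Y_j}$ does on $p$, we have $\Delta_X^n[p(X+t)] = (\Delta^n p)(X+t)$. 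The constant-in-$X$ part of the right side (i.e.\ its value at $X = 0$) is $(\Delta^n p)(t)$, while on the left side the constant-in-$X$ contribution comes only from $\Delta_X^n Q_{2n}$ (the pieces $\Delta_X^n Q_k$ with $k < 2n$ are zero, and with $k > 2n$ have positive $X$-degree). Thus $\Delta_X^n Q_{2n} = (\Delta^n p)(t)$, and the hypothesis $\lambda_0(p(X+t)) = p(t)$ becomes
\[
  \sum_{n \geq 1} \frac{1}{c_n}(\Delta^n p)(t) = 0 \quad \text{in } R[t],
\]
which, since $t_1,\ldots,t_N$ are algebraically independent over $R$, amounts to the identity $\sum_{n \geq 1} \frac{1}{c_n}\Delta^n p = 0$ in $R[Y_1,\ldots,Y_N]$.

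To extract $\Delta p = 0$ from this single identity, I would decompose $p = p_0 + p_1 + \cdots + p_D$ into homogeneous parts and separate by degree: for each $k$, $\sum_{n \geq 1} \frac{1}{c_n}\Delta^n p_{k+2n} = 0$. A downward induction on the degree then finishes the proof. Taking $k = D-2$ leaves only the $n=1$ term, forcing $\Delta p_D = 0$; assuming $\Delta p_j = 0$ for all $j > m$, the terms with $n \geq 2$ in the equation at $k = m-2$ vanish because $\Delta^n p_{k+2n} = \Delta^{n-1}(\Delta p_{k+2n}) = 0$ by the inductive hypothesis, leaving $\Delta p_m = 0$. Summing the homogeneous conclusions gives $\Delta p = 0$. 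The step requiring the most care is the identification of $\Delta_X^n Q_{2n}$ with $(\Delta^n p)(t)$; once that is in hand, the rest is a clean algebraic calculation followed by a bookkeeping induction.
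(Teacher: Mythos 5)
Your proof is correct, and it takes a genuinely different route from the paper's. The paper proceeds by induction on $\deg p$: it treats degree $\leq 2$ by a direct computation with $\lambda_0(X_mX_n)$, and for higher degree it introduces a second tuple of indeterminates $s$, expands $\lambda_0\bigl(p(X+s+t)\bigr)=p(s+t)$ in powers of $t$, and concludes that every first partial $\partial_j p$ again satisfies the mean-value property; by induction these are harmonic, so $\Delta p$ is constant, the homogeneous parts of degree $\geq 3$ are harmonic, and the leftover degree-$\leq 2$ piece is handled by the base case. You instead apply Proposition \ref{P:l0Del} degreewise to the expansion $p(X+t)=\sum_k Q_k$ and use the translation identity $\Delta_X^n\bigl(p(X+t)\bigr)=(\Delta^n p)(X+t)$ to identify $\Delta_X^nQ_{2n}=(\Delta^np)(t)$ — that identification is the crux and it is correct, since $\Delta_X^nQ_{2n}$ is exactly the $X$-degree-zero part of $(\Delta^np)(X+t)$. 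This converts the hypothesis into the single identity $\sum_{n\geq 1}c_n^{-1}\Delta^np=0$, and your downward induction on homogeneous components then cleanly extracts $\Delta p_m=0$ for each $m$. What your route buys is a sharper statement: the mean-value property is shown to be \emph{equivalent} to the explicit identity $\sum_{n\geq 1}c_n^{-1}(\Delta^np)(t)=0$, and no auxiliary translation by $s$ or separate low-degree base case is needed. What the paper's route buys is economy of input: it needs only $\lambda_0(X_j^2)=1/N$ and linearity rather than the full quantitative formula (\ref{E:l0DX23}). Both arguments are valid over any $R\supset\mbq$, since the coefficients $c_n$ are positive rationals and hence invertible.
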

 \begin{proof}
 Let us first note that the result holds for $p$ of degree $\leq 1$ because all such polynomials are harmonic. Next, suppose $p$ is of degree $2$. Then, writing
 $$p(Y)=\sum_{m,n=1}^Na_{mn}Y_mY_n+\sum_{m=1}^Na_mY_m+a_0,$$
 we have
 \begin{equation}
     \begin{split}
     \lambda_0\bigl(p(X+t)\bigr) &=\lambda_0\left(\sum_{m,n=1}^Na_{mn}X_mX_n+\sum_{m=1}^Na_mX_m\right)\\
     &\qquad + \sum_{m,n=1}^Na_{mn}\bigl(t_n\lambda_0(X_m)+t_m\lambda_0(X_n)\bigr) +p(t)\\
     &=\frac{1}{N}\sum_{m=1}^Na_{mm} +0 +0 +p(t),
     \end{split}
 \end{equation}
 and so
 $$\sum_{n=1}^Na_{nn}=0,$$
 which implies that $p(Y)$ is harmonic, because $\Delta p(Y)=2\sum_{n=1}^Na_{nn}$.

 We assume then that $p$ has degree $>2$. Let $s=(s_1,\ldots, s_N)$ be another $N$-tuple of indeterminates. Then from (\ref{E:pmean}) we have
 \begin{equation}\label{E:pmean2}
 \lambda_0\bigl(p(X+s+t)\bigr)=p(s+t).
 \end{equation}
 Expanding both sides in powers of the indeterminates $t_j$,   and comparing coefficients of $t^{\underline{j}}=t_1^{j_1}\cdots t_{N}^{j_N}$, where $\underline{j}=(j_1,\ldots, j_N)\in\mbz_{\geq 0}^N$, we have
 \begin{equation}\label{E:pharm3}
\lambda_0\bigl(\partial^{{\underline j}}p(X+s)\bigr)=\partial^{{\underline j}}p(s).
 \end{equation}
 We can focus just on those $\underline{j}$ that have $|{\underline j}|=\sum_{k=1}^Nj_k$ equal to $1$ (that is, we focus on the first partial derivatives of $p$).  Then $\partial^{{\underline j}}p(Y)$ is a polynomial of degree $1$ less than that of $p$, and so the condition (\ref{E:pharm3}) implies, by the induction hypothesis, that $\partial^{{\underline j}}p(Y)$ is harmonic. Hence
 \begin{equation}
\partial^{{\underline j}}\bigl(\Delta p(Y)\bigr)=\Delta \partial^{{\underline j}}p(Y)=0.
 \end{equation}
 Thus $\Delta p(Y)$ is constant, an element of $R$. Writing $p(Y)$ as
 $$p(Y)=p_0+p_1(Y)+\cdots +p_d(Y),$$
 where each $p_k(Y)$ is homogeneous of degree $k$, we observe that $$\Delta p(Y)=\Delta p_2(Y)+\sum_{k=3}^d\Delta p_k(Y),$$
 where $\Delta p_k(Y)$ is homogeneous of degree $k-2$. Since $\Delta p(Y)$ is constant, an element of $R$, it  then follows that $p_k(Y)$ is harmonic for $k\geq 3$. 
 
 The polynomial 
 \begin{equation}
     q(Y)=p_0+p_1(Y)+p_2(Y)=p(Y)-\Sigma_{k=3}^dp_k(Y),
      \end{equation}
      satisfies the mean-value property (\ref{E:pmean}) because it is
       the difference of two polynomials that satisfy (\ref{E:pmean}). 
      Since it is of degree $\leq 2$, $q(Y)$ must then be harmonic.  Hence $p(Y)$, being the sum of $q(Y)$ and $\Sigma_{k=3}^dp_k(Y)$  is also harmonic.  
 \end{proof}

       \subsection{Matrices acting on polynomials}\label{ss:roact} Let ${\rm Matr}_N(R)$ be the $R$-algebra of all $N\times N$ matrices with entries in the commutative ring $R$.  For an $N$-tuple of  indeterminates $X=(X_1,\ldots, X_N)$  and any $A\in {\rm Matr}_N(R)$ we denote by $XA$ the $N$-tuple whose $k$-th component is 
     \begin{equation}\label{E:defXA}
     (XA)_k=\sum_{k=1}^NX_mA_{mk}.
       \end{equation}
        This determines a natural action of  ${\rm Matr}_N(R)$ on $R[X_1,\ldots, X_N]$:
       \begin{equation}\label{E:ApX}
       A[p(X)]=p(XA),
       \end{equation}
       for all $A\in {\rm Matr}_N(R)$ and $p(X)\in R[X_1,\ldots, X_N]$. 
      We note that this is indeed a left action:
             \begin{equation}
       (AB)[p(X)]=p(XAB)= A[Bp(X)],
       \end{equation}
       for any $A, B\in {\rm Matr}_N(R)$. Of interest to us are the orthogonal matrices:
        \begin{equation}\label{E:defON}
        \Ort_N(R)=\{A\in {\rm Matr}_N(R)\,:\, {A^\top}A=I\}.
        \end{equation}
It is a fact that in any commutative ring $R$, the condition ${A^\top}A=I$ implies that $A$ is invertible and hence $A^\top=A^{-1}$ is both a left and a right inverse for $A$.

 \subsection{Rotation-invariance of spherical means}\label{ss:rotinvmean}  We work, as before, with a commutative ring $R$ that contains $\mbq$. A spherical mean $\lambda$ on $R[X_1,\ldots,X_N]$ is an algebraic counterpart to integration over the unit $(N-1)$-sphere centered at the origin in $\mbr^N$ with respect to the uniform measure.  Since that measure on the $(N-1)$-sphere is invariant under rotations of $\mbr^N$ around the origin, integration over that sphere is invariant under an orthogonal change of variables. We want to demonstrate an algebraic analogue: $\lambda_0(p(XA)) = \lambda_0(p(X))$ for all $p$ in $R[X_1,\ldots,X_N]$ and 
 $A$ in $\Ort_N(R) = \{A \in {\rm Matr}_N(R) : A^\top A = I_N\}$. 
 This is Proposition \ref{orth-lambda} below and 
 will be proved in two ways.
 The first proof is based on the commutativity of 
 orthogonal matrices and the Laplacian, and the second proof 
 is based on a formula for the spherical mean of a product of homogeneous linear polynomials in terms of spherical means of product of two homogeneous linear polynomials at a time.

 \begin{lemma}\label{LapOrt-commute}
 For $A \in \Ort_N(R)$,  
 $\Delta^m(p(XA)) = (\Delta^m p)(XA)$ for all 
 $p$ in $R[X_1,\ldots,X_N]$ and  $m \geq 0$.
  \end{lemma}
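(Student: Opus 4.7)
The plan is to reduce to the case $m=1$ and then prove that single statement by a direct computation using the algebraic chain rule together with the defining relation $A^\top A = I_N$.

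First I would dispose of $m=0$ (trivial) and observe that if $\Delta(q(XA)) = (\Delta q)(XA)$ holds for every polynomial $q$, then for $m \geq 1$ an easy induction gives
\begin{equation*}
\Delta^m(p(XA)) = \Delta\bigl(\Delta^{m-1}(p(XA))\bigr) = \Delta\bigl((\Delta^{m-1}p)(XA)\bigr) = (\Delta^m p)(XA),
\end{equation*}
where the middle equality is the inductive hypothesis applied to the polynomial $p$ and the last equality is the case $m=1$ applied to the polynomial $\Delta^{m-1}p$. So the entire statement reduces to verifying the case $m=1$.

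For $m=1$, I would apply the chain rule algebraically (which holds in the polynomial setting since $\partial_j$ is an $R$-linear derivation and the substitution $X \mapsto XA$ is a ring homomorphism). From $(XA)_k = \sum_m A_{mk}X_m$, differentiation in $X_j$ of $p(XA)$ yields
\begin{equation*}
\partial_j\bigl(p(XA)\bigr) = \sum_{k=1}^N A_{jk}\,(\partial_k p)(XA),
\end{equation*}
and then applying $\partial_j$ once more gives
\begin{equation*}
\partial_j^2\bigl(p(XA)\bigr) = \sum_{k,l=1}^N A_{jk}A_{jl}\,(\partial_k\partial_l p)(XA).
\end{equation*}
Summing over $j$ and interchanging orders of summation,
\begin{equation*}
\Delta\bigl(p(XA)\bigr) = \sum_{k,l=1}^N \Bigl(\sum_{j=1}^N A_{jk}A_{jl}\Bigr)(\partial_k\partial_l p)(XA) = \sum_{k,l=1}^N (A^\top A)_{kl}\,(\partial_k\partial_l p)(XA).
\end{equation*}
Using $A^\top A = I_N$, the double sum collapses to $\sum_k (\partial_k^2 p)(XA) = (\Delta p)(XA)$, which is exactly what was needed.

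The only conceptual obstacle is justifying the chain rule purely algebraically, but this is standard: both sides of the chain-rule identity are $R$-linear in $p$ and satisfy the Leibniz rule in $p$, so it suffices to check them on the generators $X_1,\ldots,X_N$, where the identity is immediate. Everything else is bookkeeping with indices, and the orthogonality of $A$ is precisely the condition that makes the cross terms $A_{jk}A_{jl}$ with $k \neq l$ cancel upon summation over $j$.
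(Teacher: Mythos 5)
Your proof is correct and follows essentially the same route as the paper's: reduce to $m=1$ by induction, then apply the algebraic chain rule twice and use $A^\top A = I_N$ to collapse the double sum. Your explicit justification of the chain rule (linearity plus Leibniz, checked on generators) is a nice touch that the paper leaves implicit.
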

 
 \begin{proof}
The case $m = 0$ is obvious, and after we establish $m = 1$
the cases $m \geq 2$ follow by induction.

The case $m = 1$ when $R = \mbr$ is a special case of 
the classical 
fact \cite{harmfunctionbook}*{p.~3} that the Laplacian 
on $\mbr^N$ commutes with an orthogonal change of variables.
The proof carries over to $R[X_1,\ldots,X_N]$.  By the chain rule, 
\begin{equation}\label{chain}
\partial_k(p(XA)) = \sum_{i=1}^N (\partial_i p)(XA)a_{ki}
\end{equation} 
for all $p$, where $A = (a_{ij})$.  Therefore
\begin{eqnarray*}
\Delta(p(XA)) & = & \sum_{k=1}^N \partial_k(\partial_k(p(XA))) \\
& = & \sum_{k=1}^N \partial_k\left(\sum_{i=1}^N a_{ki}(\partial_i p)(XA)\right) \text{ by } (\ref{chain}) \\
& = & \sum_{i,k} a_{ki} \sum_{j=1}^N a_{kj}(\partial_j(\partial_i p))(XA)  \text{ by } (\ref{chain}) \\
& = & \sum_{i,j} \left(\sum_{k} a_{ki}a_{kj}\right) (\partial_j(\partial_ip))(XA) \\
& = & \sum_{i} (\partial_i^2 p)(XA) \text{ since } A \in \Ort_N(R) \\
& = & (\Delta p)(XA).
\end{eqnarray*}
\end{proof}

       \begin{lemma}\label{L:pairwise}
       Let $p_1,\ldots, p_{2n}$ be polynomials in $X_1, \ldots, X_N$ that are homogeneous and linear. Then
       \begin{equation}\label{E:pair}
       \lambda_0(p_1\cdots p_{2n})=  N^n  s_{2n,N}\sum_{\pi\in P_{2n}}\prod_{\{a,b\}\in \pi}\lambda_0(p_ap_b),
       \end{equation}
       where $P_{2n}$ is the set of partitions of $\{1,\ldots, 2n\}$ into two-element subsets.
       \end{lemma}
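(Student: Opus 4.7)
The plan is to reduce the identity to the special case where each $p_i$ is one of the indeterminates $X_1,\ldots,X_N$, using multilinearity, and then to match both sides against the known formula (\ref{E:monint}) for $\lambda_0$ on monomials together with a counting argument for pairings.

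First I would observe that both sides of (\ref{E:pair}) are $R$-multilinear functions of the tuple $(p_1,\ldots,p_{2n})$ in the free $R$-module of linear forms $RX_1+\cdots+RX_N$. Multilinearity of the left side is immediate from the fact that $\lambda_0$ is $R$-linear. For the right side, each pairwise factor $\lambda_0(p_ap_b)$ is bilinear in $(p_a,p_b)$, and in the sum $\sum_{\pi}\prod_{\{a,b\}\in\pi}\lambda_0(p_ap_b)$ every index $i\in\{1,\ldots,2n\}$ occurs in exactly one pair of $\pi$, so the whole right side is multilinear in $(p_1,\ldots,p_{2n})$.

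Therefore it suffices to verify (\ref{E:pair}) in the case $p_i=X_{k_i}$ for some indices $k_1,\ldots,k_{2n}\in\{1,\ldots,N\}$. For such a choice, let $m_v$ denote the number of indices $i$ with $k_i=v$, so $\sum_v m_v=2n$. By (\ref{E:monint}), the left side equals $0$ if any $m_v$ is odd, and otherwise equals $(m_1-1)!!\cdots(m_N-1)!!\,s_{2n,N}$ (using $(-1)!!=1$ for the factors with $m_v=0$). For the right side I would use (\ref{E:lambda0Xj}) to get $\lambda_0(p_ap_b)=\lambda_0(X_{k_a}X_{k_b})=\tfrac{1}{N}\delta_{k_a,k_b}$, so $\prod_{\{a,b\}\in\pi}\lambda_0(p_ap_b)$ equals $N^{-n}$ if every pair $\{a,b\}$ of $\pi$ satisfies $k_a=k_b$, and is $0$ otherwise. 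The number of such ``color-respecting'' pairings of $\{1,\ldots,2n\}$ is $0$ if some $m_v$ is odd, and $\prod_v(m_v-1)!!$ if all $m_v$ are even, since a perfect matching of a $2b$-element set can be chosen in $(2b-1)!!$ ways. Combining these counts, the right side equals $N^n s_{2n,N}\cdot N^{-n}\prod_v(m_v-1)!!=s_{2n,N}\prod_v(m_v-1)!!$ when all $m_v$ are even, and is $0$ otherwise, matching the left side in both cases.

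The argument is essentially Wick's / Isserlis' formula translated into the present algebraic setting, and the only step with any moving parts is the bookkeeping in the monomial case. The place to be careful is the normalization: one has to check that $s_{2,N}=1/N$, that $(-1)!!=1$ handles the unused variables, and that the combinatorial count of color-respecting pairings really gives the product of double factorials $\prod_v(m_v-1)!!$. Once these pieces are in place, matching the two sides on monomials is straightforward and multilinearity promotes the identity to arbitrary linear forms $p_1,\ldots,p_{2n}$.
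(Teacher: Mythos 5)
Your proposal is correct and follows essentially the same route as the paper's proof: reduce by multilinearity to the case where each $p_i$ is an indeterminate, use $\lambda_0(X_jX_k)=\delta_{jk}/N$ and the $(2b-1)!!$ count of perfect matchings to evaluate the right side, and match against (\ref{E:monint}). The only cosmetic difference is that the vanishing of the left side when some multiplicity is odd is more directly a consequence of Proposition \ref{P:sphmean} than of (\ref{E:monint}) itself, but this does not affect the argument.
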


       This result is the spherical counterpart of a feature of the standard Gaussian measure on $\mbr^N$ (for the relationship with Gaussian measure see \cite{PolyHigh2018}*{Theorem 2.1}).
       
       By Remark \ref{R:rksph}, formula (\ref{E:pair}) holds for a general spherical mean $\lambda$ if the right hand side is multiplied by a scaling term in $R$ that depends only on $n$.
       
       \begin{proof} Since both sides of (\ref{E:pair}) are multilinear in $(p_1,\ldots, p_{2n})$, checking the identity is reduced to checking it when each $p_j$ is one of $X_1,\ldots, X_N$. 
       
       If some $X_i$ appears an odd number of times among the $p_j$'s then the  left side is 0 by Proposition \ref{P:sphmean} and the right side is 0 since in each product over a partition $\pi$, $X_i$ appears in some factor $\lambda_0(p_ap_b)$ as $p_a$, with $p_b$ not being $X_i$ (then use Proposition \ref{P:sphmean} again).
       
       Now suppose $2b_1$ of the $p_j$'s are $X_1$ and so on up to $2b_N$ of the $p_j$'s being $X_N$.
       If $j \not= k$ then $\lambda_0(X_jX_k)=0$ by 
       Proposition \ref{P:sphmean} and $\lambda_0(X_j^2)=1/N$ by (\ref{E:lambda0Xj}). Thus a product over a partition $\pi$ on the right side in (\ref{E:pair}) is $0$ unless $p_a=p_b$ for each pair $\{a,b\} \in \pi$, and in this  case the product associated to $\pi$ is $1/N^n$. This value is independent of $\pi$, so the right side of (\ref{E:pair}) is $s_{2n,N}$ times the number of partitions of $\{1,\ldots,2n\}$ into pairs $\{a,b\}$ where $p_a = p_b$, with $\{X_j,X_j\}$ appearing $b_j$ times for $j = 1,\ldots,N$. A set of size $2k$ can be partitioned into pairs in $(2k-1)!!$ ways, so the right side of (\ref{E:pair}) is 
       $$
       s_{2n,N}(2b_1-1)!!\cdots (2b_N-1)!!,
       $$
which is $\lambda_0(X_1^{2b_1}\cdots X_N^{2b_N}) = \lambda_0(p_1\cdots p_{2n})$ by 
(\ref{E:monint}). 
       \end{proof}
 
       Using each of the previous two lemmas we will prove the rotation-invariance of $\lambda_0$.

       \begin{prop}\label{orth-lambda}
    For a commutative ring $R$ containing $\mbq$ and every $N\times N$  matrix $A$ with entries in the ring $R$ that is orthogonal, in the sense that ${A^\top} A = I_N$,  $\lambda_0\bigl(p(XA)\bigr) = \lambda_0\bigl(p(X)\bigr)$ for all $p \in R[X_1,\ldots,X_N]$. 
    \end{prop}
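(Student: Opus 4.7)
By $R$-linearity of $\lambda_0$ and the fact that the substitution $X \mapsto XA$ preserves total degree, it suffices to prove the identity when $p$ is homogeneous. The odd-degree case is trivial because both $p$ and $p(XA)$ are then odd-degree polynomials, on which $\lambda_0$ vanishes by Proposition~\ref{P:sphmean}. The degree-zero case is also trivial since then $p(XA)=p\in R$. So the issue is the case of $p$ homogeneous of even degree $2n\geq 2$, and the plan is to present two independent arguments for this case, each built on one of the two preceding lemmas.

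For the first proof, apply the formula of Proposition~\ref{P:l0Del} to both $p$ and $p(XA)$, which are homogeneous of the same even degree $2n$:
$$
\lambda_0\bigl(p(XA)\bigr)
= c_{n,N}\,\Delta^n\bigl(p(XA)\bigr), \qquad
\lambda_0(p) = c_{n,N}\,\Delta^n p,
$$
where $c_{n,N}=1/\bigl(n!2^n(2n+N-2)(2n+N-4)\cdots N\bigr)$. By Lemma~\ref{LapOrt-commute}, $\Delta^n(p(XA))=(\Delta^n p)(XA)$, and since $\Delta^n p$ has degree $0$ it lies in $R$, so $(\Delta^n p)(XA)=\Delta^n p$. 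Thus $\lambda_0(p(XA)) = \lambda_0(p)$.

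For the second proof, note that any monomial of degree $2n$ is automatically a product $p_1\cdots p_{2n}$ of linear forms (namely, the variables themselves), so by $R$-linearity it suffices to handle such products. Under $X\mapsto XA$ each $p_j$ is sent to another linear form $p_j(XA)$, so by Lemma~\ref{L:pairwise} applied to both sides the identity reduces to showing that
$$
\lambda_0\bigl(\ell_1(XA)\,\ell_2(XA)\bigr)=\lambda_0(\ell_1\ell_2)
$$
for all linear forms $\ell_1,\ell_2\in R[X_1,\ldots,X_N]$. For $\ell_1 = \sum_i c_iX_i$ and $\ell_2=\sum_i d_iX_i$, Proposition~\ref{P:sphmean} together with (\ref{E:lambda0Xj}) gives $\lambda_0(\ell_1\ell_2)=\tfrac{1}{N}\sum_i c_id_i$. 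Writing $\ell_j(XA)$ in the variables $X_k$ using $(XA)_i=\sum_k X_kA_{ki}$ and applying the same computation to the rotated forms produces the coefficient sum $\tfrac{1}{N}\sum_{i,k}c_id_k(A^\top A)_{ik}$, which collapses to $\tfrac{1}{N}\sum_i c_id_i$ because $A^\top A=I_N$. Hence the two spherical means agree.

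The only real work in either route is already done: Lemma~\ref{LapOrt-commute} (which uses nothing beyond the chain rule and $A^\top A=I_N$) powers the first proof, and the pairing formula Lemma~\ref{L:pairwise} reduces the second proof to the manifestly orthogonally invariant quadratic form $\sum_i c_id_i$. The main conceptual point to keep in mind is that the orthogonality condition $A^\top A=I_N$ is exactly what is needed to force either $\Delta$ or the bilinear form $(\ell_1,\ell_2)\mapsto \lambda_0(\ell_1\ell_2)$ to be invariant under the substitution; no finer information about $A$ enters.
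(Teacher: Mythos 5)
Your proof is correct and follows essentially the same route as the paper, which likewise gives the two arguments you describe: one via Proposition \ref{P:l0Del} together with Lemma \ref{LapOrt-commute}, and one via the pair-product formula of Lemma \ref{L:pairwise} reducing to the orthogonal invariance of $\lambda_0$ on products of two linear forms. Your second argument is phrased for general linear forms $\ell_1,\ell_2$ rather than just the coordinate products $X_jX_k$, but the computation collapsing to $A^{\top}A=I_N$ is the same.
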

    In view of Remark \ref{R:rksph}, this result holds for any spherical mean $\lambda$.
    \begin{proof} 
  Both sides of the desired formula $\lambda_0\bigl(p(XA)\bigr) = \lambda_0\bigl(p(X)\bigr)$  are linear in $p$, so it suffices to verify this formula when $p$ is a monomial. 
    
    If $p$ is a monomial of odd degree then $\lambda_0(p(X)) = 0$ by Proposition \ref{E:lambdv}. Since $p(XA)$ is a linear combination of odd-degree monomials, also $\lambda_0(p(XA)) = 0$, so the proposition is established in this case.

    Now suppose $p$ is a monomial of even degree. 
    
    \underline{Method 1}. By Proposition \ref{P:l0Del}, when 
    $p$ is homogeneous of even degree $2n$, 
    $\lambda_0(p(X))$ is a multiple of  
    $\Delta^n p \in R$, where the the multiplier 
    is determined by $N$ and $n$. Since $p(XA)$ 
    is homogeneous of the same degree as $p(X)$, $\lambda_0(p(XA))$ is the same scalar multiple of 
    $\Delta^n(p(XA))$, so it suffices
    to show $\Delta^n(p(X)) = \Delta^n(p(XA))$. By 
    Lemma \ref{LapOrt-commute}, 
    $\Delta^n(p(XA))$ equals $(\Delta^n p)(XA)$, which is 
    $(\Delta^n p)(X)$ since $\Delta^n p \in R$.

    \underline{Method 2}. In view of the pair-product formula (\ref{E:pair}) in Lemma \ref{L:pairwise}, it suffices to assume that the polynomial $p(X)$ is a product of two homogeneous linear factors. Thus, without loss of generality, we may assume that $p(X)=X_jX_k$, for some $j,k\in\{1,\ldots, N\}$ that are not necessarily distinct. In this case, we have 
       \begin{equation}
       \begin{split}
           \lambda_0\bigl((XA)_j(XA)_k\bigr)&=\sum_{\ell, m=1}^NA_{\ell j}A_{mk}\lambda_0(X_\ell X_m)\\
           &=\sum_{\ell, m=1}^NA_{\ell j}A_{mk}\frac{\delta_{\ell m}}{N} \\
           &=\sum_{m=1}^NA_{mj}A_{mk}\frac{1}{N}\\
           &=\frac{1}{N}\delta_{jk}\\
           &=\lambda_0(X_jX_k).
           \end{split}
       \end{equation}
       Thus $\lambda_0\bigl(p(XA)\bigr)$ is indeed equal to $\lambda_0\bigl(p(X)\bigr)$ for all $p$.
 \end{proof}

 \section{Spherical harmonics}\label{s:sh}

 In this section $R$ is a ring such that positive integer multiples  of $1_R$ are not zero divisors ({\it e.g.}, $R \supset \mbq$). 
 We follow some of the ideas in \cite{PolyHigh2018}.  
 By a {\em spherical harmonic} we mean a homogeneous harmonic polynomial.  
 
Denote by $\mcz_N(c)$ the ideal generated by ${\mbx}-c$:
\begin{equation}
\mcz_N(c) = \hbox{ideal generated by ${\mbx}-c$ in $R[X_1,\ldots, X_N]$.}
\end{equation}
 Recall 
 $\mcp_N = R[X_1,\ldots,X_N]$

\begin{prop}\label{P:homspher} 
Let $R$ be a commutative ring, $c\in R$ not a zero-divisor, and $\mcz_N(c)$ the ideal in $\mcp_N=R[X_1,\ldots, X_N]$ generated by $\mbx-c$. Then the quotient map $\mcp_N\to \mcp_N/\mcz_N(c)$ is injective when restricted to homogeneous polynomials 
of the same degree. 
\end{prop}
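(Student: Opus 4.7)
The statement I want to prove is that if $p \in \mcp_N$ is homogeneous of degree $d$ and lies in $\mcz_N(c)$, then $p = 0$. (Equivalently, two homogeneous polynomials of the same degree that are congruent modulo $\mcz_N(c)$ must be equal.)

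My plan is a direct degree-counting argument. Write $p = (\mbx - c)q$ with $q \in \mcp_N$, and decompose $q$ into its homogeneous components $q = q_{k_0} + q_{k_0+1} + \cdots + q_m$, where $q_{k_0}$ and $q_m$ are the lowest and highest nonzero homogeneous parts (assuming $q \neq 0$). Then I expand
\[
(\mbx - c)q = \sum_{k=k_0}^{m} (\mbx) q_k \; - \; \sum_{k=k_0}^{m} c\, q_k,
\]
so the homogeneous component of $(\mbx - c)q$ of top degree $m+2$ is $(\mbx)q_m$, and the homogeneous component of bottom degree $k_0$ is $-c\,q_{k_0}$.

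The key observations are the two non-vanishing facts: $(\mbx)q_m \neq 0$ because $\mbx$ is monic in $X_N$ hence not a zero divisor (this is the content of (\ref{degXh}) recalled earlier), and $c\, q_{k_0} \neq 0$ because multiplication by $c \in R$ extends to a non-zero-divisor on $R[X_1,\ldots,X_N]$ (by examining coefficients, $c g = 0$ in $\mcp_N$ forces each coefficient of $g$ to vanish, as $c$ is not a zero divisor in $R$).

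Since $p$ is homogeneous of degree $d$, all homogeneous components of $(\mbx - c)q$ outside degree $d$ must vanish. The top and bottom components just identified are nonzero when $q \neq 0$, so we are forced to have $m+2 = d = k_0$. But $k_0 \leq m$, so $k_0 = m+2$ is impossible, a contradiction. Hence $q = 0$ and therefore $p = 0$.

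The main (and only real) obstacle is making sure the zero-divisor hypotheses are used correctly in both directions: once to prevent cancellation in the leading term $(\mbx)q_m$, and once to prevent vanishing of the constant-in-$\mbx$ term $c q_{k_0}$. Everything else is purely bookkeeping on homogeneous components.
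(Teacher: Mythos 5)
Your proof is correct and takes essentially the same route as the paper: write $p=(\mbx-c)q$ and derive a contradiction from comparing degrees, using that $\mbx$ (monic in $X_N$) and $c$ are both non-zero-divisors. The paper states this more tersely (``the two terms $(\mbx)q$ and $cq$ have different degrees''), while you make the same idea precise by isolating the nonzero top component $(\mbx)q_m$ and bottom component $-c\,q_{k_0}$; no substantive difference.
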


The analytic counterpart of this result is that a homogeneous function on $\mbr^n$ of a specified degree is completely determined by its restriction to a sphere of nonzero radius centered at the origin.

\begin{proof} 
The difference of homogeneous polynomials 
of the same degree is still homogeneous, so 
it suffices to prove that $0$ is the only homogeneous polynomial  in $\mcz_N(c)$. 

Suppose $p\in \mcz_N(c)$ is homogeneous of degree $m$. Then 
\begin{equation}\label{E:pxq}
    p=(\mbx-c)q  =(\mbx)q-cq,
    \end{equation}
    for some $q\in R[X_1,\ldots, X_N]$. If $q\neq 0$ then, since $c$ is not a zero-divisor, the two terms on the right side are of different degrees, contradicting the homogeneity of $p$. Thus $q=0$ and hence $p=0$.
    \end{proof}

\subsection{Commutation relations and \texorpdfstring{$L_c$}{Lc} operators}\label{crsec}
 
The following result and its proof are from \cite{PolyHigh2018}. 
  
    \begin{prop}\label{P:mjkharm} 
    The following commutation relations hold among operators on  
    $R[X_1,\ldots,X_N]$: 
     \begin{equation}\label{E:XnormMjk}
  {\mathcal M}_{\mbx}M_{jk}=M_{jk}{\mathcal M}_{\mbx},
   \end{equation}
   where ${\mathcal M}_{\mbx}$ is the operator that multiplies polynomials by $\mbx$, and 
   \begin{equation}\label{E:MjkDeltaN}
   \Delta_N M_{jk}=M_{jk}\Delta_N, 
   \end{equation}
   so each $M_{jk}$ maps $\mch_{N,d}$ into itself, and 
   \begin{equation}\label{Mnormcommuteeqn}
   M_{jk}({\mbmn})= ({\mbmn})M_{jk}. 
   \end{equation}
       \end{prop}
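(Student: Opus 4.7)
I would prove the three commutation identities in the order listed; each follows from tools already established. The first identity $\mathcal{M}_{\mbx} M_{jk} = M_{jk}\mathcal{M}_{\mbx}$ is a restatement of (\ref{mjkxx}): by the derivation property of $M_{jk}$ together with $M_{jk}(\mbx)=0$ from (\ref{E:Mjkmbx}), one has $M_{jk}((\mbx)p) = M_{jk}(\mbx)\cdot p + (\mbx)M_{jk}(p) = (\mbx)M_{jk}(p)$ for every $p\in\mcp_N$, which is exactly (\ref{E:XnormMjk}).

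For the second identity I would reduce $[\Delta_N,M_{jk}]$ to commutators with each summand $\partial_l^2$. A Leibniz computation on $M_{jk} = X_j\partial_k - X_k\partial_j$ gives $[\partial_l, M_{jk}] = \delta_{lj}\partial_k - \delta_{lk}\partial_j$, so only $l\in\{j,k\}$ contributes. Squaring via $[\partial_l^2, M_{jk}] = \partial_l[\partial_l, M_{jk}] + [\partial_l, M_{jk}]\partial_l$ yields $[\partial_j^2, M_{jk}] = 2\partial_j\partial_k$ and $[\partial_k^2,M_{jk}] = -2\partial_j\partial_k$, which cancel on summation, proving (\ref{E:MjkDeltaN}). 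The consequence that $M_{jk}$ sends $\mch_{N,d}$ into itself is then immediate, since $M_{jk}$ preserves the degree grading (each of its two summands is of the form $X\partial$) and $\Delta_N p = 0$ implies $\Delta_N(M_{jk}p) = M_{jk}(\Delta_N p) = 0$.

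For the third identity I would expand $\mbmn = \sum_{\{l,m\}\in P_2(N)} M_{lm}^2$ and apply the commutator relations (\ref{E:MjkM2comm}) term by term. Pairs $\{l,m\}$ disjoint from $\{j,k\}$ commute with $M_{jk}$, and the pair $\{l,m\}=\{j,k\}$ is trivial, so only pairs sharing exactly one index with $\{j,k\}$ matter; these come in the two families $\{j,n\}$ and $\{k,n\}$ with $n\notin\{j,k\}$. Using (\ref{E:MjkM2comm}) and the antisymmetry $M_{ab}=-M_{ba}$ one obtains $[M_{jk}, M_{jn}] = -M_{kn}$ and $[M_{jk}, M_{kn}] = M_{jn}$, so the total contribution of a fixed $n$ to $[M_{jk}, \mbmn]$ is
\[
-M_{jn}M_{kn} - M_{kn}M_{jn} + M_{kn}M_{jn} + M_{jn}M_{kn} = 0.
\]
Summing over $n$ then establishes (\ref{Mnormcommuteeqn}).

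The only delicate step is the sign bookkeeping in the third identity: applying the middle relation of (\ref{E:MjkM2comm}) requires placing the two $M$-operators so that the shared index sits in the ``middle'', and it is cleanest to record $[M_{jk}, M_{jn}]$ and $[M_{jk}, M_{kn}]$ separately before combining, which makes the pairwise cancellation transparent.
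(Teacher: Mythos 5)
Your proposal is correct, and for the first identity it coincides with the paper's proof (both just invoke (\ref{mjkxx})). Where you genuinely diverge is in the proof of (\ref{E:MjkDeltaN}): you compute $[\Delta_N, M_{jk}]$ directly, reducing it to $[\partial_l^2, M_{jk}]$ via the Leibniz rule for commutators and observing that the two nonzero contributions $2\partial_j\partial_k$ and $-2\partial_j\partial_k$ cancel. The paper instead argues indirectly: it restricts to homogeneous polynomials of fixed degree, uses the identity (\ref{magicformula}) to write $\mathcal{M}_{\mbx}\Delta_N = (r\partial_r)^2 + (N-2)r\partial_r + \mbmn$, notes that $r\partial_r$ acts as a scalar there and that $\mbmn$ commutes with $M_{jk}$ by (\ref{Mnormcommuteeqn}), concludes that $\mathcal{M}_{\mbx}\Delta_N$ commutes with $M_{jk}$, and then cancels the factor $\mbx$ using (\ref{E:XnormMjk}) and the fact that $\mbx$ is not a zero divisor. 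Your route is more elementary and self-contained --- it needs neither (\ref{magicformula}) nor the non-zero-divisor property of $\mbx$, and it does not make (\ref{E:MjkDeltaN}) logically depend on (\ref{Mnormcommuteeqn}). The paper's route buys a slick illustration of how the central identity (\ref{magicformula}) can be exploited, at the cost of a longer chain of dependencies. For the third identity the paper simply declares it ``readily verified by computation'' with a pointer to (\ref{E:MjkM2comm}), whereas you actually carry out that computation; your bookkeeping $[M_{jk}, M_{jn}] = -M_{kn}$, $[M_{jk}, M_{kn}] = M_{jn}$, followed by the expansion $[A,B^2]=[A,B]B+B[A,B]$ and the four-term cancellation, is correct, so your write-up is in this respect more complete than the paper's.
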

   \begin{proof}  
   By (\ref{mjkxx}), $M_{jk}((\mbx)p)=(\mbx)M_{jk}p$.
   Thus (\ref{E:XnormMjk}) holds.

   The identity (\ref{Mnormcommuteeqn}) is readily verified by computation,  as already noted in (\ref{E:MjkM2comm}).

   To prove (\ref{E:MjkDeltaN}), it suffices to check both sides are equal on homogeneous polynomials of a fixed degree.
   Consider the expression for 
   ${\mathcal M}_{\mbx}\Delta_N$ in (\ref{magicformula}): $r\partial_r$ acts as a scalar on homogeneous polynomials of a fixed degree and therefore it  commutes with the action of each $M_{jk}$ on those polynomials. Similarly, $\mbmn$ commutes with each $M_{jk}$. 
   Thus (\ref{magicformula}) tells us that 
   ${\mathcal M}_{\mbx}\Delta_N$ commutes with each $M_{jk}$. Since $M_{jk}$ commutes with 
   ${\mathcal M}_{\mbx}$ by (\ref{E:XnormMjk}), 
   it follows that for each homogeneous polynomial $p$, 
   $M_{jk}(\mbx)(\Delta_N(p))) = 
   (\mbx)(\Delta_N(M_{jk}(p)))$, or 
   $$(\mbx)(M_{jk}(\Delta_N(p))) = 
  (\mbx)(\Delta_N(M_{jk}(p))).$$
   Dividing by 
   $\mbx$, we get 
   $M_{jk}(\Delta_N(p)) = \Delta_N(M_{jk}(p))$.
   
   From (\ref{E:MjkDeltaN}), $M_{jk}p$ is harmonic whenever $p$ is harmonic.   \end{proof}

\begin{prop}\label{P:defLc} Suppose $R$ is a ring containing $\mbq$, and $c\in R$. Let $\mch_N=\ker\Delta_N$ denote the submodule of harmonic polynomials in $\mcp_N$. Then the inclusion map $\mch_N\to\mcp_N$ induces  an isomorphism of $R$-modules:
 \begin{equation}
 H_c: \mch_N\to \mcp_N/\mcz_N(c).
 \end{equation}
 Moreover, there is a unique $R$-linear mapping
 \begin{equation}\label{E:defLc}
 L_c: \mcp_N\to \mch_N
 \end{equation}
 which restricts to the identity map on $\mch_N$ and  is $0$ on $\mcz_N(c)$. Furthermore, 
 \begin{itemize}
\item[(i)] $\ker L_c=\mcz_N(c)$;
 \item[(ii)] $H_c\circ L_c$ is the projection map $\mcp_N\to\mcp_N/\mcz_N(c)$;
 \item[(iii)]  $L_c$ commutes with each operator $M_{jk}$.  
 
 \end{itemize}
 \end{prop}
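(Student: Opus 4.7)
The plan is to bootstrap everything from the harmonic decomposition already available: first establish that $H_c$ is an isomorphism, and then define $L_c$ essentially as $H_c^{-1}$ composed with the quotient map $\pi \colon \mcp_N \to \mcp_N/\mcz_N(c)$.

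For the injectivity of $H_c$, I would observe that if $p \in \mch_N$ lies in $\mcz_N(c)$, then $p$ is a harmonic multiple of $\mbx - c$, so Proposition \ref{P:kerDelt2} forces $p = 0$. For surjectivity, given any $p \in \mcp_N$, Proposition \ref{P:harmdecomp1} yields a unique expression
\begin{equation*}
p = p_0 + (\mbx)p_1 + \cdots + (\mbx)^s p_s
\end{equation*}
with each $p_j$ harmonic. Since $\mbx \equiv c$ modulo $\mcz_N(c)$, the polynomial $p_* \stackrel{\rm def}{=} p_0 + c p_1 + \cdots + c^s p_s$ is harmonic and satisfies $p \equiv p_* \pmod{\mcz_N(c)}$, giving surjectivity. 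Combined, $\mcp_N = \mch_N \oplus \mcz_N(c)$ as $R$-modules.

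Once $H_c$ is an isomorphism, I would define $L_c = H_c^{-1} \circ \pi$; equivalently, $L_c(p) = p_*$ in the notation above. This is $R$-linear, restricts to the identity on $\mch_N$ (since then $p = p_* = p$), and vanishes on $\mcz_N(c)$ (since $p_* = 0$ when $p \in \mcz_N(c)$, by injectivity of $H_c$). Uniqueness of $L_c$ follows from the decomposition $\mcp_N = \mch_N \oplus \mcz_N(c)$: any $R$-linear map with the two listed properties must send $p = p_* + z$ (with $p_* \in \mch_N$ and $z \in \mcz_N(c)$) to $p_*$. Properties (i) and (ii) are then immediate, since $L_c(p) = 0 \iff p_* = 0 \iff p \in \mcz_N(c)$, and $H_c(L_c(p)) = H_c(p_*) = p_* + \mcz_N(c) = \pi(p)$.

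For (iii), the key point is that each $M_{jk}$ preserves both summands of $\mcp_N = \mch_N \oplus \mcz_N(c)$. Preservation of $\mch_N$ is Proposition \ref{P:mjkharm} ($M_{jk}$ commutes with $\Delta_N$). For $\mcz_N(c)$, since $M_{jk}(\mbx) = 0$ by \eqref{E:Mjkmbx} and $M_{jk}(c) = 0$, the derivation property gives $M_{jk}((\mbx - c)q) = (\mbx - c)M_{jk}(q) \in \mcz_N(c)$. Hence writing $p = p_* + z$ yields $M_{jk}p = M_{jk}p_* + M_{jk}z$ as the same type of decomposition, so $L_c(M_{jk}p) = M_{jk}p_* = M_{jk}(L_c p)$. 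I do not anticipate a genuine obstacle: the entire argument is essentially a routine assembly of Propositions \ref{P:harmdecomp1}, \ref{P:kerDelt2}, and \ref{P:mjkharm}, and the only care needed is verifying that both subspaces $\mch_N$ and $\mcz_N(c)$ are stable under $M_{jk}$ before invoking the direct sum decomposition.
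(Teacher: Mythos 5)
Your proposal is correct and follows essentially the same route as the paper: injectivity of $H_c$ from Proposition \ref{P:kerDelt2}, surjectivity and the formula $L_c(p)=p_0+cp_1+\cdots+c^sp_s$ from Proposition \ref{P:harmdecomp1}, and commutation with $M_{jk}$ from $M_{jk}(\mbx)=0$ together with Proposition \ref{P:mjkharm}. Your packaging of (iii) via stability of the two summands in $\mcp_N=\mch_N\oplus\mcz_N(c)$ is a slightly tidier rendering of the paper's argument, which instead applies $M_{jk}$ to the base-$\mbx$ expansion and invokes its uniqueness.
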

 Item (ii), combined with $H_c(p)=p+\mcz_N(c)$, means that
 \begin{equation}\label{E:Lcharm}
 L_cp\equiv p \,\hbox{mod $\mcz_N(c)$. }
 \end{equation}
 \begin{proof} By Proposition \ref{P:kerDelt2}, if $p\in\mch_N$ is a polynomial multiple of ${\mbx}-c$ then $p=0$; thus $\ker H_c=0$. Next, if $p\in\mcp_N$ then we can write $p$ as
 \begin{equation}\label{E:ppexp}
 p=p_d +({\mbx})p_{d-2} +\cdots +(\mbx)^{n}p_{d-2n},
 \end{equation}
 where each $p_j$ is harmonic,
and so $p$ is equal, modulo ${\mbx}-c$, to the harmonic polynomial $p_d +c p_{d-2} +\cdots +c^{n}p_{d-2n}$. Thus $H_c$ is surjective. With this notation, we can write $p$ in the form
\begin{equation}\label{E:ppcexp}
p=(p_d +c p_{d-2} +\cdots +c^{n}p_{d-2n})+({\mbx}-c)q
\end{equation}
and so $L_c$ is given uniquely by
\begin{equation}\label{E:defLc2}
L_c(p)= p_d +cp_{d-2} +\cdots +c^{n}p_{d-2n}.
\end{equation}
Then 
\begin{equation}
H_c(L_cp)=L_c(p)+\mcz_N(c)=p+\mcz_N(c).
\end{equation}
 
   Applying $M_{jk}$ to the expansion (\ref{E:ppexp})
and using the product rule $M_{jk}(fg)=(M_{jk}f)g+f(M_{jk}g)$ and the fact that $M_{jk}(\mbx)=0$, we have
   \begin{equation}\label{E:Mjkp}
   M_{jk}p=M_{jk}p_d +({\mbx})M_{jk}p_{d-2}+\cdots +(\mbx)^{n}M_{jk}p_{d-2n}.
   \end{equation}
   Here each of the terms $M_{jk}p_{s}$ is harmonic and of the same degree as $p_s$. Hence, (\ref{E:Mjkp}) is the expansion of $M_{jk}p$ in `base' ${\mbx}$, the counterpart of   (\ref{E:ppexp}) for $M_{jk}p$; as seen in Proposition \ref{P:harmdecomp1}, this expansion is unique. Then, by the  definition of $L_c$ in (\ref{E:defLc}), we have: 
   \begin{equation}\label{eqn5432}
   L_cM_{jk}p= M_{jk}p_d +cM_{jk}p_{d-2}+\cdots + c^{n}M_{jk}p_{d-2n}.
   \end{equation}
   Rewriting the right side of (\ref{eqn5432}) with $M_{jk}$ on the left in each term, we get 
   \begin{equation}\label{E:LaMjk}
   L_cM_{jk}p=M_{jk}L_cp.
   \end{equation}

Since each $M_{jk}$ maps  $\mch_{N,d}$ into itself 
(Proposition \ref{P:mjkharm}), for every $d\geq 0$, it follows that $L_c$ commutes with each $M_{jk}$. 
  \end{proof}

 \subsection{Zonal harmonics}\label{ss:zon}
  By a {\em zonal harmonic} we mean a spherical harmonic   that is congruent modulo $\mcz_N(c)$ to a polynomial of the form  
  $q(t_1X_1+\cdots +t_NX_N)$, with $q \in R[T]$ and $t = (t_1,\ldots, t_N) \in R^N$. When $N=3$ this agrees with the terminology in classical analysis. In higher dimensions there are other ways of defining this notion, but for our purposes we choose this more restrictive meaning. 
   
     In the following we use the notation
   \begin{equation}\label{txeqn}
   t\cdot X=t_1X_1+\cdots +t_NX_N\qquad\hbox{and}\qquad t^2=t_1^2+\cdots +t_N^2, 
   \end{equation} 
so $t^2 = t \cdot t$. 

A case of interest is a spherical harmonic that coincides, modulo $\mcz_N(c)$, with a polynomial in the variable $X_1$.   These are spherical harmonics that have rotational symmetry around the axis $(1,0,\ldots, 0)$. Let us verify this. We need to assume that $N\geq 3$. By Remark \ref{rk1}, if $R$ is a ring in which 
positive integer multiples  of $1_R$ are not zero divisors, 
$N\geq 3$, and a polynomial $p\in R[X_1,\ldots, X_N]$ is annihilated by the operators $M_{jk}$ for distinct $j, k \in\{2,\ldots, N\}$ then
\begin{equation}
p=q_*(X_2^2+\cdots+X_N^2),
\end{equation}
where $q_*$ is a polynomial in one variable with coefficients in $R[X_1]$. Thus,
\begin{equation}\label{E:pqX1poly}
p \equiv q(X_1, c-X_1^2) \bmod \mcz_N(c),
\end{equation}
for some polynomial $q$ in two indeterminates, with coefficients in $R$. Thus, for $N>2$, if $p$ is a spherical harmonic that is annihilated by the operators $M_{jk}$ for  all $j,k\in\{2,\ldots, N\}$ with $j \not= k$ then $p$ is a zonal harmonic, corresponding to $t$ being $(1,0,\ldots, 0)$.

\begin{prop}\label{P:Lpq0} Suppose $R$ is a ring containing $\mbq$. Let $L$ denote an arbitrary $R$-linear derivation on $\mcp_N=R[X_1,\ldots, X_N]$ of degree $n\geq -1$, in the sense  that $L$ maps $\mcp_{N,m}$ to $\mcp_{N, m+n}$ for all $m$. Suppose 
$p = t \cdot X$ as in $(\ref{txeqn})$  
with rational coefficients that are not all $0$.  If $q \in \mcp_N$ has the property that $Lp = 0 \Rightarrow Lq = 0$ as $L$ runs over all $R$-linear derivations of $\mcp_N$ as described above, then there is a polynomial $f \in R[T]$ such that $q=f(p)$, and if $q$ is homogeneous then $q$ is an $R$-multiple of a power of $p$. \end{prop}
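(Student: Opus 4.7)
The plan is to extract first-order partial differential equations for $q$ by applying the hypothesis only to simple degree-$(-1)$ derivations, and then to solve these equations by a linear change of variables. Higher-degree derivations will not be needed.

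For the first step, for each pair $i,j \in \{1,\ldots,N\}$ consider $L_{ij} := t_i\partial_j - t_j\partial_i$, which is an $R$-linear derivation of degree $-1$ with $L_{ij}(p) = t_i t_j - t_j t_i = 0$. The hypothesis then gives $L_{ij}(q) = 0$, i.e., $t_i \partial_j q = t_j \partial_i q$ for all $i,j$. Since $t = (t_1,\ldots,t_N)$ is nonzero with rational entries, I may assume (after relabeling the $X_j$ if necessary) that $t_1 \neq 0$, so $t_1$ is a unit in $R$. Setting $s_j := t_j/t_1 \in \mbq \subseteq R$ with $s_1 = 1$, the identity at $i = 1$ becomes
$$\partial_j q = s_j \partial_1 q \quad \text{for } j = 2,\ldots,N.$$

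Next, let $\Phi \colon R[X_1,\ldots,X_N] \to R[Y_1,\ldots,Y_N]$ be the $R$-algebra isomorphism determined by $X_1 \mapsto Y_1 - s_2 Y_2 - \cdots - s_N Y_N$ and $X_j \mapsto Y_j$ for $j \geq 2$, with inverse $Y_1 \mapsto X_1 + s_2 X_2 + \cdots + s_N X_N$ and $Y_j \mapsto X_j$ for $j \geq 2$. Then $\Phi(p) = t_1 Y_1$, and for $\tilde{q} := \Phi(q)$ the chain rule yields $\partial_{Y_j}\tilde{q} = \Phi(\partial_j q - s_j \partial_1 q) = 0$ for $j \geq 2$. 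Because $R \supseteq \mbq$, every positive integer is a unit in $R$, so the condition $\partial_{Y_j}\tilde{q} = 0$ for $j \geq 2$ forces $\tilde{q} \in R[Y_1]$ by comparing coefficients term by term. Thus $\tilde{q} = f(Y_1)$ for some $f \in R[T]$, and applying $\Phi^{-1}$ gives $q = f(p/t_1) = \tilde{f}(p)$, where $\tilde{f}(T) := f(T/t_1) \in R[T]$ since $1/t_1 \in \mbq$.

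For the homogeneous case, expanding $\tilde{f}(T) = \sum_k c_k T^k$ presents $q$ as a sum $\sum_k c_k p^k$ of polynomials of distinct degrees $k$. Homogeneity of $q$ in degree $d$ forces $c_k p^k = 0$ for $k \neq d$; inspecting the coefficient of $X_1^k$ on the left, which is $c_k t_1^k$ with $t_1^k$ a unit, yields $c_k = 0$ and leaves $q = c_d p^d$. The most delicate point is the passage from $\partial_{Y_j}\tilde{q} = 0$ to $\tilde{q} \in R[Y_1]$, which relies crucially on positive integers being units in $R$; that is exactly where the hypothesis $R \supseteq \mbq$ enters, and it would fail in positive characteristic.
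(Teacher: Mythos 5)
Your proof is correct. The first step coincides with the paper's: both arguments feed the same family of test derivations into the hypothesis (you use the constant-coefficient degree-$(-1)$ derivations $t_i\partial_j - t_j\partial_i$; the paper uses $Y t_k\partial_j - Y t_j\partial_k$ with $Y$ a monomial, then cancels $Y$) to arrive at the relations $t_i\partial_j q = t_j\partial_i q$. From there the two proofs genuinely diverge. The paper first reduces to homogeneous $q$, contracts the relations with the Euler operator (multiply by $X_j$, sum over $j$) to get $(\deg q)\,t_k\, q = (\partial_k q)\,p$, peels off one factor of $p$, and finishes by induction on $\deg q$. You instead solve the first-order system outright by the linear substitution $Y_1 = X_1 + s_2X_2+\cdots+s_NX_N$, reducing to the elementary fact that a polynomial over a ring containing $\mbq$ with $\partial_{Y_j}\tilde q=0$ for $j\geq 2$ lies in $R[Y_1]$; the homogeneous case then falls out by comparing coefficients of $X_1^k$. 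Your route buys a shorter argument with no induction and no reduction to homogeneous components, and it makes explicit that only the constant-coefficient degree-$(-1)$ derivations are ever needed (a mild sharpening of the hypothesis); the paper's route stays entirely inside the graded framework it uses elsewhere (Euler operator, degree-by-degree descent) and avoids any change of variables. Both correctly use $\mbq\subseteq R$ at the decisive point — you to invert the exponents when concluding $\tilde q\in R[Y_1]$ (and to invert $t_1$), the paper to invert $(\deg q)\,t_k$.
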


\begin{proof} By focusing on the homogeneous components of $q$ separately, we may assume that $q$ is homogeneous. Moreover, inductively, we assume that the result holds for all homogeneous polynomials $q$ of lower degree (the degree-$0$ case being automatically true).  

  We   write $L$ as
\begin{equation}\label{E:LLj}
L=\sum_{j=1}^NL_j\partial_j,
\end{equation}
where $L_j=L(X_j)$ is a homogeneous polynomial of degree $n+1$ with coefficients in $R$. Thus the given condition is:
\begin{equation}\label{E:LL}
\hbox{if   $\sum_{j=1}^NL_j\partial_jp=0$  then $\sum_{j=1}^NL_j\partial_jq=0$}
\end{equation}
for all choices of homogeneous  polynomials $L_1,\ldots, L_N$ of degree $n+1$. Pick $k$ such that $t_k \not= 0$. 
For any  $j \not= k$ we take $L_i$ to be $0$ for $i \not= j$ or $k$, $L_j=Y\partial_kp = Yt_k$ and $L_k=-Y\partial_jp = -Yt_j$, where $Y$ is a fixed monomial of degree $n+1$; then the hypothesis of (\ref{E:LL}) is satisfied, so 
\begin{equation}
\partial_kp\partial_jq=\partial_jp\partial_kq.
\end{equation}
This holds for all $j, k\in\{1,\ldots, N\}$. Multiplying both sides by $X_j$ and summing over $j$ we obtain, by homogeneity of $p$ and $q$,
\begin{equation}
Aq=Bp,
\end{equation}
where $A= (\deg q)\partial_kp = (\deg q)t_k \in \mbq^\times$ 
and $B= (\deg p)\partial_kq = \partial_kq$. So

$$q=q_1p,$$
where $q_1$ is also homogeneous, of degree one less than $q$. 
Now if $Lp=0$ for $L$ as in the proposition then we have 
$$Lq = (Lq_1)p+q_1(Lp)=0$$
so $Lq_1=0$.  Since the degree of $q_1$ is one less than the degree of $q$, it follows by the induction hypothesis that $q_1$ is an $R$-multiple of a power of $p$. Hence so is $q$. 

\end{proof}

   \begin{remark}
   If $p=q^2$ then $L(p) = 2qL(q)$ for any first-order differential operator $L$, and thus $L(p) = 0 \Rightarrow L(q) = 0$ even though $q$ is not a polynomial in $p$. This does not contradict the proposition since this $p$ is not of degree 1.
   \end{remark}

For a single-variable polynomial $q(Y) \in R[Y]$, let 
   \begin{equation}\label{qtX}
       q_t=q(t\cdot X) \in R[X_1,\ldots,X_N]
       \end{equation} 
   These polynomials are annihilated by the differential operators that annihilate $t\cdot X$. The converse also holds.

    \begin{prop}\label{P:Lc} Let $R$ be an integral domain of characteristic $0$.  
Let $q(Y)\in R[Y]$ be non-zero  and $t = (t_1,\ldots, t_N) \in R^N$ with coordinates not all $0$, and $N\geq 2$. Let $c\in R$ and $L_c:\mcp_N\to\mch_N$  be the linear map as in $(\ref{E:defLc})$. Then the harmonic polynomial $L_cq_t$ is homogeneous of degree $n$ if and only if $q$ satisfies the differential equation
\begin{equation}\label{E:dtdifc}
  [(t\cdot t)c-Y^2] q''(Y) -(N-1)Yq'(Y) +n(n+N-2)q(Y) =0.
\end{equation}
 \end{prop}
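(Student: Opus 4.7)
The strategy is to compare how the quadratic Casimir $\mbmn$ acts on $q_t$ with how it acts on $L_c q_t$. First observe that each rotation generator $M_{jk}$ annihilates $\mbx - c$, so $\mbmn$ preserves the ideal $\mcz_N(c)$; by Proposition \ref{P:defLc}(iii), $L_c$ commutes with every $M_{jk}$, hence with $\mbmn$, and by Proposition \ref{P:defLc}(i) it vanishes on $\mcz_N(c)$. A direct chain-rule calculation gives $\Delta q_t = (t\cdot t) q''(t\cdot X)$, $r\partial_r\, q_t = (t\cdot X) q'(t\cdot X)$, and $(r\partial_r)^2 q_t = (t\cdot X)^2 q''(t\cdot X) + (t\cdot X) q'(t\cdot X)$. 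Substituting into the operator identity (\ref{magicformula}) produces the key formula
\begin{equation}\label{E:MMqtplan}
\mbmn\, q_t = (\mbx)(t\cdot t) q''(t\cdot X) - (t\cdot X)^2 q''(t\cdot X) - (N-1)(t\cdot X) q'(t\cdot X).
\end{equation}

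For the direction $(\Rightarrow)$, I would assume $H := L_c q_t$ is harmonic and homogeneous of degree $n$. Applying (\ref{magicformula}) to $H$ and using $\Delta H = 0$ together with homogeneity gives $\mbmn H = -n(n+N-2) H$. Since $H \equiv q_t \pmod{\mcz_N(c)}$ and $\mbmn$ preserves $\mcz_N(c)$, also $\mbmn H \equiv \mbmn\, q_t \pmod{\mcz_N(c)}$; replacing $\mbx$ by $c$ modulo $\mbx - c$ in (\ref{E:MMqtplan}) and combining, I obtain $F(t\cdot X) \in \mcz_N(c)$, where
$$
F(Y) = \bigl((t\cdot t) c - Y^2\bigr) q''(Y) - (N-1) Y q'(Y) + n(n+N-2) q(Y).
$$
The key lemma below will then force $F = 0$, which is precisely the stated ODE.

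For the direction $(\Leftarrow)$, assuming $F(Y) = 0$, substituting $Y \to t\cdot X$ and feeding the result back into (\ref{E:MMqtplan}) to eliminate the $q''$ and $q'$ terms yields $\mbmn\, q_t + n(n+N-2) q_t = (\mbx - c)(t\cdot t) q''(t\cdot X) \in \mcz_N(c)$. Applying $L_c$ (using $L_c\mbmn = \mbmn L_c$ and the vanishing of $L_c$ on $\mcz_N(c)$) gives $\mbmn (L_c q_t) = -n(n+N-2) L_c q_t$. I then decompose $L_c q_t = \sum_d H_d$ into its homogeneous components; each $H_d$ is harmonic since $\Delta$ respects degree, and (\ref{magicformula}) applied to $H_d$ gives $\mbmn H_d = -d(d+N-2) H_d$. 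Matching degree-by-degree forces $(d-n)(d+n+N-2) H_d = 0$ for every $d$; since $(d-n)(d+n+N-2)$ is a nonzero integer for $d \neq n$ (using $N \geq 2$ and $d, n \geq 0$), it is nonzero in the characteristic-$0$ integral domain $R$, so every $H_d$ with $d \neq n$ vanishes. Therefore $L_c q_t = H_n$ is homogeneous of degree $n$.

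The main technical obstacle is the lemma needed in the $(\Rightarrow)$ direction: if $t \neq 0$ in $R^N$ and $F(Y) \in R[Y]$ satisfies $F(t\cdot X) \in \mcz_N(c)$, then $F = 0$. The plan is to extend scalars to the algebraic closure $\bar K$ of the fraction field of $R$ and show that the image of $t\cdot X$ in $\bar K[X_1, \ldots, X_N]/(\mbx - c)$ is transcendental over $\bar K$. A degree comparison rules out $t\cdot X$ being congruent to any element of $\bar K$ modulo $\mbx - c$; when $\mbx - c$ is irreducible over $\bar K$, which covers all cases except $N = 2$ and $c = 0$, the quotient is an integral domain and the transcendence follows. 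The exceptional case $N = 2$, $c = 0$ is handled by reducing modulo each of the linear factors $X_1 \pm i X_2$ separately.
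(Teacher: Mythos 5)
Your proof is correct and its main line of argument coincides with the paper's: both directions rest on the formula for $\mbmn q_t$ (which you derive from the operator identity (\ref{magicformula}) rather than by direct computation with the $M_{jk}$ — an equivalent route), on the commutation of $L_c$ with $\mbmn$ together with $\ker L_c=\mcz_N(c)$, and, for the converse, on comparing the homogeneous components of $L_cq_t$ degree by degree via the eigenvalue relation $k(k+N-2)=n(n+N-2)$, which in a characteristic-zero domain forces $k=n$. Where you genuinely diverge is in the key vanishing lemma needed for the forward direction. The paper (Lemmas \ref{L:qtX} and \ref{L:factor1}) argues elementarily: extract the top-degree homogeneous part of the relation $F(t\cdot X)=(\mbx-c)r$, reducing to showing $(t\cdot X)^n$ is never a nonzero multiple of $\mbx$, which is checked by specializing to two indeterminates and comparing $X_1$-degrees after rewriting $X_1^2$ as $Q-X_2^2$. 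You instead pass to the algebraic closure $\overline{K}$ of the fraction field, invoke the irreducibility of $\mbx-c$ there (valid in all cases except $N=2$, $c=0$, which you correctly treat via the two linear factors $X_1\pm iX_2$), and argue that the image of $t\cdot X$ in the quotient domain is transcendental over $\overline{K}$. Your route is conceptually cleaner and explains \emph{why} the lemma holds — a nonconstant element of the coordinate ring of an irreducible quadric over an algebraically closed field cannot satisfy a nontrivial polynomial relation — but it imports the irreducibility of the quadric as an unproved input, whereas the paper's version stays entirely inside elementary polynomial manipulations over $R$, in keeping with its self-contained algebraic aims. One small point worth making explicit in either treatment: the hypothesis $q\neq 0$ guarantees $L_cq_t\neq 0$ (again by the vanishing lemma), so the degree-$n$ homogeneity asserted in the converse is not vacuous.
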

 Let us note that the equation (\ref{E:dtdifc}) involves $t$ only through $t\cdot t$; thus   the same polynomial $q$ works for all $t\in R^N$ having a fixed value of $t\cdot t$. For example, for $t=(1,0,\ldots, 0)$ and $c=1$ the zonal harmonics are of the form $q(X_1)$, with $q$ satisfying (\ref{E:dtdifc}) with $t\cdot t=c=1$.
 
 Suppose $k\geq 1$ is the degree of $q(Y)$. Then, by focusing on the highest degree term on the left side of (\ref{E:dtdifc}), we see that $k=n$.

   \begin{proof}[Proof of Theorem \ref{P:Lc} ]
We have 
   \begin{equation}\label{E:m2qtx}
   \begin{split}
   &{\mbmn} q(t\cdot X) \\
   &= \sum_{\{j,k\}\in P_2(N)} M_{jk}\left((t_kX_j-t_jX_k)q'(t\cdot X)\right)\\
   &= \sum_{\{j,k\}\in P_2(N)} \left[-(t_jX_j+t_kX_k)q'(t_1X_1+\cdots +t_NX_N) \right. \\
   &\qquad\qquad  \left. +  \left(t_kX_j-t_jX_k)^2q''(t_1X_1+\cdots +t_NX_N) \right)\right]\\
   &=-(N-1)(t\cdot X)q'(t\cdot X)+\left[t^2\mbx-(t\cdot X)^2\right]q''(t\cdot X),
   \end{split}
   \end{equation}
where $t^2 = t\cdot t= \sum_{j=1}^N t_j^2$.   
 
   Since $L_c q_t$ is harmonic we have by (\ref{magicformula}) 
   \begin{equation}\label{E:edrqt}
   \begin{split}
   0 & =\mbx\Delta L_cq_t\\
   &=[(r\partial_r)^2+(N-2)r\partial_r +{\mbmn}]L_cq_t \\
   &=[(r\partial_r)^2+(N-2)r\partial_r ]L_cq_t+ L_c{\mbmn}q_t,
   \end{split}
   \end{equation}
   where we used Proposition \ref{P:Lc}(iii) in commuting $L_c$ and $\mbmn$.
   
   Now suppose that the harmonic polynomial $L_c q_t$ is homogeneous of degree $n$. Then the first term in the last line in (\ref{E:edrqt}) is $n(n+N-2)L_cq_t$. Hence, in this case, (\ref{E:edrqt}) is   equivalent to
   \begin{equation}
   n(n+N-2)q_t+{\mbmn}q_t\in\ker L_c.
   \end{equation}
   By Proposition \ref{P:defLc}(i)  then $ n(n+N-2)q_t+{\mbmn}q_t$  is a polynomial multiple of $\mbx-c$.
 The expression in the last line of (\ref{E:m2qtx}) then shows that
 $$ n(n+N-2)q_t -(N-1)(t\cdot X)q'(t\cdot X) +[t^2c-(t\cdot X)^2] q''(t\cdot X)$$
 is a polynomial multiple of $\mbx-c$.  By Lemma \ref{L:qtX} (proved below) this is only possible if
 $$ n(n+N-2)q_t -(N-1)(t\cdot X)q'(t\cdot X) +[t^2c-(t\cdot X)^2] q''(t\cdot X)=0,$$
 so when $L_cq_t$ is homogeneous of degree $n$ this second-order differential equation is satisfied.
 
 Conversely, if the above second-order differential equation is 
 satisfied then using (\ref{E:edrqt}) and then (\ref{E:m2qtx}) we see that
 \begin{equation}\label{missing}
   \begin{split}
 &[(r\partial_r)^2+(N-2)r\partial_r ]L_cq_t \\
 &=-L_c{\mbmn}q_t  \\
 &=L_c\left[ (N-1)(t\cdot X)q'(t\cdot X)-\left[t^2{\mbx}-(t\cdot X)^2\right]q''(t\cdot X)\right]\\
 &=L_c\left[ (N-1)(t\cdot X)q'(t\cdot X)-\left[t^2c-(t\cdot X)^2\right]q''(t\cdot X)\right]\\
 &=n(n+N-2)L_cq_t.
 \end{split} 
 \end{equation}
 Now if we let $p_k$ be the homogeneous degree-$k$ component of $L_cq_t$, and  compare the degree $k$ terms on both sides of (\ref{missing}), we obtain
 $$k(k+N-2)p_k = n(n+N-2)p_k,$$ 
 and so $k(k+N-2) = n(n+N-2)$, because $R$ is an integral domain of characteristic $0$.
 Thus $k = n$ or $k = -(N-2)-n$ and the second option is impossible since $k \geq 0$.
 \end{proof}

 We turn to two results used in the preceding proof.

\begin{lemma}\label{L:qtX} 
Let $R$ be an integral domain of characteristic $0$.   
Let $q(Y)\in R[Y]$ be a polynomial in one variable and suppose 
for some $c \in R$ and $(t_1,\ldots, t_N) \in R^N$, where $N\geq 2$, with at least one $t_j\neq 0$, that 
$q(t\cdot X)\in R[X_1,\ldots, X_N]$ is divisible by $\mbx-c$ in the ring $R[X_1,\ldots, X_N]$.  Then $q=0$.
\end{lemma}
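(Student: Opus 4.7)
The plan is to pass to the algebraic closure $\overline{K}$ of the field of fractions $K=\mathrm{Frac}(R)$ and arrange for the hypothesis to force $q$ to vanish at infinitely many elements of $\overline{K}$, which for a single-variable polynomial implies $q=0$ in $\overline{K}[Y]$, hence in $R[Y]$ since $R\hookrightarrow \overline{K}$. Writing $q(t\cdot X)=(\mbx-c)h(X)$ with $h\in R[X_1,\ldots,X_N]$, this identity still holds in $\overline{K}[X_1,\ldots,X_N]$, so for every $x=(x_1,\ldots,x_N)\in\overline{K}^N$ with $x_1^2+\cdots+x_N^2=c$ we obtain $q(t\cdot x)=0$. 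It therefore suffices to produce infinitely many distinct values of $t\cdot x$ as $x$ ranges over this ``algebraic sphere''.

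Relabel so that $t_1\neq 0$, and restrict attention to the $2$-dimensional slice $x_3=\cdots=x_N=0$, reducing the problem to showing that $t_1x_1+t_2x_2$ takes infinitely many values on the curve $C\colon x_1^2+x_2^2=c$ in $\overline{K}^2$. When $c\neq 0$, choose $\gamma\in\overline{K}$ with $\gamma^2=c$ and use the rational parametrization $x_1=\gamma(1-u^2)/(1+u^2)$, $x_2=2\gamma u/(1+u^2)$, $u\in\overline{K}$, under which
\[
t\cdot x \;=\; \gamma\,\frac{t_1(1-u^2)+2t_2u}{1+u^2}
\]
is a non-constant rational function of $u$ (the coefficient of $u^2$ in the numerator is $-t_1\gamma\neq 0$), so it takes infinitely many values. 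When $c=0$, the curve $C$ factors as the union of the two lines $x_1=\mp ix_2$ in $\overline{K}^2$, on which $t\cdot x$ restricts to $(t_2\mp it_1)x_2$. Since $t_1\neq 0$, the scalars $t_2+it_1$ and $t_2-it_1$ cannot both vanish (their difference is $-2it_1\neq 0$), so on at least one of these lines $t\cdot x$ is a nonzero $\overline{K}$-multiple of $x_2$ and hence takes infinitely many distinct values. In either case $q$ has infinitely many roots in $\overline{K}$, forcing $q=0$.

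The main obstacle is the case $c=0$, where the sphere degenerates into two isotropic lines over $\overline{K}$, so one must separately verify that the linear form $t\cdot x$ cannot be constant on both components simultaneously; the hypothesis $t_1\neq 0$ is exactly what prevents this. A minor point to keep in mind is that the two chains of inclusions $R\hookrightarrow K\hookrightarrow\overline{K}$ are both injective (by the integral-domain hypothesis), which is what legitimizes moving the identity $q(t\cdot X)=(\mbx-c)h(X)$ into $\overline{K}[X_1,\ldots,X_N]$ and then concluding that the vanishing of $q$ in $\overline{K}[Y]$ implies its vanishing in $R[Y]$.
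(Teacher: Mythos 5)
Your proof is correct, but it takes a genuinely different route from the paper's. The paper argues formally with polynomial identities: writing $q(t\cdot X)=(\mbx-c)r$ and comparing top-degree homogeneous parts reduces the claim to showing that $q_0(t\cdot X)^n$ with $q_0\neq 0$ cannot be divisible by $\mbx$, and that divisibility statement (Lemma \ref{L:factor1}) is then proved by an explicit binomial expansion after specializing to two variables. You instead pass to the algebraic closure $\overline{K}$ of $\operatorname{Frac}(R)$ and count points: $q$ must vanish at every value of $t\cdot x$ as $x$ runs over the quadric $x\cdot x=c$ in $\overline{K}^N$, and the rational parametrization of the conic (resp.\ the two isotropic lines when $c=0$) shows these values are infinite in number, forcing $q=0$. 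Your argument is shorter and more geometric, treats the constant $c$ in one stroke rather than stripping off the leading form first, and correctly isolates the degenerate case $c=0$, where the hypothesis $t_1\neq 0$ together with characteristic $0$ is exactly what prevents $t\cdot x$ from being constant on both lines. The paper's route stays entirely inside formal polynomial algebra, in keeping with its stated philosophy, and produces the reusable Lemma \ref{L:factor1}. One small point to tighten in your write-up: for $c\neq 0$, the nonvanishing of the $u^2$-coefficient of the numerator does not by itself rule out constancy, since the denominator $1+u^2$ also has degree $2$; you should add that if the ratio equalled a constant $\lambda$, then comparing the $u^2$- and $u^0$-coefficients of the numerator with those of $\lambda(1+u^2)$ gives $-t_1\gamma=\lambda=t_1\gamma$, hence $2t_1\gamma=0$, contradicting $t_1\neq 0$ and $\gamma\neq 0$ in characteristic $0$.
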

\begin{proof} Suppose
\begin{equation}\label{qtr}
q(t\cdot X)=(\mbx-c)r 
\end{equation} 
for some non-zero $r\in R[X_1,\ldots, X_N]$. 
Set $n = \deg q(Y)$ and let $q_0Y^n$ be the degree-$n$ term in 
$q(Y)$, with $q_0\neq 0$ in $R$. The degree of $r$ in $R[X_1,\ldots,X_N]$ is necessarily $n-2$ since $\mbx - c$ is monic in each $X_i$. 
Focusing on the degree-$n$ terms on both sides, we have 
\begin{equation}\label{E:tXn}
q_0(t\cdot X)^n ={\mbx}r_{n-2},
\end{equation}
where $r_{n-2}$  is the (necessarily nonzero) term of degree $n-2$ within $r$ and the right side of (\ref{E:tXn}) is nonzero. 
Then we have a contradiction from Lemma \ref{L:factor1} (proved below).
\end{proof}

\begin{lemma}\label{L:factor1}
Let $R$ be an integral domain of characteristic $0$, and $t=(t_1,\ldots, t_N)\in R^N$, with at least one nonzero component.
Let $p(Y)\in R[Y]$ be such that $p(t_1X_1+\cdots +t_NX_N)$ is divisible by $X_1^2+\cdots +X_N^2$ in the polynomial ring $R[X_1,\ldots, X_N]$ and $N \geq 2$. Then $p=0$.

 \end{lemma}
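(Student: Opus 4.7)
The plan is to pass to the algebraic closure $\bar K$ of $\mathrm{Frac}(R)$, where the divisibility relation $p(t\cdot X)=(\mbx)\,q(X)$ persists in $\bar K[X_1,\ldots,X_N]$, and then to exhibit $\bar K$-points at which $\mbx$ vanishes while $t\cdot X$ ranges over infinitely many values. Evaluating the identity at such points will produce infinitely many roots of $p$ in $\bar K$, which forces $p=0$ since $\bar K$ is infinite.

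For the construction, after a permutation of coordinates I would assume without loss of generality that $t_1\neq 0$. Fix $i\in\bar K$ with $i^2=-1$, and for a sign $\varepsilon\in\{\pm 1\}$ consider the line
\[
L_\varepsilon=\{(s,\varepsilon is,0,\ldots,0):s\in\bar K\}\subset\bar K^N.
\]
On $L_\varepsilon$ one has $\mbx=s^2+(\varepsilon is)^2=0$, so evaluating the identity $p(t\cdot X)=(\mbx)\,q(X)$ at a point of $L_\varepsilon$ yields
\[
p\bigl((t_1+\varepsilon i\,t_2)s\bigr)=0\qquad\text{for every }s\in\bar K.
\]

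The remaining step is to choose $\varepsilon$ so that the scalar $t_1+\varepsilon i\,t_2$ is nonzero; then as $s$ ranges over $\bar K$ so does $(t_1+\varepsilon i\,t_2)s$, so $p$ has infinitely many roots in $\bar K$ and must vanish. The only real obstacle is ensuring such a sign is available, and this is exactly where the characteristic-zero hypothesis is used: if both $t_1+it_2$ and $t_1-it_2$ were zero, adding them would give $2t_1=0$ and hence $t_1=0$, contradicting our assumption on $t_1$. Everything else is a mechanical substitution into the divisibility relation after base change to $\bar K$.
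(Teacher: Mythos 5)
Your proof is correct, but it takes a genuinely different route from the paper's. The paper stays entirely inside $R[X_1,\ldots,X_N]$: it specializes $X_j=0$ for $j>2$ to reduce to two variables, then explicitly computes the remainder of the top-degree term $p_0(t_1X_1+t_2X_2)^n$ modulo $X_1^2+X_2^2$ via a binomial expansion (rewriting $X_1^2$ as $Q-X_2^2$) and checks that this remainder is nonzero, forcing $p_0=0$. You instead base-change to the algebraic closure $\overline{K}$ of $\operatorname{Frac}(R)$ and evaluate the identity $p(t\cdot X)=(\mbx)\,q$ along an isotropic line $(s,\varepsilon i s,0,\ldots,0)$ of the quadratic form, so the right side dies and $p$ acquires infinitely many roots in the infinite field $\overline{K}$; the sign $\varepsilon$ is chosen so that $t_1+\varepsilon i t_2\neq 0$, which is exactly where characteristic $\neq 2$ enters. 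Each step of yours is sound: divisibility persists under the embedding $R\hookrightarrow\overline{K}[X_1,\ldots,X_N]$, permuting coordinates preserves $\mbx$ so you may assume $t_1\neq 0$, and vanishing of $p$ over $\overline{K}$ pulls back to $R[Y]$. Your argument is shorter and more conceptual (it makes visible that the real content is the existence of isotropic vectors for $\mbx$ over $\overline{K}$), and it works verbatim in any characteristic other than $2$; what it costs is the passage to a field extension, whereas the paper's computation is self-contained over $R$ and, as a byproduct, exhibits the explicit nonzero remainder of $(t\cdot X)^n$ modulo $\mbx$ in two variables.
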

 \begin{proof}  Any nonzero polynomial multiple of $Q=X_1^2+\cdots +X_N^2$ is of degree at least $2$ in each $X_j$.  Suppose that
 \begin{equation}\label{E:pQb}
 p(t_1X_1+\cdots +t_NX_N)=Qf,
 \end{equation}
 where  $f\in R[X_1,\ldots, X_N]$. Evaluating at $X_j=0$ for all $j>2$, we can focus on the case of just two indeterminates
 \begin{equation}\label{E:pQf}
 p(t_1X_1+t_2X_2)=(X_1^2+X_2^2)f(X_1,X_2).
 \end{equation}
 Let $n = \deg p(Y)$ and $p_0Y^n$ be the degree-$n$ term in $p(Y)$; 
 so $p_0\in R$ is non-zero and $n\geq 0$. 
 
 We look at the degree-$n$ terms on both sides of (\ref{E:pQf}). 
 On the left side of (\ref{E:pQf}) we get $p_0(t_1X_1+t_2X_2)^n$.
 \begin{equation}
 \begin{split}
 &(t_1X_1+t_2X_2)^n \\
 &=(t_2X_2)^n+n(t_2X_2)^{n-1}t_1X_1 + \sum_{j=1}^{[n/2]} \binom{n}{2j}t_1^{2j}(X_1^2)^{j}(t_2X_2)^{n-2j} \\
 &\qquad+X_1\sum_{j=1}^{[(n-1)/2]} \binom{n}{2j+1}t_1^{2j+1}(X_1^2)^{j}(t_2X_2)^{n-(2j+1)}.
 \end{split}
 \end{equation}
 Upon writing $X_1^2$ as $Q-X_2^2$, we have
 \begin{equation}
 \begin{split}
 &(t_1X_1+t_2X_2)^n \\
  &=(t_2X_2)^n+n(t_2X_2)^{n-1}t_1X_1 + \sum_{j=1}^{[n/2]} \binom{n}{2j}t_1^{2j}(Q-X_2^2)^{j}(t_2X_2)^{n-2j} \\
 &\qquad+X_1\sum_{j=1}^{[(n-1)/2]} \binom{n}{2j+1}t_1^{2j+1}(Q-X_2^2)^{j}(t_2X_2)^{n-(2j+1)}\\
 &=a X_1+b +c Q,
 \end{split}
 \end{equation}
 where $a,b\in R[X_2]$ and $c \in R[X_1,X_2]$; specifically,
 \begin{equation}\label{E:defbn}
 \begin{split}
 a&= \sum_{j=0}^{[(n-1)/2]} \binom{n}{2j+1}(-1)^jt_1^{2j+1}    t_2^{n-(2j+1)} X_2^{n-1}    \\
 b  &= \sum_{j=0}^{[n/2]}\binom{n}{2j}(-1)^jt_1^{2j}t_2^{n-2j}X_2^{n}.
 \end{split} 
 \end{equation}
  Hence,
 \begin{equation}
 (t_1X_1+t_2X_2)^np_0=  p_0aX_1+ p_0b+ cp_0Q.
 \end{equation}
 Since the left side is a multiple of $Q$ as in (\ref{E:pQf}), we have
 \begin{equation}
( f- p_0c)Q=p_0aX_1+p_0 b.
\end{equation}
The left side is either $0$ or of degree $\geq 2$ in $X_1$, whereas the right side is either $0$ or of degree $1$ in $X_1$. Thus   both sides must be $0$. But neither $a$ nor $b$ is $0$, and so $p_0=0$, a contradiction.
\end{proof}

The solutions to equation (\ref{E:dtdifc}) are traditionally understood in terms of Gegenbauer polynomials. We conclude with the following observation.

 \begin{prop}\label{P:solGegen} Suppose $R$ is an integral domain containing $\mbq$,  $\alpha\in R$,   and $q(Y)\in R[Y]$ a nonzero polynomial that satisfies the differential equation
 \begin{equation}\label{E:dtdifc2}
  [\alpha-Y^2] q''(Y) -(N-1)Yq'(Y) +n(n+N-2)q(Y) =0,
\end{equation}
where $n\geq 0$ and $N\geq 3$ are integers.
Then the degree of $q$ is $n$.  If $\alpha=0$ then $q(Y)=Y^n$ is, up to multiplication by any element of $R$, the unique solution to (\ref{E:dtdifc2}). If $\alpha\neq 0$   then a   solution $q(Y)$, unique up to scaling by any element of $R$,  to $(\ref{E:dtdifc2})$ exists, with coefficients in $R[\alpha^{-1}]$, and is an even polynomial if $n$ is even and an odd polynomial of $n$ is odd.
 \end{prop}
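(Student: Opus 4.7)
The plan is to reduce the ODE $(\ref{E:dtdifc2})$ to a two-term recurrence on the coefficients and analyze it directly. Writing $q(Y)=\sum_{k\geq 0}c_kY^k$ with only finitely many nonzero $c_k\in R$, and comparing coefficients of $Y^j$ on both sides of $(\ref{E:dtdifc2})$, the identity
$$
n(n+N-2)-j(j+N-2)=(n-j)(n+j+N-2)
$$
collapses the coefficient relations to
$$
\alpha(j+1)(j+2)\,c_{j+2}=(j-n)(j+n+N-2)\,c_j,\qquad j\geq 0.
$$
Setting $j=d:=\deg q$, the left side is zero because $c_{d+2}=0$; since $c_d\neq 0$ and $R$ is an integral domain of characteristic zero with $d+n+N-2\geq N-2\geq 1$ a positive integer embedded in $R$, this forces $d=n$.

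If $\alpha=0$ the recurrence reads $(j-n)(j+n+N-2)c_j=0$ for every $j$, and for $j\neq n$ the integer $(j-n)(j+n+N-2)$ is nonzero in $R$, so $c_j=0$. Thus $q(Y)=c_nY^n$, as claimed.

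If $\alpha\neq 0$, I would invert the recurrence in $R[\alpha^{-1}]$ as
$$
c_{j+2}=\frac{(j-n)(j+n+N-2)}{\alpha(j+1)(j+2)}\,c_j,
$$
so that the even- and odd-indexed coefficient chains evolve independently. Assume $n$ is even (the odd case is identical after exchanging the roles of $c_0$ and $c_1$). The even chain starting at any $c_0$ lies in $R[\alpha^{-1}]$ and truncates at $c_{n+2}=0$ because of the $(n-n)$ factor, so $c_{n+2k}=0$ for all $k\geq 1$. Along the odd chain, however, every multiplier $(j-n)(j+n+N-2)$ with $j$ odd is a nonzero integer; since $R$ is an integral domain, a nonzero $c_1$ would propagate nonzeroness to $c_3,c_5,\ldots$ indefinitely, contradicting that $q$ is a polynomial. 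Hence $c_1=0$ and $q$ is an even polynomial. Existence comes from setting $c_0=1$ (or $c_1=1$ if $n$ is odd) and running the recurrence forward; uniqueness up to scaling follows because any two nonzero solutions share the same parity and degree, and matching the single free leading parameter then expresses one as a scalar multiple of the other via the recurrence.

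I expect the most delicate step to be the parity argument: confirming that the wrong-parity chain must begin with zero relies on $(j-n)(j+n+N-2)$ being a nonzero element of $R$ for every $j$ of the opposite parity from $n$, which is precisely where the characteristic-zero and integral-domain hypotheses on $R$ are used (together with $N\geq 3$, which secures $j+n+N-2\geq 1$). Everything else is essentially bookkeeping around the single recurrence.
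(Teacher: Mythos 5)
Your proposal is correct and follows essentially the same route as the paper: both derive the two-term recurrence $\alpha(j+1)(j+2)c_{j+2}=(j-n)(j+n+N-2)c_j$ by comparing coefficients, then use the parity/propagation argument (relying on $N\geq 3$ and the integral-domain, characteristic-zero hypotheses) to kill the wrong-parity chain and truncate the right-parity chain at degree $n$. Your explicit treatment of the degree claim and of existence/uniqueness is slightly more detailed than the paper's, but it is the same argument.
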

 \begin{proof} Substituting
 $$q(Y)=\sum_{k=0}^mq_kY^k$$
 into (\ref{E:dtdifc2}), and examining the coefficient of $Y^k$,   (\ref{E:dtdifc2}) is equivalent to
 \begin{equation}\label{E:diffeqqk}
 \begin{split}
 {\alpha} q_{k+2} &= \frac{k(k-1)+k(N-1)-n(n+N-2)}{(k+2)(k+1)}q_k\\
 &=\frac{(k-n)(k+n+N-2)}{(k+2)(k+1)}q_k,
 \end{split}
 \end{equation}
 holding for all $k\in\{0,1,2,\ldots\}$.  The condition $N\geq 3$ implies that $k+n+N-2\neq 0$ since $k$ and $n$ are non-negative integers.
 
 If $\alpha=0$ then $q_k=0$ unless $k=n$.  Now suppose $\alpha\neq 0$.  If $n$ is even and $q_1$ were nonzero then (\ref{E:diffeqqk}) would imply that $q_k$ is nonzero for all odd $k$; this would contradict the fact that $q(Y)$ is a polynomial. Thus $q(Y)$ is an even polynomial. Moreover, (\ref{E:diffeqqk}) also implies that $q_m=0$ for all even $m>n$. A similar argument works for $n$ odd.
 \end{proof}

{\bf Acknowledgments}. We thank   Brian Hall, Arthur Parzygnat, Amy Peterson, and Tom Roby for discussions on this topic. Special thanks to Darij Grinberg for detailed and very useful comments and corrections to an earlier version of this paper.

 \bibliographystyle{amsplain}
 \bibliography{Harmony}{}
\bibliographystyle{plain}

\end{document}